\newcommand{\Z}{\mathbb{Z}}
\newcommand{\R}{\mathbb{R}}
\newcommand{\C}{\mathbb{C}}
\newcommand{\F}{\mathbb{F}}
\newcommand{\Sc}{\mathcal{S}}
\renewcommand{\AA}{\mathcal{A}}
\newcommand{\FF}{\mathcal{F}}
\newcommand{\RR}{\mathscr{R}}
\newcommand{\EE}{\mathscr{E}}
\newcommand{\GL}{\mathrm{GL}}
\newcommand{\SL}{\mathrm{SL}}
\newcommand{\SO}{\mathrm{SO}}
\newcommand{\Sp}{\mathrm{Sp}}
\newcommand{\Cusp}{\mathscr{C}}
\newcommand{\Rep}{\mathrm{Rep}}
\newcommand{\Irr}{\mathrm{Irr}}
\newcommand{\unit}{\mathrm{unit}}
\newcommand{\temp}{\mathrm{temp}}
\newcommand{\Ind}{\mathrm{Ind}}
\newcommand{\Jac}{\mathrm{Jac}}
\newcommand{\supp}{\mathrm{supp}}
\newcommand{\sgn}{\mathrm{sgn}}
\newcommand{\soc}{\mathrm{soc}}
\newcommand{\ev}{\mathrm{ev}}
\newcommand{\Res}{\mathrm{Res}}
\newcommand{\FRP}{\mathrm{FRP}}
\newcommand{\Ad}{\mathrm{Ad}}
\newcommand{\good}{\mathrm{good}}
\newcommand{\bad}{\mathrm{bad}}
\newcommand{\iif}{&\quad&\text{if }}
\newcommand{\other}{&\quad&\text{otherwise}}
\newcommand{\resp}{resp.~}
\renewcommand{\1}{\mathbf{1}}
\newcommand{\ep}{\varepsilon}
\newcommand{\mm}{\mathfrak{m}}
\newcommand{\tl}[1]{\widetilde{#1}}
\newcommand{\half}[1]{\frac{#1}{2}}
\newcommand{\pair}[1]{\langle #1 \rangle}
\theoremstyle{plain}
\newtheorem{thm}{Theorem}[section]
\newtheorem{lem}[thm]{Lemma}
\newtheorem{prop}[thm]{Proposition}
\newtheorem{cor}[thm]{Corollary}
\newtheorem{qu}[thm]{Question}
\theoremstyle{definition}
\newtheorem{defi}[thm]{Definition}
\newtheorem{rem}[thm]{Remark}
\newenvironment{talign*}
{\csname align*\endcsname}
{\endalign}
\title[
Unitary dual of $\SO_{2n+1}$ and $\Sp_{2n}$:
The good parity case (and slightly beyond)
]{
Unitary dual of $p$-adic split $\SO_{2n+1}$ and $\Sp_{2n}$:
\\ The good parity case (and slightly beyond)
}
\author{Hiraku Atobe \and Alberto M\'inguez}
\date{}
\subjclass[2010]{
Primary 22D10; 
Secondary 22E50, 
11S37 
}
\keywords{Unitary dual; Arthur type representations; Derivatives}
\address{
Department of Mathematics, Kyoto University, 
Kitashirakawa-Oiwake-cho, Sakyo-ku, Kyoto, 606-8502, Japan
}
\email{
atobe@math.kyoto-u.ac.jp
}
\address{
Fakult{\"a}t f{\"u}r Mathematik, University of Vienna, Oskar-Morgenstern-Platz 1, 1090 Wien, Austria
}
\email{
alberto.minguez@univie.ac.at
}
\begin{document}
\maketitle

\begin{abstract}
Let $F$ be a $p$-adic field, 
and let $G$ be either the split special orthogonal group $\SO_{2n+1}(F)$ 
or the symplectic group $\Sp_{2n}(F)$, with $n \geq 0$. 
We prove that a smooth irreducible representation of good parity of $G$ is unitary 
if and only if it is of Arthur type. 
Combined with the algorithms of \cite{At-Is_Arthur, HLL} for detecting Arthur type representations, 
our result leads to an explicit algorithm for checking 
the unitarity of any given irreducible representation of good parity. 
Finally, we determine the set of unitary representations 
that may appear as local components of the discrete automorphic spectrum.
\end{abstract}

\section{Introduction}
The classification of irreducible unitary representations of classical groups over local fields 
stands as a central problem in the representation theory of reductive groups. 
This program traces its origins to I. Gel'fand's work in the 1940s and 1950s on harmonic analysis 
and representations of locally compact groups. 
For general linear groups, 
the classification was completed by D. Vogan \cite{V} for real groups 
and by M. Tadi\'c \cite{T-0} for $p$-adic groups in the 1980s. 
For inner forms of general linear $p$-adic groups, 
the classification was achieved through the combined work of many mathematicians, 
see in particular \cite{T-1,Sec, Bad, BR, BHLS}. 
In the case of classical groups over a $p$-adic field, only partial results are known: 
namely, for generic and unramified representations \cite{LMT, MT}, 
and for groups of rank less than or equal to three \cite{T-rank3}. 
For recent progress in the case of real groups, see \cite{ALTV}. 
The unitary dual carries a natural topology, 
and one of the main challenges for classical groups is the large number of isolated representations 
(see \cite{T-Arthur}).
\vskip 10pt

Let $F$ be a non-archimedean local field. 
For $\GL_n(F)$ and its inner forms, 
Tadi\'c showed that an irreducible representation is unitary 
if and only if it is parabolically induced from certain representations 
called essentially \emph{Speh representations}.  
Note that unitary Speh representations arise as local components of discrete automorphic representations. 
A simplification of Tadi\'c's proof was achieved in \cite{LM}, 
where one of the key steps was to avoid the use of a theorem of J. Bernstein \cite{Ber} 
that played a central role in the original proof. 
Bernstein's method relies on special properties of the mirabolic subgroup of $\GL_n(F)$ 
that do not extend to other groups, 
and thus the extension to inner forms of $\GL_n(F)$ required significantly heavier machinery, 
just to mention some \cite{BK, BM, DKV, Sec2, Sec3, Sec4, SS}. 
In contrast, the approach in \cite{LM} relies only on elementary combinatorics 
and a careful analysis of Jacquet modules, 
what are now called \emph{$\rho$-derivatives} (see Section~\ref{DandS}), 
to control the irreducibility of the socle of parabolically induced representations.
\vskip 10pt

One of the most remarkable achievements in recent number theory is 
J. Arthur's classification \cite{Ar} of square-integrable automorphic representations 
of quasi-split symplectic and orthogonal groups. 
This breakthrough, accomplished through the collective effort of many mathematicians, 
relies on the twisted trace formula and an intricate inductive procedure, known as endoscopy. 
Arthur introduced what are now called \emph{$A$-parameters},
which are modifications of $L$-parameters designed 
to capture the non-tempered behavior of local components of discrete automorphic representations, 
and in particular, to account for the failure of the naive Ramanujan conjecture beyond $\GL_N$. 
In this paper, these local components are referred to as representations \emph{of Arthur type}.
They are unitary representations (\cite[Theorem 2.2.1]{Ar}) 
and they generalize the unitary Speh representations discussed above to the setting of classical groups.
\vskip 10pt

Representations of Arthur type were classified by C. M{\oe}glin \cite{Moe1, Moe2, Moe3, Moe4, Moe5} 
by purely local methods (see also \cite{X2}). 
Let $G$ be a symplectic or split special odd orthogonal group 
over a non-archimedean local field $F$ of characteristic zero. 
Given an $A$-parameter $\psi$ for $G$, one can decompose it as
\[
\psi = \psi_{\bad}^- \oplus \psi_{\good} \oplus \psi_{\bad}^+,
\]
where $\psi_{\good}$ is a sum of irreducible self-dual representations of the same type as $\psi$, 
and $\psi_{\bad}^- = (\psi_\bad^+)^\vee$ is a sum of irreducible representations of a different type 
(see Section \ref{sec.classical} for more details). 
The parameter $\psi$ is said to be of \emph{good parity} if $\psi_{\bad}^- = 0$.
Similarly, one can define this notion for irreducible representations of $G(F)$. 
\par

M{\oe}glin \cite{Moe1} showed that the representations associated to $\psi$ 
are irreducible parabolic inductions of the representations attached to $\psi_{\good}$, 
together with products of unitary Speh representations corresponding to $\psi_{\bad}^-$. 
This reduces the classification of Arthur type representations to the case of parameters of good parity. 
In \cite{Moe2,Moe3,Moe4,Moe5}, 
she ultimately classified these representations using a combination of techniques, 
including the analysis of reducibility in parabolic induction, 
the use of Jacquet functors, and what she called the \emph{partial Aubert involution}.
\vskip 10pt

M{\oe}glin's classification was later simplified by the first-named author in the paper \cite{At-const}, 
who used $\rho$-derivatives to reduce the construction of representations of Arthur type of good parity 
to those with non-negative discrete diagonal restriction (which are easy to determine). 
This naturally led to the question of whether the idea in \cite{LM} 
could be adapted to classify unitary representations of classical groups. 
In this paper, we carry out this program for unitary representations of good parity of $G(F)$, 
where $G$ is a symplectic or split special odd orthogonal group 
over a non-archimedean local field $F$ of characteristic zero. 
Our main theorem is as follows:

\begin{thm}\label{MAINTHM}
Let $\pi$ be an irreducible representation of $G(F)$ of good parity. 
Then $\pi$ is unitary if and only if it is of Arthur type.
\end{thm}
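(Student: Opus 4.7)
The ``if'' direction of Theorem~\ref{MAINTHM} is \cite[Theorem~2.2.1]{Ar}: any representation of Arthur type is unitary. The substance of the theorem therefore lies in the converse implication, and our plan is to adapt the strategy of \cite{LM} from $\GL_n$ to the classical group setting, with the construction of \cite{At-const} playing the structural role that Speh representations play in the $\GL_n$ story. We argue by induction on the rank $n$.

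Throughout, the soft input we rely on is that for an irreducible unitary representation $\pi$, the semisimple Jacquet module is a direct sum of unitary representations; consequently every $\rho$-derivative $D_{\rho|\cdot|^x}(\pi)$, when nonzero, decomposes into unitary irreducible summands of strictly smaller rank, and the induction hypothesis applies to each. The hard input we exploit is the following dictionary from \cite{At-const}: any Arthur type representation of good parity is obtained from one with non-negative discrete diagonal restriction (NDDR) by a finite sequence of socle operations $\pi' \mapsto \soc(\Delta \rtimes \pi')$ for specific segment-type representations $\Delta$ of general linear groups, and each such socle operation is inverted by a suitable $\rho$-derivative.

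With this dictionary in hand, the plan unfolds in three stages. First, starting from a unitary $\pi$ of good parity, I would apply a maximal sequence of nontrivial $\rho$-derivatives to reach an irreducible unitary representation $\pi_0$ of good parity whose ``extremal'' derivatives all vanish. Second, I would show that such a minimal $\pi_0$ is necessarily of Arthur type, in fact of NDDR form; this is where one must match the combinatorial constraints imposed by the vanishing of extremal derivatives against the analytic constraints of unitarity, such as complementary series bounds on the Langlands exponents. Third, I would reconstruct $\pi$ from $\pi_0$ via the inverse socle operations $\pi' \mapsto \soc(\Delta \rtimes \pi')$ and verify at each step, using \cite{At-const}, that the intermediate representation is of Arthur type (and hence unitary by the ``if'' direction), so that the process remains inside the unitary dual and terminates at $\pi$ itself.

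The principal obstacle is twofold. In Stage~2, identifying the ``minimal'' unitary representations requires ruling out configurations in which the absence of extremal derivatives coexists with non-unitary exponents --- a case analysis driven by the parity and the relative position of the segments supporting $\pi_0$. In Stage~3, the core technical ingredient is a socle irreducibility statement for the parabolic inductions $\Delta \rtimes \pi'$ appearing along the reconstruction, the classical-group analog of the key combinatorial lemma of \cite{LM}. Establishing this irreducibility will rely on the M{\oe}glin--Tadi\'c formula for Jacquet modules together with a careful use of the Aubert--Zelevinsky involution, and is expected to be the main technical heart of the proof.
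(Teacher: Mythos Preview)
Your proposal rests on a false premise. You assert that ``the semisimple Jacquet module is a direct sum of unitary representations; consequently every $\rho$-derivative $D_{\rho|\cdot|^x}(\pi)$, when nonzero, decomposes into unitary irreducible summands.'' This is not true: Jacquet functors do not preserve unitarity, even after semisimplification. Already for $\mathrm{PGL}_2(F)$ the Steinberg representation is unitary but its Jacquet module is the non-unitary character $|\cdot|^{1/2}$. The Bernstein-type statement you have in mind (highest derivatives of unitary representations of $\GL_n$ are unitary) depends on the mirabolic subgroup and has no analogue for classical groups; indeed the introduction of the paper explicitly flags this as the reason the $\GL_n$ methods do not transfer directly. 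Since your entire induction is fed by the assumption that the intermediate $\pi_j$'s obtained by taking derivatives remain unitary, the argument collapses at Stage~1.

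The paper's proof runs the functoriality in the opposite direction. What \emph{is} true is that parabolic induction preserves unitarity: if $\sigma$ is unitary and $\pi$ is unitary then $\sigma \rtimes \pi$ is unitary, hence semisimple. The paper exploits this by inducing the original unitary $\pi$ by a well-chosen unitary Speh representation $S$; semisimplicity of $S \rtimes \pi$ then forces a common irreducible subrepresentation between $S \rtimes \pi_{j+1}$ and $\soc(\tau \times S) \rtimes \pi_j$ (Lemma~\ref{pi'}), and it is this constraint, not unitarity of $\pi_j$, that drives the induction. Note also that your Stage~2 misidentifies where unitarity enters: in the paper the base case (Theorem~\ref{initial}) is a purely structural statement, valid for any good-parity $\pi_0$ with the prescribed derivative vanishing, with no unitarity hypothesis. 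Unitarity of $\pi$ is used only in the inductive step, and only through unitary induction.
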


In fact, we prove a stronger result that will be useful for the eventual classification of the full unitary dual. 
Namely, we show that the good-parity part of any irreducible unitary representation is always of Arthur type.
See Theorem \ref{unitary_gp} for the details. 
\vskip 10pt

Our result proves a conjecture of Tadi\'c, 
refined in \cite[Conjecture 1.2]{HJLLZ}, 
and establishes a deep connection between unitary representations 
and local components of discrete automorphic representations. 
In these works, it was also conjectured (see \cite[Conjecture 1.1]{T-Arthur}) that 
all isolated unitary representations should be of good parity. 
We therefore view our theorem as a meaningful step 
toward the full classification of the unitary dual of classical groups. 
Moreover, since algorithms were developed in \cite{At-Is_Arthur, HLL} 
to determine whether a given irreducible representation of good parity is of Arthur type, 
our result shows that unitarity can be checked effectively in this case.
\vskip 10pt

We now outline the main steps behind the proof of Theorem \ref{MAINTHM}. 
One direction of the theorem follows from Arthur's work: 
every representation of Arthur type is known to be unitary (\cite[Theorem 2.2.1]{Ar}). 
For the converse, we begin by introducing some notation. 
Let $G_n$ denote either the split special orthogonal group $\SO_{2n+1}(F)$ 
or the symplectic group $\Sp_{2n}(F)$ of rank $n$. 
If $\pi$ (\resp $\tau_i$) is a smooth representation of $G_{n_0}$ (\resp $\GL_{d_i}(F)$), 
with $d_1 + \cdots + d_r + n_0 = n$, 
it is customary to denote by
\[
\tau_r \times \dots \times \tau_1 \rtimes \pi 
\]
the normalized parabolically induced representation from the standard parabolic subgroup $P$ of $G_n$ 
with Levi subgroup isomorphic to $\GL_{d_r}(F) \times \dots \times \GL_{d_1}(F) \times G_{n_0}$, 
induced from $\tau_r \boxtimes \dots \boxtimes \tau_1 \boxtimes \pi$.
\par

Let now $\pi$ be an irreducible unitary representation of good parity. 
A key input we use is that for any irreducible unitary representation $\sigma$ of a general linear group, 
the parabolically induced representation
\[
\sigma \rtimes \pi
\]
is also unitary and admissible, hence semisimple. 
By choosing $\sigma$ appropriately 
and using the explicit nature of the classification in \cite{At-const, At-Is_Arthur}, 
we are able to deduce that $\pi$ must be of Arthur type.
\vskip 10pt

Even though the underlying idea is conceptually simple, 
its implementation is technically involved. 
For any irreducible representation $\pi$ of $G_n$ of good parity, 
we define its \emph{SZ-decomposition} (see Section \ref{sec.SZ}): 
this consists of a natural sequence of irreducible representations $\tau_i$ of $\GL_{d_i}(F)$ 
(for $1 \leq i \leq r$),
each induced from $k_i$ copies of a segment-type representation, 
and a representation $\pi_0$ of $G_{n_0}$, with $d_1 + \cdots + d_r + n_0 = n$, 
such that $\pi$ is the socle of the parabolically induced representation
\[
\tau_r \times \dots \times \tau_1 \rtimes \pi_0.
\]
The representation $\pi_0$ has a specific constraint on its $\rho$-derivatives.
Roughly speaking, it has almost no nonzero derivatives. 
In Theorem~\ref{initial}, 
we prove that any irreducible representation $\pi_0$ with this property is of Arthur type. 
\par

Let $\pi_i$ denote the (irreducible) socle of $\tau_i \times \dots \times \tau_1 \rtimes \pi_0$. 
We then prove the following:

\begin{thm}[Theorem \ref{inductive}]\label{MAINTHM2}
Let $\pi$ be an irreducible representation of $G_n$ of good parity. 
If $\pi$ is unitary and $\pi_{i-1}$ is of Arthur type, 
then $\pi_{i}$ is also of Arthur type. 
\end{thm}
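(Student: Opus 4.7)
My plan is an induction on $i$, with base case $\pi_0$ of Arthur type by Theorem \ref{initial}. The first observation, which is essentially free, is that each intermediate socle $\pi_i$ is already unitary: since $\pi_{i-1}$ is of Arthur type it is unitary by \cite[Theorem 2.2.1]{Ar}, and $\tau_i$ --- being induced from $k_i$ copies of a single segment-type representation --- is a Speh-type unitary representation of a general linear group. Parabolic induction of unitaries is unitary, so $\tau_i \rtimes \pi_{i-1}$ is unitary and hence completely reducible; in particular the socle $\pi_i$ is a direct summand. Thus we may freely use the unitarity of $\pi_i$ throughout the rest of the argument.

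Next, I would try to identify $\pi_i$ with an Arthur type representation directly. When the segment underlying $\tau_i$ is ``compatible'' with the A-parameter $\psi_{i-1}$ of $\pi_{i-1}$ in the sense of M{\oe}glin --- meaning that the corresponding Jordan block can be inserted into $\psi_{i-1}$ without obstruction --- the socle $\soc(\tau_i \rtimes \pi_{i-1})$ is described explicitly in \cite{Moe2,Moe3,Moe4,Moe5} (and revisited in \cite{At-const, At-Is_Arthur}) as the Arthur type representation attached to the enlarged parameter. In this compatible case the inductive step is immediate, so the genuinely new content of the theorem is confined to the incompatible situations.

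To handle incompatibility, I would rely on the $\rho$-derivative calculus developed in \cite{At-const}: via Leibniz-type formulas the $\rho$-derivatives of $\pi_i$ can be expressed in terms of those of $\pi_{i-1}$ and of the segment data of $\tau_i$, and Arthur type can be detected from this derivative profile by the algorithm of \cite{At-Is_Arthur}. A hypothetical non-Arthur unitary $\pi_i$ would, when propagated up the chain $\pi_i \to \pi_{i+1} \to \dots \to \pi_r = \pi$ by the same Leibniz analysis, force either a non-unitary irreducible constituent inside $\tau_r \times \dots \times \tau_1 \rtimes \pi_0$ (contradicting complete reducibility of this unitarily induced representation) or a mismatch with the prescribed SZ-decomposition of $\pi$. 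I expect the main obstacle to lie precisely in this last step: carefully tracking the derivative data and the M{\oe}glin parameter modifications along the entire SZ-chain, and using the rigidity imposed by unitarity together with the explicit classifications of \cite{At-const, At-Is_Arthur} to exclude non-Arthur branches at each stage.
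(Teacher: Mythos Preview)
Your first paragraph contains a fatal error: the representations $\tau_i$ appearing in the SZ-decomposition are \emph{not} unitary. By construction (see Section~\ref{sec.SZ}), each $\tau_j^- = \Delta_{\rho_j}[x_j,y_j]^{k_j}$ has $x_j+y_j<0$, and each $\tau_j^+ = Z_{\rho'_j}[-x'_j,-y'_j]^{k'_j}$ has both endpoints $\geq 1$; in neither case is the underlying segment representation even hermitian. They are products of copies of a single \emph{essentially} segment representation, not unitary Speh representations. Consequently $\tau_i \rtimes \pi_{i-1}$ is not a unitary induction, it need not be semisimple, and you cannot conclude that $\pi_i$ is unitary. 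In fact the paper never asserts that the intermediate $\pi_i$ are unitary; this is not known a priori and, if it were, would already be very close to the main theorem by circularity (``unitary of good parity $\Rightarrow$ Arthur type'' is exactly what is being proved).

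The remainder of your proposal is too vague to assess, but more importantly it rests on having unitarity of $\pi_i$ available, which you do not. The paper's actual mechanism is quite different and more delicate: one only ever uses the unitarity of the \emph{top} representation $\pi$. The key move (Lemma~\ref{pi'}) is to choose a genuinely unitary Speh representation $S$ adapted to $\tau_i$, so that $S \rtimes \pi$ is unitary hence semisimple; one then chases a carefully selected irreducible constituent down through the entire SZ-chain via Frobenius reciprocity and control of Jacquet modules, producing a common irreducible subrepresentation of $S \rtimes \pi_i$ and $\soc(\tau_i \times S) \rtimes \pi_{i-1}$. It is this compatibility constraint---not unitarity of $\pi_i$---that, combined with a further auxiliary Speh factor $S'$ and the explicit M{\oe}glin construction (Lemma~\ref{geometric}, Proposition~\ref{changingE}), forces $\pi_i$ to be of Arthur type. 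Your sketch misses this entire structure.
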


As a result, Theorem \ref{MAINTHM2} implies Theorem~\ref{MAINTHM}.
\vskip 10pt

Finally, we give some details about the proof of Theorem \ref{MAINTHM2}. 
Given $\tau_i$, we construct $S$, a product of $k_i$ copies of a unitary Speh representation, 
such that the socle of $\tau_i \times S$ is a product of $k_i$ copies of essentially Speh representations. 
We show that the unitarity of $\pi$ implies that  
\[
S \rtimes \pi_i
\quad\text{and}\quad
\soc(\tau_i \times S) \rtimes \pi_{i-1}
\]  
share a common irreducible subrepresentation. 
This imposes a strong constraint, which we exploit in the following step.
\par

Let $S'$ be a product of $k_i$ copies of ``long'' essentially Speh representations 
such that the socle of $S' \times \tau_i \times S$ is itself 
a product of $k_i$ copies of essentially Speh representations. 
We prove that the $S$-derivative (see Section \ref{sec.inequalities}) of  
\[
\soc\left( \soc(S' \times \tau_i \times S) \rtimes \pi_{i-1} \right)
\]  
contains the socle of $S' \rtimes \pi_i$ (Lemma \ref{geometric}). 
By choosing $S'$ appropriately, 
and using M{\oe}glin's construction, 
we obtain some information about the Langlands data for $\soc( S' \rtimes \pi_i)$.
Using it together with the algorithm in \cite{At-Is_Arthur}, 
we deduce that $\pi_i$ is of Arthur type. 
See Section \ref{sec.proof_inductive}.
\vskip 10pt

In the final two sections of this article, we explore what lies beyond the good parity case.
The examples of generic and unramified unitary representations suggest that the methods developed here
are insufficient to fully address the general case,
and that additional analytic tools will likely be necessary
(see Section~\ref{sec.beyond} for further discussion).
However, for global applications, 
a particularly interesting set that extends slightly beyond the good parity case
is that of representations attached to parameters which might be localizations of global $A$-parameters.
Namely, we will consider 
an irreducible representation of the form 
\[
\pi = S_1|\cdot|^{x_1} \times \dots \times S_r|\cdot|^{x_r} \rtimes \pi_0, 
\]
where $\pi_0$ is of Arthur type of good parity, $S_i$ is a unitary Speh representation, 
and $0 \leq x_i < \half{1}$ for $1 \leq i \leq r$.
It is well known that not all such representations are unitary
(see \cite[Example 5.1(1)]{HJLLZ}).
In Theorem~\ref{weak}, we determine which of them are unitary, confirming \cite[Conjecture 5.9]{HJLLZ}.
\par

Let us now say a few words about our restriction to symplectic and split odd orthogonal groups. 
We focus on these cases because the classification in \cite{At-const} is currently limited to them. 
However, with the extension of the results in \cite{At-Jac} to all quasi-split classical groups 
(see \cite[Appendix C]{AGIKMS}), 
we expect that our methods can be adapted to this broader setting. 
In particular, we conjecture that all unitary representations of good parity 
are of Arthur type for all quasi-split classical groups. 
It would also be interesting to investigate whether an analogue of our result holds for real groups.
\par

\subsection*{Acknowledgement}
We gratefully acknowledge the hospitality of Kyoto University during A. M\'inguez's visit in April 2025,
where the last part of this work was developed. 
We would like to thank Erez Lapid and Marko Tadi\'c for useful discussions. 
We also thank Max Gurevich for providing a counterexample to a statement 
we initially believed to be true (see Remark \ref{Max}). Finally, we thank Alexander Stadler for his help in the proof of Theorem \ref{weak}.
H. Atobe was partially supported by JSPS KAKENHI Grant Number 23K12946.
A. M\'inguez was partially funded by 
the Principal Investigator project PAT-4832423 of the Austrian Science Fund (FWF).

\section{Preliminaries}
Throughout this paper, $F$ denotes a non-archimedean local field of characteristic zero. 
In this section, we recall some basic results from representation theory of $p$-adic groups.

\subsection{Derivatives and socles}\label{DandS}
Fix $G \in \{\GL_n(F), \SO_{2n+1}(F), \Sp_{2n}(F)\}$, assumed to be split over $F$.
Let $\Rep(G)$ denote the category of smooth representations of $G$ of finite length. 
A representation $\Pi \in \Rep(G)$ is called \emph{unitary} 
if it admits a positive-definite $G$-invariant hermitian form. 
Note that every unitary representation is semisimple since it is of finite length and hence admissible.
\par

We write $\Irr(G)$ for the set of irreducible admissible representations of $G$, 
and denote by $\Irr_\unit(G)$ and $\Irr_\temp(G)$ 
the subsets consisting of unitary and tempered representations, respectively. 
Thus, we have the chain of inclusions:
\[
\Irr(G) \supset \Irr_\unit(G) \supset \Irr_\temp(G).
\]
\par

For $\Pi \in \Rep(G)$, its semisimplification is denoted by $[\Pi]$. 
This is viewed as an element of the Grothendieck group
\[
\RR(G) = \bigoplus_{\pi \in \Irr(G)} \Z [\pi],
\]
and we often identify $\pi \in \Irr(G)$ with its image $[\pi]$ in $\RR(G)$. For $\Pi_1, \Pi_2 \in \Rep(G)$, we write
\[
[\Pi_1] \geq [\Pi_2]
\]
if there exists $\Pi \in \Rep(G)$ such that $[\Pi] = [\Pi_1] - [\Pi_2] \in \RR(G)$.
\par

The \emph{socle} of $\Pi$, denoted by $\soc(\Pi)$, is its maximal semisimple subrepresentation. 
We say that $\Pi$ is \emph{socle irreducible} (SI) 
if $\soc(\Pi)$ is irreducible and appears in $[\Pi]$ with multiplicity one.
\par

For a standard parabolic subgroup $P$ of $G$, 
we denote the normalized Jacquet module of $\Pi$ along $P$ by $\Jac_P(\Pi)$. 
When $P$ is clear from context, we simply write $\Jac(\Pi)$. 
\par

Given $\tau \in \Rep(\GL_d(F))$ and a character $\chi$ of $F^\times$, 
we denote by $\tau \chi$ the twist $\tau \otimes \chi \circ \det$.
Let $\Cusp^\GL$ denote 
the set of equivalence classes of supercuspidal representations of $\GL_d(F)$ for some $d \geq 0$.
For $\rho \in \Cusp^\GL$, 
its \emph{exponent} is the unique real number $e(\rho)$ such that $\rho|\cdot|^{-e(\rho)}$ is unitary.
\par

Fix two positive integers $d \leq n$. 
Let $G \in \{\GL_n(F), \SO_{2n+1}(F), \Sp_{2n}(F)\}$, 
and let $P = P_d = MN$ be the standard maximal parabolic subgroup of $G$ 
whose Levi component is $M \cong \GL_d(F) \times G_0$, 
where $G_0$ is a classical group of the same type as $G$. 
For $\tau \in \Rep(\GL_d(F))$ and $\pi_0 \in \Rep(G_0)$, 
the normalized parabolic induction $\Ind_P^G(\tau \boxtimes \pi_0)$ is denoted by
\[
\tau \rtimes \pi_0.
\]
When $G = \GL_n(F)$, this is also written as $\tau \times \pi_0$.
\par

Fix  now $\rho$ a supercuspidal representation of $\GL_d(F)$. 
Given $\Pi \in \Rep(G)$, suppose that we have a decomposition
\[
[\Jac(\Pi)] = \sum_{\tau \in \Irr(\GL_d(F))} [\tau] \otimes [\Pi_\tau]
\]
in the Grothendieck group $\RR(\GL_d(F)) \otimes \RR(G_0)$. 
Then the \emph{$\rho$-derivative} of $\Pi$ is defined by
\[
D_\rho(\Pi) = [\Pi_\rho].
\]
If $n < d$, we simply set $D_\rho(\Pi) = 0$.
\par

If $x - y \in \Z$, we define
\[
D_{\rho|\cdot|^{x}, \dots, \rho|\cdot|^y}
\]
to be the composition $D_{\rho|\cdot|^y} \circ \cdots \circ D_{\rho|\cdot|^x}$,
where the exponents in the subscript are ordered with a fixed sign $\epsilon \in \{\pm1\}$, 
depending on the sign of $x - y$.
\par

Moreover, for $k \geq 0$, we define the \emph{$k$-th $\rho$-derivative} by
\[
D_{\rho}^{(k)}(\Pi) = \frac{1}{k!} \underbrace{D_\rho \circ \cdots \circ D_\rho}_{k\text{ times}}(\Pi),
\]
which is well-defined as an element of an appropriate Grothendieck group.

\subsection{Representations of the Weil--Deligne--Arthur group}
For $G = \GL_n(F)$, $\SO_{2n+1}(F)$, or $\Sp_{2n}(F)$, the \emph{Langlands dual group} $\widehat{G}$ is defined by
\[
\widehat{G} = 
\begin{cases}
\GL_n(\C) & \text{if } G = \GL_n(F), \\
\Sp_{2n}(\C) & \text{if } G = \SO_{2n+1}(F), \\
\SO_{2n+1}(\C) & \text{if } G = \Sp_{2n}(F).
\end{cases}
\]
\par

Let $W_F$ denote the Weil group of $F$.
A \emph{representation} of $W_F \times \SL_2(\C) \times \SL_2(\C)$ is a homomorphism
\[
\psi \colon W_F \times \SL_2(\C) \times \SL_2(\C) \to \GL_n(\C)
\]
such that $\psi(W_F)$ consists of semisimple elements 
and the restriction $\psi|_{\SL_2(\C) \times \SL_2(\C)}$ is algebraic.
\par

Denote by $\Psi^+(G)$ the set of equivalence classes of such homomorphisms with image in $\widehat{G}$. We say that:
\begin{itemize}
\item $\psi \in \Psi^+(G)$ is an \emph{$A$-parameter} for $G$ if $\psi(W_F)$ is bounded;
\item $\psi \in \Psi^+(G)$ is an \emph{$L$-parameter} for $G$ if it is trivial on the second copy of $\SL_2(\C)$.
\end{itemize}
We write $\Psi(G)$ (\resp $\Phi(G)$) 
for the subset of $\Psi^+(G)$ consisting of $A$-parameters (\resp $L$-parameters). 
Through the embedding 
\[
W_F \times \SL_2(\C) \hookrightarrow W_F \times \SL_2(\C) \times \SL_2(\C), \quad (w, g) \mapsto (w, g, \1),
\]
we identify each $\phi \in \Phi(G)$ 
with a homomorphism $\phi \colon W_F \times \SL_2(\C) \to \widehat{G}$.
We define the set of \emph{tempered $L$-parameters} for $G$ as
\[
\Phi_\temp(G) = \Phi(G) \cap \Psi(G).
\]
In other words, $\phi \in \Phi(G)$ is tempered if and only if $\phi(W_F)$ is bounded.
\par

By the local Langlands correspondence, established by Harris--Taylor \cite{HT} and Henniart \cite{He}, 
we can view each $\rho \in \Cusp^\GL$ as an irreducible representation of $W_F$. 
It has bounded image if and only if $\rho$ is unitary, i.e., $e(\rho) = 0$.
\par

Let $S_a$ denote the unique irreducible algebraic representation of $\SL_2(\C)$ of dimension $a$. 
Then any representation $\psi$ of $W_F \times \SL_2(\C) \times \SL_2(\C)$ can be decomposed as
\[
\psi = \bigoplus_{i=1}^r \rho_i \boxtimes S_{a_i} \boxtimes S_{b_i}
\]
for some $\rho_1, \dots, \rho_r \in \Cusp^\GL$.

\subsection{Representations of $\GL_n(F)$}
A \emph{segment} is a set of the form 
\[
[x,y]_\rho = \{\rho|\cdot|^x, \rho|\cdot|^{x-1}, \dots, \rho|\cdot|^y\},
\]
where $\rho \in \Cusp^\GL$ and $x,y \in \R$ with $x-y \in \Z_{\geq0}$. 
One can associate to it two irreducible representations $\Delta_\rho[x,y]$ and $Z_\rho[y,x]$ 
uniquely determined by 
\[
\Delta_\rho[x,y] \hookrightarrow 
\rho|\cdot|^x \times \rho|\cdot|^{x-1} \times \dots \times \rho|\cdot|^y
\twoheadrightarrow Z_\rho[y,x].
\]
For $-A \leq B \leq A$, 
the unique irreducible subrepresentation of 
\[
\Delta_\rho[B, -A] \times \Delta_\rho[B+1, -A+1] \times \dots \times \Delta_\rho[A, -B]
\]
is denoted by 
\[
\Sp(\rho, a,b) = 
\begin{pmatrix}
B & \ldots & A \\
\vdots & \ddots & \vdots\\
-A & \ldots & -B
\end{pmatrix}_\rho, 
\]
where we set $a = A+B+1$ and $b = A-B+1$. 
We call it a \emph{Speh representation}.
It is unitary if and only if $\rho$ is unitary.
\par

By the local Langlands correspondence, there is a canonical bijection
\[
\Phi(\GL_n(F)) \xrightarrow{1:1} \Irr(\GL_n(F)), \quad 
\phi \mapsto \tau_\phi.
\]
If $\phi = \bigoplus_{i=1}^r \rho_i \boxtimes S_{a_i}$ with $e(\rho_1) \leq \dots \leq e(\rho_r)$, then $\tau_\phi$ is the unique irreducible subrepresentation of
\[
\Delta_{\rho_1}[x_1,-x_1] \times \dots \times \Delta_{\rho_r}[x_r,-x_r]
\]
with $x_i = \frac{a_i - 1}{2}$. 
In this case, we write
\[
\tau_\phi = L(\Delta_{\rho_1}[x_1,-x_1], \dots, \Delta_{\rho_r}[x_r,-x_r])
\]
or $\tau_\phi = L(\mathfrak{m})$, 
where
\[
\mathfrak{m} = [x_1,-x_1]_{\rho_1} + \dots + [x_r,-x_r]_{\rho_r}
\]
is the associated multisegment. Here, a \emph{multisegment} is a multi-set of segments, 
viewed as a finite formal sum.
\par

In particular, for $\psi = \bigoplus_{i=1}^r \rho_i \boxtimes S_{a_i} \boxtimes S_{b_i} \in \Psi(\GL_n(F))$, 
the representation $\tau_\psi = \tau_{\phi_\psi}$ associated to the $L$-parameter
\[
\phi_\psi \colon 
W_F \times \SL_2(\C) \ni (w,g) 
\mapsto \psi\left(w, g, 
\begin{pmatrix}
|w|^{1/2} & 0 \\
0 & |w|^{-1/2}
\end{pmatrix}
\right)
\]
is the irreducible parabolic induction of unitary Speh representations:
\[
\tau_\psi = \bigtimes_{i=1}^r \Sp(\rho_i, a_i, b_i).
\]

\subsection{Representations of classical groups}\label{sec.classical}
In this  subsection, 
we let $G$ be either $\SO_{2n+1}(F)$ or $\Sp_{2n}(F)$. 
We regard $\psi \in \Psi^+(G)$ as a self-dual representation of $W_F \times \SL_2(\C) \times \SL_2(\C)$
of symplectic type or of orthogonal type. 
We decompose
\[
\psi = \psi_\good \oplus \psi_\bad
\]
such that for each irreducible summand $\rho \boxtimes S_a \boxtimes S_b$ of $\psi$,
it is a summand of $\psi_\good$ if and only if 
$e(\rho) \in (1/2)\Z$ and $\rho|\cdot|^{-e(\rho)} \boxtimes S_{a+2|e(\rho)|} \boxtimes S_b$ 
is self-dual of the same type as $\psi$. 
We call $\psi$ \emph{of good parity} (\resp \emph{of bad parity}) 
if $\psi = \psi_\good$ (\resp $\psi = \psi_\bad$).
\par

For an $A$-parameter $\psi \in \Psi(G)$, 
if $\psi_\good = \oplus_{i=1}^r \rho_i \boxtimes S_{a_i} \boxtimes S_{b_i}$, 
we set 
\[
A_\psi = \bigoplus_{i=1}^r \Z/2\Z e(\rho_i, a_i, b_i). 
\]
Namely, it is a free $\Z/2\Z$-module with a canonical basis $\{e(\rho_i, a_i, b_i)\}_{i=1,\dots,r}$. 
Its quotient by the subgroup generated by 
\begin{itemize}
\item
$e(\rho_i, a_i, b_i)+e(\rho_j, a_j, b_j)$ such that 
$\rho_i \boxtimes S_{a_i} \boxtimes S_{b_i} \cong \rho_j \boxtimes S_{a_j} \boxtimes S_{b_j}$; 
and 
\item
$z_\psi = \sum_{i=1}^r e(\rho_i, a_i, b_i)$
\end{itemize}
is denoted by $\AA_\psi$. 
We often regard the Pontryagin dual $\widehat{\AA_\psi}$ of $\AA_\psi$
as a subgroup of the Pontryagin dual  $\widehat{A_\psi}$ of $A_\psi$. 
For $\ep \in \widehat{A_\psi}$, 
we set $\ep(\rho_i \boxtimes S_{a_i} \boxtimes S_{b_i}) = \ep(e(\rho_i,a_i,b_i))$. 
If $\psi = \phi$ is a tempered $L$-parameter, 
we write $e(\rho_i, a_i) = e(\rho_i, a_i, 1)$ and $\ep(\rho_i \boxtimes S_{a_i}) = \ep(e(\rho_i,a_i))$.
\par

By Arthur's endoscopic classification (\cite[Theorem 2.2.1]{Ar}), 
for $\psi \in \Psi(G)$, 
there is a multi-set $\Pi_\psi$ over $\Irr_\unit(G)$, 
called the \emph{$A$-packet} associated to $\psi$, 
together with a map
\[
\Pi_\psi \rightarrow \widehat{\AA_\psi},\; \pi \mapsto \pair{\cdot, \pi}_\psi
\]
characterized by certain endoscopic character identities. 
We call an irreducible representation $\pi$ of $G$ \emph{of Arthur type} 
if there is $\psi \in \Psi(G)$ such that $\pi \in \Pi_\psi$.
In \cite{Moe1,Moe2,Moe3,Moe4,Moe5},
M{\oe}glin explicitly constructed the $A$-packet $\Pi_\psi$, 
and in particular, she showed in \cite{Moe5} that $\Pi_\psi$ is multiplicity free, 
i.e., is a subset of $\Irr_\unit(G)$. 
For her construction, see also \cite{X2} or Section \ref{sec.Moe} below.
\par

As in \cite[Theorem 2.2.1]{Ar}, 
if $\psi = \phi$ is a tempered $L$-parameter, 
then $\Pi_\phi$ is a subset of $\Irr_\temp(G)$, 
the map $\Pi_\phi \rightarrow \widehat{\AA_\phi}$ is bijective, 
and 
\[
\Irr_\temp(G) = \bigsqcup_{\phi \in \Phi_\temp(G)} \Pi_\phi.
\]
For $\ep \in \widehat{\AA_\phi}$, 
the corresponding element in $\Pi_\phi$ is denoted by $\pi(\phi, \ep)$.
\par

By the Langlands classification, 
one can extend this classification of $\Irr_\temp(G)$ to $\Irr(G)$. 
For $\phi \in \Phi(G)$, if we write
\[
\phi = \left(\bigoplus_{i=1}^r \rho_i \boxtimes S_{a_i} \right) 
\oplus \phi_0 \oplus \left(\bigoplus_{i=1}^r \rho_i^\vee \boxtimes S_{a_i} \right) 
\]
with $e(\rho_1) \leq \dots \leq e(\rho_r) < 0$ and $\phi_0 \in \Phi_\temp(G_0)$, 
then for $\ep \in \widehat{A_\phi} = \widehat{A_{\phi_0}}$, 
we define $\pi(\phi, \ep)$ as the unique irreducible subrepresentation of 
\[
\Delta_{\rho_1}[x_1,-x_1] \times \dots \times \Delta_{\rho_r}[x_r,-x_r] \rtimes \pi(\phi_0,\ep)
\]
with $x_i = \half{a_i-1}$.
We also write it as 
\[
\pi(\phi, \ep) = L(\mm; \pi(\phi_0,\ep)) 
= L(\Delta_{\rho_1}[x_1,-x_1], \dots, \Delta_{\rho_r}[x_r,-x_r]; \pi(\phi_0,\ep)), 
\]
where $\mm = [x_1,-x_1]_{\rho_1} + \dots + [x_r,-x_r]_{\rho_r}$.
Then $\Pi_\phi$ is a subset of $\Irr(G)$, 
the map $\Pi_\phi \rightarrow \widehat{\AA_\phi}$ given by $\pi(\phi, \ep) \mapsto \ep$ is bijective, 
and 
\[
\Irr(G) = \bigsqcup_{\phi \in \Phi(G)} \Pi_\phi.
\]
We call $\Pi_\phi$ the \emph{$L$-packet} associated to $\phi$. 
\par

Let $\phi \in \Phi(G)$. 
As we have seen above, we can decompose it as 
$\phi = \phi_\good \oplus \phi_\bad$ 
with $\phi_\good$ (\resp $\phi_\bad$) of good (\resp bad) parity. 
If $\pi = \pi(\phi, \ep)$, we set $\pi_\good = \pi(\phi_\good, \ep)$. 
We say that $\pi$ is \emph{of good parity} if $\pi = \pi_\good$, or equivalently, $\phi = \phi_\good$.

\subsection{Geometric Lemma}
We recall Geometric Lemma and Tadi\'c's formula in this subsection. 
\par

Let $G$ be either $\SO_{2n+1}(F)$ or $\Sp_{2n}(F)$. 
Fix an $F$-rational Borel subgroup $B = TU$ of $G$. 
Let $W$ be the Weyl group of $G$. 
For two standard parabolic subgroup $P_1 = M_1N_1$ and $P_2 = M_2N_2$, 
set 
\[
W^{M_1,M_2} = \{w \in W \;|\; w(M_1 \cap B)w^{-1} \subset B,\; w^{-1}(M_2 \cap B)w \subset B\}. 
\]
For any $w \in W^{M_1,M_2}$, we define a functor
\[
F_w \colon \Rep(M_1) \rightarrow \Rep(M_2)
\]
by 
\[
F_w = \Ind_{wP_1w^{-1} \cap M_2}^{M_2} \circ \Ad(w) \circ \Jac_{w^{-1}P_2w \cap M_1}.
\]

\begin{thm}[Geometric Lemma ({\cite[2.11]{BZ}})]\label{GeometricLemma}
The functor $F = \Jac_{P_2} \circ \Ind_{P_1}^G \colon \Rep(M_1) \rightarrow \Rep(M_2)$
is glued from functors $F_w$, for $w \in W^{M_1,M_2}$.
\end{thm}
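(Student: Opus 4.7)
The plan is to follow the classical Bernstein--Zelevinsky strategy by organizing $\Jac_{P_2} \circ \Ind_{P_1}^G$ according to the geometry of $(P_2,P_1)$-double cosets in $G$.

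First, I would invoke the Bruhat decomposition to write
\[
G = \bigsqcup_{w \in W^{M_1,M_2}} P_2 w P_1,
\]
using $W^{M_1,M_2}$ as a system of minimal-length double-coset representatives. Choosing a total order on $W^{M_1,M_2}$ refining the closure order on double cosets, I would produce a filtration on the underlying sheaf of sections of $\Ind_{P_1}^G(\sigma)$ by support. The $w$-th successive quotient is the extension by zero of sections supported on the locally closed subset $P_2 w P_1$, and this is the input for the Mackey-type analysis that follows.

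Second, I would identify each successive subquotient. The double coset $P_2 w P_1$, as a $P_2$-space, is isomorphic to $P_2 / (P_2 \cap wP_1 w^{-1})$, so the corresponding piece of $\Ind_{P_1}^G(\sigma)$ is isomorphic (as a $P_2$-module) to the compactly supported induction from $P_2 \cap wP_1w^{-1}$ of the pullback of $\sigma$ along conjugation by $w$. Applying $\Jac_{P_2}$, which is exact on the category $\Rep(G)$, gives a filtration on $\Jac_{P_2} \circ \Ind_{P_1}^G(\sigma)$ whose $w$-th graded piece is obtained by taking $N_2$-coinvariants of the above compactly supported induction. The $N_2$-coinvariants collapse the induction down to the ordinary parabolic induction from $M_2 \cap wP_1w^{-1}$ to $M_2$, with inducing datum the $N_2 \cap wP_1w^{-1}$-coinvariants of $\Ad(w)^*\sigma$. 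Untangling this via the identification $M_2 \cap wP_1w^{-1} = \Ad(w)(w^{-1}P_2 w \cap M_1)$ yields exactly $F_w(\sigma) = \Ind_{wP_1w^{-1} \cap M_2}^{M_2} \circ \Ad(w) \circ \Jac_{w^{-1}P_2w \cap M_1}(\sigma)$.

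The main obstacle will be bookkeeping the modular characters introduced by normalized parabolic induction and normalized Jacquet functors. Compactly supported sections on $P_2 w P_1$ naturally carry factors of $\delta_{P_1}^{1/2}$ and $\delta_{P_2}^{1/2}$, while the target expression $F_w$ uses the normalizations of $\Jac_{w^{-1}P_2 w \cap M_1}$ and $\Ind_{wP_1 w^{-1} \cap M_2}^{M_2}$. Reconciling these requires the standard factorizations $\delta_{P_i}|_{M_i \cap wP_jw^{-1}} = \delta_{wP_jw^{-1} \cap M_i} \cdot \delta_{N_i \cap wP_jw^{-1}}$ and a careful check that all spurious factors cancel. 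Once the characters are matched, combining with the exactness of $\Jac_{P_2}$ and Frobenius reciprocity delivers the desired gluing description of $\Jac_{P_2} \circ \Ind_{P_1}^G$ from the functors $F_w$.
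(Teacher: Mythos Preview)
The paper does not prove this theorem; it is stated as a citation of \cite[2.11]{BZ} and used as a black box. Your sketch is a faithful outline of the classical Bernstein--Zelevinsky argument (Bruhat decomposition into $(P_2,P_1)$-double cosets, filtration by closure order, Mackey-type identification of the graded pieces, and the modular-character bookkeeping), so there is nothing in the paper to compare it against beyond the reference itself.
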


In \cite{T-str}, Tadi\'c studied $W^{M_1,M_2}$ when $P_1$ and $P_2$ are maximal, 
and he got the following formula after  semisimplification. 

\begin{cor}[Tadi\'c's formula {\cite[Theorems 5.4, 6.5]{T-str}}]\label{TadicFormula}
Suppose that $P = MN$ is a maximal parabolic subgroup of $G$ with $M \cong \GL_m(F) \times G_0$.
Then, for any finite length representation of $G$ of the form $\tau \rtimes \pi$, we have
\[
[\Jac_P(\tau \rtimes \pi)] = \sum_{\substack{n_1,n_2,n_3,n_4 \geq 0 \\ n_1+n_3+n_4 = m}}
\sum_{\substack{\tau_1 \otimes \tau_2 \otimes \tau_3 \\ \tau_4 \otimes \pi_0}}
\left(\tau_1 \times \tau_4 \times \tau_3^\vee\right) \otimes \left( \tau_2 \rtimes \pi_0 \right), 
\]
where 
\begin{itemize}
\item
$\tau_1 \otimes \tau_2 \otimes \tau_3$ runs over irreducible representations appearing in 
$[\Jac_R(\tau)]$ where $R$ is the standard parabolic subgroup  corresponding to the partition $(n_1,n_2,n_3)$;
\item
$\tau_4 \otimes \pi_0$ runs over irreducible representations appearing in 
$[\Jac_{P'}(\pi_0)]$ where $P'$ is the standard parabolic subgroup whose Levi is of the form $\GL_{n_4}(F) \times G_0'$.
\end{itemize}
Here, when $n_i = 0$, we understand that $\tau_i = \1_{\GL_0(F)}$.
\end{cor}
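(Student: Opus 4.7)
The plan is to derive the formula directly from the Geometric Lemma (Theorem~\ref{GeometricLemma}) by making the double coset analysis explicit in the case where both parabolics are maximal of the form $\GL_m(F) \times G_0$ inside $G$. Concretely, I take $P_1 = P_2 = P = MN$ in Theorem~\ref{GeometricLemma}, so that the functor under study is $F = \Jac_P \circ (\tau \boxtimes \pi \mapsto \tau \rtimes \pi)$, and I analyze the layers $F_w$ for $w \in W^{M,M}$ separately, then sum their semisimplifications.

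The first step is to parameterize $W^{M,M}$ combinatorially. Realizing $W$ as the group of signed permutations of the coordinates $(1,\dots,m,0,-m,\dots,-1)$ (with the classical block $G_0$ fixing the coordinates labeled by $G_0$), a minimal-length double coset representative is determined by how the $m$ ``positive'' GL-coordinates of the source are distributed between three regions in the target: those landing in the positive GL-block of $M$ (contributing a block of size $n_1$), those landing in the $G_0$-part (contributing $n_2$), and those landing in the negative GL-block, i.e., being sign-flipped (contributing $n_3$). In addition, some of the coordinates belonging to $G_0$ in the source must be rearranged into the GL-block of the target (contributing $n_4$). The constraint $n_1 + n_3 + n_4 = m$ is precisely the requirement that the target GL-block has size $m$, and $n_1 + n_2 + n_3 = m$ expresses that the source GL-block has size $m$. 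This is exactly the combinatorics in \cite[Section 5-6]{T-str}.

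The second step is to compute $F_w$ for such a $w$. Writing $R \subset \GL_m(F)$ for the standard parabolic with Levi $\GL_{n_1}(F) \times \GL_{n_2}(F) \times \GL_{n_3}(F)$ and $P'$ for the standard parabolic of $G_0$ with Levi $\GL_{n_4}(F) \times G_0'$, the intersection $w^{-1}P w \cap M$ corresponds exactly to $R \times P'$, so $\Jac_{w^{-1}P w \cap M}(\tau \boxtimes \pi) = \Jac_R(\tau) \boxtimes \Jac_{P'}(\pi)$. Conjugation by $w$ sends the middle GL-block into the $G_0$-factor of the target Levi, reverses and dualizes the third GL-block (because of the sign flip, which on representations amounts to taking the contragredient $\tau_3^\vee$), and moves $\tau_4$ from the $G_0$-part of the source into the GL-part of the target. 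The remaining induction $\Ind_{wPw^{-1}\cap M}^M$ then recombines the three GL-pieces $\tau_1, \tau_4, \tau_3^\vee$ into $\tau_1 \times \tau_4 \times \tau_3^\vee$ on the GL-factor of $M$, and induces $\tau_2 \otimes \pi_0$ to $\tau_2 \rtimes \pi_0$ on the classical factor.

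The final step is to pass to the Grothendieck group. Although the filtration produced by the Geometric Lemma is not split in general, the associated graded is independent of choices in $\RR(M)$, and summing over $w \in W^{M,M}$ gives the asserted identity. The only genuine obstacle is the bookkeeping in step two: one must check that the minimal-length representatives really are parameterized by the quadruples $(n_1,n_2,n_3,n_4)$ with $n_1+n_3+n_4 = m$ and that each $F_w$ yields exactly the contribution written, taking care of the sign-flip that produces the dual $\tau_3^\vee$. This is carried out in \cite[Theorems 5.4, 6.5]{T-str}, which the statement cites, and I would simply refer to that analysis once the reduction via Theorem~\ref{GeometricLemma} has been performed.
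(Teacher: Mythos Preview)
Your proposal is correct and aligns with the paper's treatment: the paper does not give an independent proof of this corollary, but simply records it as a consequence of the Geometric Lemma (Theorem~\ref{GeometricLemma}) after Tadi\'c's explicit analysis of $W^{M_1,M_2}$ in the maximal-parabolic case, citing \cite[Theorems 5.4, 6.5]{T-str} for the details. Your sketch of the double coset parametrization and the identification of each $F_w$ is exactly the content of that reference, so there is no genuine divergence.

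One small point worth tightening: you set $P_1 = P_2 = P$, which forces $\tau \in \Rep(\GL_m(F))$ and gives $n_1+n_2+n_3 = m$. The formula as stated (and as used elsewhere in the paper) covers the slightly more general situation where $\tau \in \Rep(\GL_d(F))$ with $d$ not necessarily equal to $m$, so that $P_1$ and $P_2$ are two possibly different maximal parabolics and the constraint becomes $n_1+n_2+n_3 = d$ while $n_1+n_3+n_4 = m$ remains. The combinatorics and the computation of $F_w$ are unchanged in this generality, so your argument extends without difficulty; just be aware that the source and target GL-blocks need not have the same size.
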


\subsection{Methods for proving (non-)unitarity}
Recall that an irreducible representation $\pi$ of a group $G$ is \emph{hermitian}
if $\overline{\pi} \cong \pi^\vee$, 
where $\overline{\pi}$ denotes the complex conjugate of $\pi$.
In other words, $\pi$ is hermitian if and only if it admits a non-degenerate $G$-invariant hermitian form. 
Note that if $\pi$ is unitary, then it is also hermitian.
\par

Now we list some basic methods for proving or disproving unitarity. 

\begin{prop}\label{list}
Fix a standard parabolic subgroup $P=MN$ of $G$ 
with $M \cong \GL_{m_1}(F) \times \dots \times \GL_{m_k}(F) \times G_0$. 
Let $\tau_i \in \Irr(\GL_{m_i}(F))$, $\pi_0 \in \Irr(G_0)$ 
and set $\pi_M = \tau_1 \boxtimes \dots \boxtimes \tau_k \boxtimes \pi_0 \in \Irr(M)$.
\par

\begin{enumerate}
\item(Unitary induction (UI))
If $\pi_M$ is unitary, 
then $\Ind_P^G(\pi_M)$ is a direct sum of irreducible unitary representations of $G$. 

\item(Unitary reduction (UR))
If $\pi_M$ is hermitian and if $\Ind_P^G(\pi_M)$ is irreducible and unitary, 
then $\pi_M$ is also unitary. 

\item(Complimentary series (CS))
Let $x_1(t), \dots, x_r(t) \colon [0,1] \rightarrow \R$ be continuous functions, 
and set 
\[
\Pi_t = \tau_1|\cdot|^{x_1(t)} \times \dots \times \tau_r|\cdot|^{x_r(t)} \rtimes \pi_0.
\]
If $\Pi_t$ is irreducible and hermitian for $0 \leq t < 1$, and if $\Pi_0 = \Ind_P^G(\pi_M)$ is unitary, 
then all irreducible subquotients of $\Pi_t$ are unitary for $0 \leq t \leq 1$. 

\item(Beyond the first reducibility point (RP1))
Suppose that 
\begin{itemize}
\item
$k=1$; 
\item
$\tau_1 = \times_{i=1}^r\Sp(\rho_i,c_i,d_i)$ is a product of unitary Speh representations
with $\rho_i \cong \rho_i^\vee$ for $1 \leq i \leq r$; 
\item
$\pi_0$ is of Arthur type of good parity.
\end{itemize}
Let $\psi_M \in \Psi(M)$ be an $A$-parameter with $\pi_M \in \Pi_{\psi_M}$, 
and let $R_P(w,\pi_M,\psi_M)$ be the normalized intertwining operator defined by Arthur \cite[Section 2.4]{Ar} 
with $w \in W(M,G)$. 
Assume further that $R_P(w,\pi_M,\psi_M)$ is not a scalar (so in particular $\Ind_P^G(\pi_M)$ is reducible).
Then $\tau|\cdot|^s \rtimes \pi_0$ is not unitary for sufficiently small  $s > 0$. 
\end{enumerate}
\end{prop}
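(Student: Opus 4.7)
I would handle the four items separately, in increasing order of depth, with (RP1) as the main technical obstacle; the first three are variants of standard tools in the $p$-adic unitary dual literature and deserve only brief comment.

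For (UI), I transport the positive-definite $M$-invariant hermitian form on $\pi_M$ to $\Ind_P^G(\pi_M)$ via the integrated formula $\langle f,g\rangle = \int_K \langle f(k),g(k)\rangle_M\, dk$ over a maximal compact $K$ in good position; normalized induction makes this form $G$-invariant, and positive-definiteness is immediate, so admissibility and finite length force semisimplicity with unitary simple summands. For (UR), I would conversely use Casselman's canonical pairing to descend the invariant form on the irreducible unitary $\Ind_P^G(\pi_M)$ to an $M$-invariant hermitian form on $\Jac_P(\Ind_P^G(\pi_M))$, then project onto the $\pi_M$-isotypic part at the leading exponent via Frobenius reciprocity; Schur's lemma applied to $\pi_M$ hermitian identifies the resulting nonzero form with a real scalar multiple of the given hermitian form on $\pi_M$, and positivity of the ambient form forces the scalar to be positive. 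For (CS), the standard signature-is-locally-constant principle applies: on the open locus in $[0,1]$ where $\Pi_t$ is irreducible and hermitian, the signature of the (unique up to real scalar) invariant form is an integer-valued continuous function of $t$, hence constant on each connected component, and unitarity of $\Pi_0$ forces the fully positive signature throughout $[0,1)$; at $t=1$ the limiting form is positive semi-definite with $G$-invariant radical, so all irreducible subquotients inherit positive-definite invariant forms.

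For (RP1), I would carry out the classical reducibility-point analysis, made precise by Arthur's normalization. Let $M(s) \colon \tau|\cdot|^s \rtimes \pi_0 \to \tau|\cdot|^{-s} \rtimes \pi_0$ be the standard long intertwining operator attached to the nontrivial $w \in W(M,G)$. Self-duality of $\tau$ (each $\rho_i$ is self-dual) together with the hermitian property of $\pi_0$ implies that composing $M(s)$ with the canonical bilinear pairing between $\tau|\cdot|^s \rtimes \pi_0$ and $\tau|\cdot|^{-s} \rtimes \pi_0$ recovers, up to a real sign, the invariant hermitian form on $\tau|\cdot|^s \rtimes \pi_0$ whenever one exists. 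I would factor $M(s) = n(s)\, R_P(w,\pi_M|\cdot|^s,\psi_M)$ into Arthur's meromorphic normalization scalar $n(s)$ and the normalized intertwining operator from \cite[Section 2.4]{Ar}. The expression of $n(s)$ through gamma factors of pairs of self-dual representations shows that $n(s)$ is real-analytic and positive for small $s > 0$, while by Arthur's theory $R_P(w,\pi_M|\cdot|^s,\psi_M)$ is analytic and unitary at $s=0$, with value $R_P(w,\pi_M,\psi_M)$, acting by a finite-order sign on each irreducible summand of the induced representation. The hypothesis that $R_P(w,\pi_M,\psi_M)$ is not a scalar forces both signs $+1$ and $-1$ to occur on independent stable subspaces; by continuity of the spectrum this persists for small $s > 0$, producing a hermitian form of mixed signature on the generically irreducible $\tau|\cdot|^s \rtimes \pi_0$, which therefore fails to be positive definite and hence fails to be unitary.

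The hard part is (RP1), and specifically two inputs: the positivity of $n(s)$ for $s$ slightly above $0$, and the identification of the sign eigenspaces of $R_P(w,\pi_M,\psi_M)$ with the isotypic components carrying each sign of the limiting hermitian form. Both rely essentially on the good-parity Arthur-type hypothesis on $\pi_0$ and the self-dual Speh structure of $\tau$: together they ensure that the local factors defining $n(s)$ are real and regular at $s=0$, and endow $R_P(w,\pi_M,\psi_M)$ with the endoscopic, sign-valued interpretation needed to read off its spectrum.
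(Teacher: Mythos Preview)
The paper does not actually prove this proposition; immediately after the statement it cites \cite{T-ext} and \cite[Section 2]{MT} for (UI) and (UR), \cite[Lemma 3.3]{LMT} for (CS), and the latter part of \cite[Section 2]{MT} for (RP1). Your sketches for (UI)--(CS) are the standard arguments found in those references, and your outline for (RP1) is precisely the strategy carried out in \cite{MT}: realize the hermitian form on $\tau|\cdot|^s \rtimes \pi_0$ via the long intertwining operator $M(s)$, factor $M(s)$ as a scalar $n(s)$ times Arthur's normalized operator, use the hypothesis to obtain both $\pm1$ eigenspaces of the normalized operator at $s=0$, and then invoke positivity of $n(s)$ for small $s>0$ to conclude mixed signature. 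So there is no discrepancy of approach; you are reconstructing the contents of the cited sources.

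One point deserves tightening. You assert that ``the expression of $n(s)$ through gamma factors \dots\ shows that $n(s)$ is real-analytic and positive for small $s>0$'', and you correctly flag this as the hard input. It is indeed the crux, and it is not a formality: in \cite{MT} this is established by an explicit analysis of the local $L$- and $\epsilon$-factors entering Arthur's normalization, checking that under the self-dual and good-parity hypotheses no zero or sign change of $n(s)$ occurs at $s=0$. A complete write-up should either reproduce that computation or pin down the exact place in \cite{MT} or \cite{Ar} where the regularity and positivity are verified for this class of inducing data. Apart from this, your proposal is sound and aligned with the literature the paper invokes.
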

For (UI) and (UR) (\resp (CS)), see \cite{T-ext} or \cite[Section 2]{MT} (\resp \cite[Lemma 3.3]{LMT}). 
On the other hand, (RP1) was established in the latter part of \cite[Section 2]{MT}, 
where another criterion (RP2) was also presented.

\section{Main Theorem}
Let $n \geq 0$, and fix $G$ to be either the split group $\SO_{2n+1}(F)$ or $\Sp_{2n}(F)$. 
Accordingly, define
\[
\Irr^G = 
\begin{cases}
\bigcup_{m \geq 0} \Irr(\SO_{2m+1}(F)) & \text{if } G = \SO_{2n+1}(F), \\
\bigcup_{m \geq 0} \Irr(\Sp_{2m}(F)) & \text{if } G = \Sp_{2n}(F),
\end{cases}
\]
and set
\[
\Irr^\GL = \bigcup_{m \geq 0} \Irr(\GL_m(F)).
\]

\subsection{Statement}
Now we can state our main theorem. 
\begin{thm}\label{unitary_gp}
If $\pi(\phi,\ep)$ is an irreducible unitary representation of $G$, 
then $\pi(\phi_\good, \ep)$ is of Arthur type. 
\end{thm}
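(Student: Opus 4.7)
The plan is to deduce Theorem \ref{unitary_gp} from the good-parity case established in Theorem \ref{MAINTHM}, by factoring out the bad-parity contribution through an irreducible parabolic induction and then descending unitarity via unitary reduction.

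First, I would organize the $L$-parameter as $\phi = \phi_\good \oplus \phi_\bad$, and further split $\phi_\bad$ into three pieces $\phi_\bad^-$, $\phi_{0,\bad}$, $\phi_\bad^+$ according to whether the exponents of the underlying supercuspidals are negative, zero (tempered), or positive, paired by duality $\phi_\bad^- = (\phi_\bad^+)^\vee$. Using Mœglin's analysis of reducibility of parabolic induction of GL representations into classical groups (cited in the paper after \cite{Moe1}), together with the disjointness of good- and bad-parity supercuspidal supports, I would produce an irreducible factorization
\[
\pi(\phi, \ep) \cong \tau \rtimes \pi(\phi_\good, \ep),
\]
where $\tau \in \Irr^\GL$ is a GL representation assembled canonically from $\phi_\bad^-$, $\phi_\bad^+$, and $\phi_{0,\bad}$. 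The tempered bad-parity summands have to be pulled out to the GL factor using the fact that self-dual tempered parameters of the wrong type admit such a realization.

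Second, I would transfer the unitarity of $\pi(\phi, \ep)$ to the good-parity factor. Since $\pi(\phi, \ep)$ is unitary, it is hermitian; uniqueness of the factorization implies that both $\tau$ and $\pi(\phi_\good, \ep)$ are hermitian. Applying unitary reduction (Proposition \ref{list}(UR)) to the irreducible unitary induction $\tau \rtimes \pi(\phi_\good, \ep)$ yields that the inducing datum $\tau \boxtimes \pi(\phi_\good, \ep)$ is unitary on the Levi subgroup $\GL \times G_0$. Since unitarity of an irreducible representation of a product group is equivalent to unitarity of each tensor factor, in particular $\pi(\phi_\good, \ep)$ is unitary. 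Theorem \ref{MAINTHM}, applied to this unitary good-parity representation, then concludes that $\pi(\phi_\good, \ep)$ is of Arthur type.

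The main obstacle in my view is the first step, specifically the treatment of the tempered bad-parity summands $\phi_{0,\bad}$. Whereas the non-tempered bad-parity Langlands data $\phi_\bad^\pm$ correspond naturally to GL factors in the Langlands classification, the tempered portion $\phi_{0,\bad}$ sits inside the tempered core and must be moved out to a GL factor. This requires a careful application of Mœglin's explicit construction of tempered $L$-packets (recalled in Section \ref{sec.Moe}) and a delicate analysis of the associated normalized intertwining operators, ensuring both the irreducibility of the final induction and the compatibility of the hermitian forms needed to apply unitary reduction.
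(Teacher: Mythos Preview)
Your approach differs from the paper's, which does not reduce Theorem \ref{unitary_gp} to the good-parity case.  The paper proves Theorem \ref{unitary_gp} directly via Theorems \ref{inductive} and \ref{initial}, with Theorem \ref{MAINTHM} then a special case.  The bad-parity factor $\tau_\bad^-$ is carried through the entire argument of Lemma \ref{pi'}: the unitarity of $\pi(\phi,\ep)$ itself (not of $\pi(\phi_\good,\ep)$) is what forces $S \rtimes \pi(\phi,\ep)$ to be semisimple, and only at the very end of that lemma is $\tau_\bad^-$ peeled off via Frobenius reciprocity to produce the good-parity representations $\pi'_{j+1}$.

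Your step 2 has a genuine gap.  Unitary reduction (Proposition \ref{list} (UR)) requires the inducing datum $\tau \boxtimes \pi(\phi_\good,\ep)$ to be hermitian, i.e., $\overline{\tau} \cong \tau^\vee$.  While $\pi(\phi_\good,\ep)$ is always hermitian, the $\GL$ factor $\tau$ typically is not.  If $\phi_\bad$ contains $\rho|\cdot|^e \boxtimes S_a$ with $\rho$ unitary self-dual and $e$ real with $e \notin (1/2)\Z$, then $\tau_\bad^-$ contains a factor $\Delta_\rho\bigl[\half{a-1}-e,\, -\half{a-1}-e\bigr]$ satisfying $\overline{\tau_\bad^-} \cong \tau_\bad^-$ but $(\tau_\bad^-)^\vee \not\cong \tau_\bad^-$.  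A concrete instance is $G = \SO_3$, $\phi = |\cdot|^{1/4} \oplus |\cdot|^{-1/4}$: here $\pi(\phi,\ep)$ is a unitary complementary series representation, yet $\tau_\bad^- = |\cdot|^{-1/4}$ is not hermitian, so (UR) does not apply.  There is no evident way to descend unitarity from $\pi(\phi,\ep)$ to $\pi(\phi_\good,\ep)$ in this situation, and this is exactly why the paper never attempts such a descent.  The obstacle you flag in step 1 (handling tempered bad-parity summands) is real but comparatively minor; the irreducibility of $\tau \rtimes \pi(\phi_\good,\ep)$ is plausible and likely provable, though the paper neither states nor needs it.
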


Note that one can determine whether $\pi(\phi_\good, \ep)$ is of Arthur type 
using the algorithm described in \cite{At-Is_Arthur}. 
Since every irreducible representation of Arthur type is known to be unitary, 
we obtain the following corollary.

\begin{cor}
Let $\pi \in \Irr(G)$ be of good parity. 
Then $\pi$ is unitary if and only if $\pi$ is of Arthur type.
\end{cor}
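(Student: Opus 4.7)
The plan is to deduce the corollary immediately from Theorem \ref{unitary_gp} together with Arthur's general unitarity result for $A$-packets. The statement has two directions, and both follow without additional analytic input.

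For the ``Arthur type $\Rightarrow$ unitary'' direction, nothing new is needed: by \cite[Theorem 2.2.1]{Ar}, every member of every $A$-packet $\Pi_\psi$ with $\psi \in \Psi(G)$ is unitary. This is already recorded explicitly in the introduction and in Section \ref{sec.classical}.

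For the converse, suppose $\pi \in \Irr(G)$ is of good parity and unitary. Using the parametrization of $\Irr(G)$ recalled in Section \ref{sec.classical}, write $\pi = \pi(\phi, \ep)$ with $\phi \in \Phi(G)$ and $\ep \in \widehat{\AA_\phi}$. The definition of ``of good parity'' for representations (given at the end of Section \ref{sec.classical}) is precisely that $\pi = \pi_\good$, which by construction is equivalent to $\phi = \phi_\good$, i.e., $\phi_\bad = 0$. Hence $\pi(\phi_\good, \ep) = \pi(\phi, \ep) = \pi$, and Theorem \ref{unitary_gp} then asserts that $\pi(\phi_\good, \ep)$ is of Arthur type, which is the required conclusion.

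There is no real obstacle at the level of the corollary itself; the entire analytic and combinatorial effort is concentrated in Theorem \ref{unitary_gp}, whose task is to extract the good-parity part $\pi(\phi_\good, \ep)$ of an arbitrary unitary $\pi(\phi, \ep)$ and prove that it is of Arthur type. The corollary merely specializes that theorem to the case where $\pi$ already equals its good-parity part, thereby producing the sharp ``iff'' characterization of unitarity in the good-parity range.
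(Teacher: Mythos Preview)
Your proof is correct and follows exactly the paper's approach: the ``if'' direction is Arthur's unitarity theorem \cite[Theorem 2.2.1]{Ar}, and the ``only if'' direction is the specialization of Theorem \ref{unitary_gp} to the case $\phi = \phi_\good$, so that $\pi(\phi_\good,\ep) = \pi$. The paper states this deduction in a single sentence just before the corollary, and your write-up simply unpacks it.
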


\subsection{The SZ-Decomposition}\label{sec.SZ}
By the Langlands classification, 
every $\pi(\phi,\ep) \in \Irr^G$ can be written as 
\[
\soc\left(\tau_\bad^- \times 
\Delta_{\rho_1}[x_1,y_1]^{k_1} \times \dots \times \Delta_{\rho_t}[x_t,y_t]^{k_t} 
\rtimes \pi_\temp\right), 
\]
where 
\begin{itemize}
\item
$\tau_\bad^-$ is an irreducible representation of some general linear group
whose $L$-parameter $\phi_\bad^-$ satisfies that $\phi_\bad = \phi_\bad^- \oplus (\phi_\bad^-)^\vee$; 
\item
$\pi_\temp \in \Irr^G$ is tempered; 
\item
$\rho_i \in \Cusp^\GL$ is unitary; 
\item
$x_i+y_i < 0$ and $x_1 \leq \dots \leq x_t$;
\item
If $\rho_i \cong \rho_j$, then 
\[
i < j \iff x_i < x_j, \text{ or } x_i = x_j \text{ and } y_i < y_j.
\]
\end{itemize}
Here, $\Delta_{\rho_i}[x_i,y_i]^{k_i} = \Delta_{\rho_i}[x_i,y_i] \times \dots \times \Delta_{\rho_i}[x_i,y_i]$ ($k_i$ times).
In particular, $\pi(\phi_\good, \ep) = L(\Delta_{\rho_1}[x_1,y_1]^{k_1}, \dots, \Delta_{\rho_t}[x_t,y_t]^{k_t}; \pi_\temp)$.
Notice that $\tau_\bad^- \times \Delta_{\rho_i}[x_i,y_i]$ is irreducible by \cite[Proposition 8.6]{Z}.
\par

Write $\{i \;|\; x_i < 0\} = \{1, \dots, s\}$, and set $\tau_i^- = \Delta_{\rho_i}[x_i,y_i]^{k_i}$.
If we set 
\[
\pi' = L(\Delta_{\rho_{s+1}}[x_{s+1},y_{s+1}]^{k_{s+1}}, \dots, \Delta_{\rho_t}[x_t,y_t]^{k_t}; \pi_\temp),
\]
then $\pi$ is the socle of 
$\tau_\bad^- \times \tau_1^- \times \dots \times \tau_s^- \rtimes \pi'$.
Note that for $\rho \in \Cusp^\GL$ with $\rho^\vee \cong \rho$ and $x \in \R$, 
if $D_{\rho|\cdot|^x}(\pi') \not= 0$, then $x \geq 0$.
Next, let $\hat\pi'$ be the Aubert dual of $\pi'$ (see \cite{Au}).
We write
\[
\hat\pi' = L(\Delta_{\rho'_1}[x'_1,y'_1]^{k'_1}, \dots, \Delta_{\rho'_{t'}}[x'_{t'},y'_{t'}]^{k'_{t'}}; \pi'_\temp)
\]
as above. 
Writing $\{i \;|\; x'_i \leq -1\} = \{1, \dots, r\}$, 
we can find $\pi_0 \in \Irr^G$ such that 
\[
\hat\pi' = \soc(\Delta_{\rho'_1}[x'_1,y'_1]^{k'_1} \times \dots \times \Delta_{\rho'_{r}}[x'_{r},y'_{r}]^{k'_{r}} 
\rtimes \hat\pi_0).
\]
Note that for $\rho \in \Cusp^\GL$ with $\rho^\vee \cong \rho$ and $x \in \R$, 
if $D_{\rho|\cdot|^x}(\hat\pi_0) \not= 0$, then $-\half{1} \leq x \leq 0$.
If we set $\tau_i^+ = Z_{\rho_i'}[-x'_i,-y'_i]^{k'_i}$, 
we conclude that $\pi$ is the socle of 
\[
\tau_\bad^- \times \tau_1^- \times \dots \times \tau_s^- \times \tau_1^+ \times \dots \times \tau_r^+ \rtimes \pi_0.
\]
Moreover, 
if we set 
\begin{align*}
&\pi_1 = \soc(\tau_r^+ \rtimes \pi_0), \dots, \pi_r = \soc(\tau_1^+ \rtimes \pi_{r-1}), \\
&\pi_{r+1} = \soc(\tau_s^- \rtimes \pi_r), \dots, \pi_{r+s} = \soc(\tau_1^- \rtimes \pi_{r+s-1}), 
\end{align*}
then we see that  
\begin{itemize}
\item
$\pi_i$ is irreducible for all $0 \leq i \leq r+s$ with $\pi(\phi_\good, \ep) = \pi_{r+s}$; 
\item
$\pi_0$ satisfies that 
\begin{talign*}
D_{\rho|\cdot|^x}(\pi_0) \not= 0 \implies x \in \{0,\frac{1}{2}\}
\end{talign*}
for any $\rho \in \Cusp^\GL$ with $\rho^\vee \cong \rho$; 
\item
for $1 \leq i \leq r$, we have $\pi_i = \soc(\tau_{r-i+1}^+ \rtimes \pi_{i-1})$ and 
\begin{talign*}
D_{\rho|\cdot|^x}(\pi_i) \not= 0 \implies 0 \leq x \leq -x'_{r-i+1}
\end{talign*}
for any $\rho \in \Cusp^\GL$ with $\rho^\vee \cong \rho$; 
\item
for $r+1 \leq i \leq r+s$, we have $\pi_i = \soc(\tau_{r+s-i+1}^- \rtimes \pi_{i-1})$ and 
\begin{talign*}
D_{\rho|\cdot|^x}(\pi_i) \not= 0 \implies x \geq x_{r+s-i+1}
\end{talign*}
for any $\rho \in \Cusp^\GL$ with $\rho^\vee \cong \rho$.
\end{itemize}
Here, we note that $x_1 \leq \dots \leq x_s < 0$ and $x'_1 \leq \dots \leq x'_r \leq -1$.
We call 
\[
(\tau_\bad^-, \{\tau_j^-\}_{1 \leq j \leq s}, \{\tau_j^+\}_{1 \leq j \leq r}, \{\pi_j\}_{0 \leq j \leq r+s})
\]
the \emph{SZ-decomposition} of $\pi$.
\par

The following is the inductive step of the proof of Theorem \ref{unitary_gp}.
\begin{thm}\label{inductive}
Let $\pi(\phi,\ep) \in \Irr^G$, 
and let $(\tau_\bad^-, \{\tau_j^-\}_{1 \leq j \leq s}, \{\tau_j^+\}_{1 \leq j \leq r}, \{\pi_j\}_{0 \leq j \leq r+s})$
be its SZ-decomposition.
For $0 \leq j < r+s$, if $\pi(\phi,\ep)$ is unitary and $\pi_j$ is of Arthur type, 
then $\pi_{j+1}$ is of Arthur type. 
In particular, if $\pi(\phi,\ep)$ is unitary and $\pi_0$ is of Arthur type, then $\pi(\phi_\good,\ep)$ is of Arthur type.
\end{thm}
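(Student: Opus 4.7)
The plan is to follow the outline sketched in the introduction, feeding the unitarity of $\pi := \pi(\phi,\ep)$ into the inductive machinery via two successive choices of auxiliary Speh representations. I would fix $j$ with $0 \leq j < r+s$ and set $\tau$ to be the $(j{+}1)$-st building block appearing in the SZ-decomposition; thus $\tau$ is a product of $k$ copies of a single segment-type representation $\Delta_\rho[x,y]$ or $Z_\rho[-x,-y]$, and $\pi_{j+1} = \soc(\tau \rtimes \pi_j)$. The ultimate aim is to extract enough information about the Langlands data of $\pi_{j+1}$ to run the Arthur-type detection algorithm of \cite{At-Is_Arthur} and conclude.

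As a first step, I would construct an auxiliary representation $S$, namely a product of $k$ copies of a suitably chosen unitary Speh representation, designed so that $\soc(\tau \times S)$ is a product of $k$ essentially Speh representations (this is the socle-completion trick used in \cite{At-const}). Since $\pi$ is unitary, unitary induction (UI) applied to $S$ (and to the remaining building blocks sitting above $\pi$ in the SZ-decomposition) produces a unitary and hence semisimple representation; bookkeeping of socles inside $\tau \times S \rtimes \pi_j$ then forces $S \rtimes \pi_{j+1}$ and $\soc(\tau \times S) \rtimes \pi_j$ to share a common irreducible subrepresentation. This is the mechanism by which the unitarity of $\pi$ is converted into an explicit identity linking a representation built from $\pi_{j+1}$ with one built from $\pi_j$, on which the inductive hypothesis can act.

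The second step is to amplify this constraint by adjoining a longer Speh block $S'$, again a product of $k$ copies of a carefully chosen essentially Speh representation, selected so that $\soc(S' \times \tau \times S)$ is once more a product of $k$ essentially Speh representations. By the geometric lemma (Corollary \ref{TadicFormula}), and more precisely via the forthcoming Lemma \ref{geometric}, I would show that the $S$-derivative of $\soc\bigl(\soc(S' \times \tau \times S) \rtimes \pi_j\bigr)$ contains $\soc(S' \rtimes \pi_{j+1})$. Since $\pi_j$ is of Arthur type by hypothesis, M{\oe}glin's explicit construction of $A$-packets controls the relevant $\rho$-derivatives of the outer representation, and hence pins down the Langlands data of $\soc(S' \rtimes \pi_{j+1})$. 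Translating this back to $\pi_{j+1}$ and feeding it into the algorithm of \cite{At-Is_Arthur} would close the inductive step; the ``in particular'' clause then follows by iterating from $\pi_0$ up to $\pi_{r+s} = \pi(\phi_\good,\ep)$.

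The hard part will be executing this amplification cleanly. The parameters of $S$ and especially of $S'$ must be tuned simultaneously so that (i) each intermediate socle is genuinely a product of $k$ essentially Speh representations, so that M{\oe}glin's construction applies without obstruction; (ii) the $S$-derivative on the outside actually captures $\soc(S' \rtimes \pi_{j+1})$ rather than losing it to another term produced by the geometric lemma; and (iii) the resulting Langlands data is sharp enough for the Arthur-type algorithm to succeed. A secondary subtlety is that in the first step one needs a common irreducible \emph{sub}representation, not merely a common irreducible subquotient, which is precisely where the semisimplicity coming from unitary parabolic induction is indispensable; keeping these socle-versus-cosocle distinctions straight across several nested inductions is what will make the argument technically heavy, even though the underlying strategy is transparent.
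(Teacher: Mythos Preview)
Your proposal is correct and follows essentially the same approach as the paper: the two steps you describe correspond precisely to Lemma~\ref{pi'} (extracting a common irreducible subrepresentation via unitary induction and semisimplicity) and Lemma~\ref{geometric} (the $S$-derivative inequality), which together reduce Theorem~\ref{inductive} to Proposition~\ref{typeA}, finished off via M{\oe}glin's construction and the algorithm of \cite{At-Is_Arthur}. Your anticipation of the technical difficulties---tuning $S$ and $S'$, tracking socles versus subquotients, and controlling the geometric lemma terms---is accurate, and these are exactly the points where the paper's argument becomes intricate.
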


Sections \ref{sec.observe}--\ref{sec.proof_inductive}
are devoted to proving Theorem \ref{inductive}. 

\subsection{A key observation}\label{sec.observe}
We begin with a first observation.
\begin{lem}\label{pi'}
Let $\pi(\phi,\ep) \in \Irr^G$, and let 
\[
(\tau_\bad^-, \{\tau_j^-\}_{1 \leq j \leq s}, \{\tau_j^+\}_{1 \leq j \leq r}, \{\pi_j\}_{0 \leq j \leq r+s})
\]
be its SZ-decomposition. Assume that $\pi(\phi,\ep)$ is unitary.

\begin{enumerate}
\item
Assume $s > 0$ and fix $r \leq j < r + s$. 
Write $\tau_{r+s-j}^- = \Delta_\rho[B-1,-A-1]^k$ 
for some $B < 1$ and $A > 0$ with $A + B \geq 0$, where $\rho \in \Cusp^\GL$ is unitary.
Let $S$ be the product of $k$ copies of the Speh representation
\[
\begin{pmatrix}
B & \cdots & A \\
\vdots & \ddots & \vdots \\
-A & \cdots & -B
\end{pmatrix}_\rho.
\]
Then there exists an irreducible representation $\pi'_{j+1}$ such that:
\begin{itemize}
\item
$\pi'_{j+1} \hookrightarrow S \rtimes \pi_{j+1}$; and
\item
$\pi'_{j+1} \hookrightarrow \soc(\tau_{r+s-j}^- \times S) \rtimes \pi_j$.
\end{itemize}

\item
Assume $r > 0$ and fix $0 \leq j < r$. 
Write $\tau_{r-j}^+ = Z_\rho[B+1,A+1]^k$ 
for some $A \geq B \geq 0$, where $\rho \in \Cusp^\GL$ is unitary.
Let $S$ be the product of $k$ copies of the Speh representation
\[
\begin{pmatrix}
B & \cdots & A \\
\vdots & \ddots & \vdots \\
-A & \cdots & -B
\end{pmatrix}_\rho.
\]
Then there exists an irreducible representation $\pi'_{j+1}$ such that:
\begin{itemize}
\item
$\pi'_{j+1} \hookrightarrow S \rtimes \pi_{j+1}$; and
\item
$\pi'_{j+1} \hookrightarrow \soc(\tau_{r-j}^+ \times S) \rtimes \pi_j$.
\end{itemize}
\end{enumerate}
\end{lem}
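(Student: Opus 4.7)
My plan is to leverage the unitarity of $\pi$ via (UI) of Proposition \ref{list}, which forces $S \rtimes \pi$ to be semisimple, and then to track a common irreducible subrepresentation of $S \rtimes \pi_{j+1}$ and $\sigma \rtimes \pi_j$ (where $\sigma := \soc(\tau_{r+s-j}^- \times S)$) through two parallel embeddings of $\pi$ coming from the SZ-decomposition.

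First, since $\rho$ is unitary, so is the Speh representation appearing in $S$, and hence $S$ itself. By (UI), the induced module $S \rtimes \pi$ is unitary, admissible, and therefore semisimple. Second, iterating the defining socle embeddings $\pi_{i+1} \hookrightarrow \tau_{r+s-i}^- \rtimes \pi_i$ (for $r \leq i < r+s$) together with $\pi \hookrightarrow \tau_\bad^- \rtimes \pi_{r+s}$ (the outermost step of the SZ-chain), I obtain
\[
\pi \hookrightarrow \tau_\bad^- \times \tau_1^- \times \cdots \times \tau_{r+s-j-1}^- \rtimes \pi_{j+1},
\]
\[
\pi \hookrightarrow \tau_\bad^- \times \tau_1^- \times \cdots \times \tau_{r+s-j}^- \rtimes \pi_j.
\]

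Applying $S \rtimes \cdot$ to both embeddings and using the SZ-ordering conditions, which ensure that $S$ commutes (as GL-modules, up to irreducible permutation) with each of the outer factors $\tau_\bad^-, \tau_1^-, \dots, \tau_{r+s-j-1}^-$, I obtain simultaneous embeddings
\[
S \rtimes \pi \hookrightarrow (\tau_\bad^- \times \cdots \times \tau_{r+s-j-1}^-) \rtimes (S \rtimes \pi_{j+1}),
\]
\[
S \rtimes \pi \hookrightarrow (\tau_\bad^- \times \cdots \times \tau_{r+s-j-1}^-) \rtimes ((\tau_{r+s-j}^- \times S) \rtimes \pi_j).
\]
The innermost GL factor $\tau_{r+s-j}^- \times S$ has irreducible socle $\sigma$ (an essentially Speh representation obtained by a standard socle computation for products of segments and Speh representations in the GL setting), so the second embedding restricts to one into $(\tau_\bad^- \times \cdots \times \tau_{r+s-j-1}^-) \rtimes (\sigma \rtimes \pi_j)$. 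Now choose any irreducible subrepresentation $\pi''$ of the semisimple module $S \rtimes \pi$; via the two embeddings above, $\pi''$ appears as a subrepresentation of $(\tau_\bad^- \times \cdots \times \tau_{r+s-j-1}^-) \rtimes T$ for both $T = S \rtimes \pi_{j+1}$ and $T = \sigma \rtimes \pi_j$. A socle-irreducibility argument, or an explicit Jacquet module computation via Tadi\'c's formula (Corollary \ref{TadicFormula}), then isolates a common irreducible subrepresentation $\pi'_{j+1}$ of $S \rtimes \pi_{j+1}$ and $\sigma \rtimes \pi_j$, as required.

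The main obstacle will be this final descent step: extracting $\pi'_{j+1}$ from $\pi''$ in a way compatible with both inner inductions, since $\soc(\tau_{r+s-j}^- \times S)$ and $\soc(S \times \tau_{r+s-j}^-)$ need not coincide in general (one is a Zelevinsky-type socle and the other a Langlands-type one). One must carefully use either the socle-irreducibility of the outer standard induction combined with Frobenius reciprocity, or a direct Tadi\'c-formula computation, to ensure that the two descents land on the same irreducible subrepresentation. Part (2) follows by the symmetric argument: replacing $\Delta_\rho[B-1, -A-1]^k$ with $Z_\rho[B+1, A+1]^k$ and applying the dual Zelevinsky-side socle manipulations in place of the Langlands-side ones.
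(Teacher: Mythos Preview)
Your overall strategy---use unitarity to make $S \rtimes \pi$ semisimple, then descend through the SZ-chain---is the same as the paper's. However, two of your intermediate steps are not justified and, as written, are not correct.

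First, the claim that ``$S$ commutes (as GL-modules, up to irreducible permutation) with each of the outer factors $\tau_\bad^-, \tau_1^-, \dots, \tau_{r+s-j-1}^-$'' is too strong. The $\tau_i^-$ live in the same $\rho$-line as $S$, and their segments can be linked with those of the Speh, so $\tau_i^- \times S$ is typically reducible. The paper never asserts irreducibility here. Instead it works one layer at a time: at each step it passes to the Grothendieck group, where $[S \times \tau_i^- \times \cdots] = [\tau_i^- \times S \times \cdots]$ holds trivially, and then uses the SI property of $\tau_i^- \times S$ (from \cite[Corollary 4.10]{LM}) together with a Jacquet-module argument to show that the embedding of the chosen irreducible actually factors through $\soc(\tau_i^- \times S) \times \cdots$.

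Second, and more seriously, your sentence ``so the second embedding restricts to one into $(\tau_\bad^- \times \cdots) \rtimes (\sigma \rtimes \pi_j)$'' is the heart of the lemma and is unjustified. Knowing that $\sigma = \soc(\tau_{r+s-j}^- \times S)$ does not by itself force an embedding into the full induction $\tau_{r+s-j}^- \times S \rtimes \pi_j$ to factor through $\sigma \rtimes \pi_j$. The paper's mechanism for this is: it does not take ``any'' irreducible subrepresentation of $S \rtimes \pi$, but one satisfying a specific Jacquet condition $(\ast)$, namely $[\Jac(\pi''_{r+s})] \geq \tau_1^- \otimes \cdots \otimes \tau_r^+ \otimes S \otimes \tau_\bad^- \otimes \pi_0$. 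At each descent step it checks, using the SZ-ordering on the exponents (e.g.\ $B \geq 0$, $-x_1' \geq \cdots \geq -x_{r-j}' = B+1 > B$), that $[\Jac((\tau \times S)/\soc(\tau \times S)) \times \cdots]$ contains no term of the required shape; hence the embedding must factor through the socle. It then extracts $\pi''_{i-1}$, characterized via the Langlands data, and verifies that the Jacquet condition $(\ast)$ persists (with one fewer tensor factor), allowing the induction to continue. Only after reaching level $j+1$ is $\tau_\bad^-$ stripped off by Frobenius reciprocity to produce $\pi'_{j+1}$.

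In short: the ``final descent step'' you flag as the main obstacle is not where the difficulty lies. The real content is the iterated factoring-through-socle at every intermediate layer, driven by a carefully chosen Jacquet constraint that is propagated downward. Your proposal skips this entirely.
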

\begin{proof}
We only prove (2) since the proof of (1) is similar. 
(In fact, the proof of (2) essentially includes the one of (1).)
\par

We fix $0 \leq j < r$. 
Recall that we have inclusions 
\begin{align*}
\pi(\phi,\ep)
&\hookrightarrow \tau_\bad^- \rtimes \pi_{r+s} 
\\&\hookrightarrow \tau_1^- \times \tau_\bad^- \rtimes \pi_{r+s-1} 
\\ &\quad\vdots \\
&\hookrightarrow \tau_1^- \times \dots \times \tau_s^- \times \tau_\bad^- \rtimes \pi_r 
\\&\hookrightarrow \tau_1^- \times \dots \times \tau_s^- \times \tau_1^+ \times \tau_\bad^- \rtimes \pi_{r-1} 
\\ &\quad\vdots 
\\&\hookrightarrow 
\tau_1^- \times \dots \times \tau_s^- \times \tau_1^+ \times \dots \times \tau_r^+ 
\times \tau_\bad^- \rtimes \pi_0.
\end{align*}
Consider the unitary induction $S \rtimes \pi(\phi,\ep)$.
We take an irreducible subquotient $\pi''_{r+s}$ of $S \rtimes \pi(\phi,\ep)$ such that 
\[
\tag{$\ast$}
[\Jac(\pi''_{r+s})] \geq 
\tau_1^- \otimes \dots \otimes \tau_s^- \otimes \tau_1^+ \otimes \dots \otimes \tau_r^+ 
\otimes S \otimes \tau_\bad^- \otimes \pi_0.
\]
Since $\pi(\phi,\ep)$ is unitary, so is $S \rtimes \pi(\phi,\ep)$, and hence it is semisimple.
Therefore $\pi''_{r+s} \hookrightarrow S \rtimes \pi(\phi,\ep)$. 
\par

Recall that $\tau_1^- \times S$ is SI by \cite[Corollary 4.10]{LM}.
We have 
\begin{align*}
[\pi''_{r+s}] 
&\leq [S \times \tau_\bad^- \rtimes \pi_{r+s}] 
\\&\leq [S \times \tau_1^- \times \tau_\bad^- \rtimes \pi_{r+s-1}]
\\&= [\tau_1^- \times S \times \tau_\bad^- \rtimes \pi_{r+s-1}]
\\&= [\soc(\tau_1^- \times S) \times \tau_\bad^- \rtimes \pi_{r+s-1}]
+\left[\frac{\tau_1^- \times S}{\soc(\tau_1^- \times S)} \times \tau_\bad^- \rtimes \pi_{r+s-1}\right].
\end{align*}
Note that
\[
\left[\Jac\left(\frac{\tau_1^- \times S}{\soc(\tau_1^- \times S)}\right)\right]
\]
has no irreducible subquotient of the form $\tau_1^- \otimes S'$ for any $S'\not= 0$.
Recall that we denote $\tau_i^- = \Delta_{\rho_i}[x_i,y_i]^{k_i}$ 
and that $x_1 \leq \dots \leq x_s $ are all negative.
Since $B \geq 0$, by construction, 
we see that 
\[
\left[\Jac\left(\frac{\tau_1^- \times S}{\soc(\tau_1^- \times S)} \times \tau_\bad^- \rtimes \pi_{r+s-1}\right)\right]
\]
has no irreducible subquotient of the form 
$\tau_1^- \otimes \dots \otimes \tau_s^- \otimes \tau_1^+ \otimes \dots \otimes \tau_{r}^+ 
\otimes S \otimes \sigma$ for any $\sigma \not= 0$.
Hence the composition map
\[
\pi''_{r+s} \hookrightarrow S \times \tau_\bad^- \rtimes \pi_{r+s} 
\hookrightarrow S \times \tau_1^- \times \tau_\bad^- \rtimes \pi_{r+s-1}
\]
factors through 
\[
\pi''_{r+s} 
\hookrightarrow \soc(\tau_1^- \times S) \times \tau_\bad^- \rtimes \pi_{r+s-1} 
\hookrightarrow \tau_1^- \times S \times \tau_\bad^- \rtimes \pi_{r+s-1}. 
\]
In particular, there is an irreducible subquotient $\pi''_{r+s-1}$ of $S \times \tau_\bad^- \rtimes \pi_{r+s-1}$ 
such that 
\[
\pi''_{r+s} \hookrightarrow \tau_1^- \rtimes \pi''_{r+s-1}.
\] 
By construction, the Langlands data of $\pi''_{r+s}$ are obtained by 
adding $\tau_1^- = \Delta_{\rho_1}[x_1, y_1]^{k_1}$ to those of $\pi''_{r+s-1}$. 
Thus, $\pi''_{r+s-1}$ is uniquely characterized by the condition
\[
[\Jac(\pi''_{r+s})] = \tau_1^- \otimes (m \cdot \pi''_{r+s-1}) 
+ \sum_{\substack{\tau \in \Irr(\GL_{n_1}(F)) \\ \tau \not\cong \tau_1^-}} 
\tau \otimes \Pi_\tau,
\]
for some representation $\Pi_\tau$ and some multiplicity $m > 0$, 
where $n_1$ is such that $\tau_1^- \in \Irr(\GL_{n_1}(F))$.
By applying Frobenius reciprocity to the inclusion map $\pi''_{r+s} \hookrightarrow \tau_1^- \times S \times \tau_\bad^- \rtimes \pi_{r+s-1}$, 
we have a nonzero equivariant map
\[
\tau_1^- \boxtimes \pi''_{r+s-1} \rightarrow \tau_1^- \boxtimes (S \times \tau_\bad^- \rtimes \pi_{r+s-1}).
\]
Since $\pi''_{r+s-1}$ is irreducible, 
we obtain an inclusion $\pi''_{r+s-1} \hookrightarrow S \times \tau_\bad^- \rtimes \pi_{r+s-1}$.
On the other hand, by $(\ast)$, 
we see that 
\[
[\Jac(\pi''_{r+s-1})] \geq 
\tau_2^- \otimes \dots \otimes \tau_s^- \otimes \tau_1^+ \otimes \dots \otimes \tau_r^+ 
\otimes S \otimes \tau_\bad^- \otimes \pi_0.
\]
\par

Repeating this argument, for $0 \leq i \leq s$, 
there is an irreducible representation $\pi''_{r+s-i}$ such that 
\begin{itemize}
\item
$\pi''_{r+s-i} \hookrightarrow S \times \tau_\bad^- \rtimes \pi_{r+s-i}$; and
\item
$[\Jac(\pi''_{r+s-i})] \geq 
\underbrace{\tau_{i+1}^- \otimes \dots \otimes \tau_s^-}_{s-i} 
\otimes \tau_1^+ \otimes \dots \otimes \tau_r^+ 
\otimes S \otimes \tau_\bad^- \otimes \pi_0$.
\end{itemize}
In particular, considering $i=s$, we have an irreducible representation $\pi''_r$ such that 
\begin{itemize}
\item
$\pi''_r \hookrightarrow S \times \tau_\bad^- \rtimes \pi_r$; and
\item 
$[\Jac(\pi''_r)] \geq 
\tau_1^+ \otimes \dots \otimes \tau_{r}^+ \otimes S \otimes \tau_\bad^- \otimes \pi_0.$
\end{itemize}
\par

By the same argument, 
we have 
\begin{align*}
[\pi''_r] 
&\leq [S \times \tau_\bad^- \rtimes \pi_r] 
\\&\leq [S \times \tau_1^+ \times \tau_\bad^- \rtimes \pi_{r-1}]
\\&= [\tau_1^+ \times S \times \tau_\bad^- \rtimes \pi_{r-1}]
\\&= [\soc(\tau_1^+ \times S) \times \tau_\bad^- \rtimes \pi_{r-1}]
+\left[\frac{\tau_1^+ \times S}{\soc(\tau_1^+ \times S)} \times \tau_\bad^- \rtimes \pi_{r-1}\right].
\end{align*}
Recall that $\tau_i^+ = Z_{\rho_i'}[-x'_i,-y'_i]^{k'_i}$ 
with $-x_1' \geq \dots \geq -x'_{r-j} = B+1 > B$.
Hence 
\[
\Jac\left(\frac{\tau_1^+ \times S}{\soc(\tau_1^+ \times S)} \rtimes \pi_{r-1}\right)
\]
has no irreducible representation of the form 
$\tau_1^+ \otimes \dots \otimes \tau_r^+ \otimes S \otimes \sigma$ 
for any $\sigma \not= 0$.
Hence the composition map 
\[
\pi''_r \hookrightarrow S \times \tau_\bad^- \rtimes \pi_r 
\hookrightarrow S \times \tau_1^+ \times \tau_\bad^- \rtimes \pi_{r-1}
\]
factors through 
\[
\pi''_r \hookrightarrow \soc(\tau_1^+ \times S) \times \tau_\bad^- \rtimes \pi_{r-1}
\hookrightarrow \tau_1^+ \times S \times \tau_\bad^- \rtimes \pi_{r-1}, 
\]
and we can find an irreducible subquotient $\pi''_{r-1}$ of $S \times \tau_\bad^- \rtimes \pi_{r-1}$ such that 
\[
\pi''_r \hookrightarrow \tau_1^+ \rtimes \pi''_{r-1}. 
\]
By applying Frobenius reciprocity to 
$\pi''_r \hookrightarrow \tau_1^+ \times S \times \tau_\bad^- \rtimes \pi_{r-1}$, 
we obtain an inclusion $\pi''_{r-1} \hookrightarrow S \times \tau_\bad^- \rtimes \pi_{r-1}$.
Moreover, $\pi''_{r-1}$ satisfies that 
\[
[\Jac(\pi''_{r-1})] \geq 
\tau_2^+ \otimes \dots \otimes \tau_r^+ \otimes S \otimes \tau_\bad^- \otimes \pi_0. 
\]
\par

This argument can be repeated for $0 \leq i \leq r-j-1$,  
and we obtain an irreducible representation $\pi''_{r-i}$ such that 
\begin{itemize}
\item
$\pi''_{r-i} \hookrightarrow S \times \tau_\bad^- \rtimes \pi_{r-i}$; and
\item
$[\Jac(\pi''_{r-i})] \geq 
\underbrace{\tau_{i+1}^+ \otimes \dots \otimes \tau_{r}^+}_{r-i} 
\otimes S \otimes \tau_\bad^- \otimes \pi_0.$
\end{itemize}
Moreover, by the argument for $i=r-j-1$, 
we see that 
\[
\pi''_{j+1} \hookrightarrow S \times \tau_\bad^- \rtimes \pi_{j+1} 
\hookrightarrow S \times \tau_{r-j}^+ \times \tau_\bad^- \rtimes \pi_j
\]
factors through $\pi''_{j+1} \hookrightarrow \soc(\tau_{r-j}^+ \times S) \times \tau_\bad^- \rtimes \pi_j$. 
\par

Finally, let $\pi'_{j+1} \in \Irr^G$ be such that $\pi''_{j+1} = \soc(\tau_\bad^- \rtimes \pi'_{j+1})$. 
Applying Frobenius reciprocity again to the embeddings 
\[
\pi''_{j+1} \hookrightarrow \tau_\bad^- \times S \rtimes \pi_{j+1} 
\quad \text{and} \quad 
\pi''_{j+1} \hookrightarrow \tau_\bad^- \times \soc(\tau_{r-j}^+ \times S) \rtimes \pi_j,
\]
we obtain inclusions 
\[
\pi'_{j+1} \hookrightarrow S \rtimes \pi_{j+1} 
\quad \text{and} \quad 
\pi'_{j+1} \hookrightarrow \soc(\tau_{r-j}^+ \times S) \rtimes \pi_j.
\]
This completes the proof of Lemma~\ref{pi'}.
\end{proof}

The existence of the representation $\pi'_{j+1}$ in Lemma~\ref{pi'} will be used to show that $\pi_{j+1}$ is of Arthur type, assuming that $\pi_j$ is of Arthur type. 
To simplify the notation in what follows, we denote 
\[
\pi_j = \sigma, \quad \pi_{j+1} = \pi, \quad \pi'_{j+1} = \pi'.
\]
Theorem~\ref{inductive} is thus reduced to the following proposition.

\begin{prop}\label{typeA}
Let $\sigma, \pi, \pi' \in \Irr^G$ be of good parity. 
Assume that $\sigma$ is of Arthur type. 
\begin{enumerate}
\item
Fix half-integers $B < 1$ and $A > 0$ such that $A+B \geq 0$. 
Set $\tau = \Delta_\rho[B-1,-A-1]^k$
and
let $S$ be the product of $k$ copies of the Speh representation
\[
\begin{pmatrix}
B & \cdots & A \\
\vdots & \ddots & \vdots \\
-A & \cdots & -B
\end{pmatrix}_\rho,
\]
where $\rho \in \Cusp^\GL$ is unitary.
Suppose that 
\begin{itemize}
\item
if $D_{\rho|\cdot|^x}(\sigma) \not= 0$, then $x \geq B-1$, 
and $D_{\rho|\cdot|^{-A-1}} \circ \dots \circ D_{\rho|\cdot|^{B-1}}(\sigma) = 0$; 
\item
$\pi = \soc(\tau \rtimes \sigma)$;
\item
$\pi' \hookrightarrow S \rtimes \pi$ and $\pi' \hookrightarrow \soc(\tau \times S) \rtimes \sigma$, 
i.e., 
\[
S \rtimes \soc(\tau \rtimes \sigma)
\quad\text{and}\quad
\soc(\tau \times S) \rtimes \sigma
\]
have a common irreducible subrepresentation. 
\end{itemize}
Then $\pi$ is also of Arthur type. 

\item
Fix half-integers $A \geq B > -1$. 
Set $\tau = Z_\rho[B+1,A+1]^k$ 
and
let $S$ be the product of $k$ copies of the Speh representation
\[
\begin{pmatrix}
B & \cdots & A \\
\vdots & \ddots & \vdots \\
-A & \cdots & -B
\end{pmatrix}_\rho,
\]
where $\rho \in \Cusp^\GL$ is unitary.
Suppose that 
\begin{itemize}
\item
if $D_{\rho|\cdot|^x}(\sigma) \not= 0$, then $x \leq B+1$, 
and $D_{\rho|\cdot|^{A+1}} \circ \dots \circ D_{\rho|\cdot|^{B+1}}(\sigma) = 0$; 
\item
$\pi = \soc(\tau \rtimes \sigma)$;
\item
$\pi' \hookrightarrow S \rtimes \pi$ and $\pi' \hookrightarrow \soc(\tau \times S) \rtimes \sigma$, 
i.e., 
\[
S \rtimes \soc(\tau \rtimes \sigma)
\quad\text{and}\quad
\soc(\tau \times S) \rtimes \sigma
\]
have a common irreducible subrepresentation. 
\end{itemize}
Then $\pi$ is also of Arthur type. 
\end{enumerate}
\end{prop}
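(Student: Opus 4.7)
The plan is to focus on case (1); case (2) follows by a symmetric argument with $Z$-segments in place of $\Delta$-segments, using the analogue of M\oe glin's construction after applying the Aubert involution. Since $\sigma$ is of Arthur type of good parity, M\oe glin's explicit construction (recalled in Section \ref{sec.Moe}) gives precise control over the $\rho$-derivatives of $\sigma$, as well as of products $S' \rtimes \sigma$ when $S'$ is built from Speh representations compatible with the parameter of $\sigma$. The first step is to construct $S'$, a product of $k$ copies of a suitably ``long'' essentially Speh representation, chosen so that $\soc(S' \times \tau \times S)$ is itself a product of $k$ essentially Speh representations. This amounts to a combinatorial manipulation of segments in the style of \cite{LM}, relying on the constraints $B < 1$ and $A+B \geq 0$, together with the assumed gap condition $D_{\rho|\cdot|^{-A-1}} \circ \cdots \circ D_{\rho|\cdot|^{B-1}}(\sigma) = 0$.

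With $S'$ in hand, I would propagate the common-subrepresentation hypothesis. The inclusions
\[
\pi' \hookrightarrow S \rtimes \pi, \qquad \pi' \hookrightarrow \soc(\tau \times S) \rtimes \sigma
\]
combined with Frobenius reciprocity and the socle-irreducibility of the relevant products give a common irreducible subrepresentation of $S' \times S \rtimes \pi$ and $S' \times \soc(\tau \times S) \rtimes \sigma$. Using the chain of identifications $\soc(S' \times \tau \times S) = \soc\bigl(S' \times \soc(\tau \times S)\bigr) = \soc\bigl(\soc(S' \times \tau) \times S\bigr)$, this upgrades to a common irreducible subrepresentation of $S' \times S \rtimes \pi$ and $\soc(S' \times \tau \times S) \rtimes \sigma$. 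Taking socles and invoking Lemma \ref{geometric}, the $S$-derivative of $\soc\bigl(\soc(S' \times \tau \times S) \rtimes \sigma\bigr)$ then contains $\soc(S' \rtimes \pi)$.

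The next step is to extract Langlands data for $\soc(S' \rtimes \pi)$ from this containment. Because $\sigma$ is of Arthur type and $\soc(S' \times \tau \times S)$ is a product of Speh representations, M\oe glin's construction describes every irreducible constituent of $\soc(S' \times \tau \times S) \rtimes \sigma$, along with its $A$-parameter. The $S$-derivative on that side can therefore be computed via the Geometric Lemma (Theorem \ref{GeometricLemma}) and Tadi\'c's formula (Corollary \ref{TadicFormula}), producing explicit constraints on the Langlands data $(\mm; \pi_\temp)$ of $\pi$ and on the character $\ep$ for which $\pi = \pi(\phi, \ep)$. Feeding these constraints into the algorithm of \cite{At-Is_Arthur}, which tests whether a representation with prescribed Langlands data is of Arthur type, should then yield an $A$-parameter $\psi \in \Psi(G)$ with $\pi \in \Pi_\psi$.

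The main obstacle is the last step: the algorithm of \cite{At-Is_Arthur} requires the Langlands data in a specific normal form, and matching the data read off from the derivative computation against this form will require genuine combinatorial work, as well as a careful non-vanishing analysis for the $\rho$-derivatives of $\sigma$ appearing in Tadi\'c's formula. A secondary technical point is the uniform choice of $S'$ that makes the socle-irreducibility statements for $S' \times \tau$, $S' \times S$, and $S' \times \tau \times S$ simultaneously valid in both cases (1) and (2); here the precise interaction between $A$, $B$, and the range of nonzero derivatives of $\sigma$ is essential.
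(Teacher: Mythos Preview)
Your proposal is correct and follows essentially the same route as the paper (which proves (2) and deduces (1) by Aubert duality, but otherwise proceeds as you outline): invoke Lemma \ref{geometric}, describe the irreducible summands of $\soc(\soc(S' \times \tau \times S) \rtimes \sigma)$ via M\oe glin's construction (the precise input is \cite[Theorem 1.1]{At-socle}, which gives each summand an explicit extended multi-segment $\EE''$), and then feed the resulting constraints into \cite{At-Is_Arthur}. The obstacle you correctly flag is handled by two dedicated results, Lemma \ref{remaining} (showing that the Langlands data $\mm_1$ and $(\phi_1,\ep|_{\phi_1})$ contributed by the $S'$-part of $\EE''$ survive in $\soc(S' \rtimes \pi)$ after applying $D_S$) and Proposition \ref{changingE} (a combinatorial rearrangement of the extended multi-segment for $\sigma$ so that \cite[Algorithm 3.3]{At-Is_Arthur} applies directly); note also that your ``propagation'' paragraph is superfluous, since Lemma \ref{geometric} already delivers the inequality $D_S(\soc(\soc(S' \times \tau \times S) \rtimes \sigma)) \geq \soc(S' \rtimes \pi)$ straight from the hypotheses of the proposition.
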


The proof of Proposition \ref{typeA} will be completed in Section \ref{sec.proof_inductive}.
The next subsection is the critical step of the proof. 

\subsection{Inequalities of Jacquet modules}\label{sec.inequalities}
To attack Proposition \ref{typeA}, in this subsection, 
we establish some inequalities of Jacquet modules. 
\par

Define a sequence $\{x_j\}_{1 \leq j \leq ab}$ with $a = A+B+1$ and $b = A-B+1$ by
\[
\Sp(\rho,a,b) = 
\begin{pmatrix}
B & \ldots & A\\
\vdots & \ddots & \vdots \\
-A & \ldots & -B
\end{pmatrix}_\rho
=
\begin{pmatrix}
x_{1} & \ldots & x_{a(b-1)+1}\\
x_{2} & \ldots & x_{a(b-1)+2}\\
\vdots & \ddots & \vdots \\
x_{a} & \ldots & x_{ab}
\end{pmatrix}_\rho.
\]
Hence $-A \leq x_j \leq A$ for $1 \leq j \leq ab$.
To $S = \Sp(\rho,a,b)^k$, we attach an operator $D_S$ defined by
\[
D_S = D_{\rho|\cdot|^{x_{ab}}}^{(k)} \circ \dots \circ D_{\rho|\cdot|^{x_1}}^{(k)}.
\]
\par

\begin{rem}\label{Max}
One might (wrongly) expect that the operator $D_S$ distinguishes irreducible representations $\tau$ 
containing $S \otimes \tau'$ for some $\tau' \in \Irr^G$, 
in the sense that $D_S(\tau) \neq 0$ if and only if $S \otimes \tau' \leq [\Jac(\tau)]$ for some $\tau' \not=0$. 
However, this is not the case, even when $k = 1$. 
We are grateful to Max Gurevich for providing us with a counterexample to this assertion.
\par

For simplicity, let $\rho = \1_{\GL_1(F)}$. 
Let $S = \Sp(\rho,4,4)$ 
and consider the multisegment $\mathfrak{m} = [-1,-2]_\rho + [0,0]_\rho + [1,-1]_\rho + [2,1]_\rho$. 
The representation $L(\mathfrak{m}) \in \Irr(\GL_8(F))$ was studied by B. Leclerc \cite{Lec}, 
and is an example of what is called in \cite{LM2} a \emph{non-square irreducible representation}.
\par

We claim that $D_S(L(\mathfrak{m}|\cdot|^{-1} + \mathfrak{m}|\cdot|^1)) \neq 0$. 
Computer-based calculations show that: 
\begin{itemize}
\item
$D_S(L(\mathfrak{m}|\cdot|^{-1}) \times L(\mathfrak{m}|\cdot|^1)) = \C^2$; 
\item
$L(\mathfrak{m}|\cdot|^{-1}) \times L(\mathfrak{m}|\cdot|^1)$ has length $4$; 
\item
$S$ appears with multiplicity one in $L(\mathfrak{m}|\cdot|^{-1}) \times L(\mathfrak{m}|\cdot|^1)$; 
\item
For the two irreducible representations 
$\tau_1, \tau_2 \leq [L(\mathfrak{m}|\cdot|^{-1}) \times L(\mathfrak{m}|\cdot|^1)]$
other than $S$ and $L(\mathfrak{m}|\cdot|^{-1} + \mathfrak{m}|\cdot|^1)$, 
we have $D_S(\tau_1) = D_S(\tau_2) = 0$.
\end{itemize}
Hence we conclude that $D_S(L(\mathfrak{m}|\cdot|^{-1} + \mathfrak{m}|\cdot|^1)) \neq 0$. 
\end{rem}

Using the operator $D_S$, we state the following lemma. 
This is the most technical part of the proof of Theorem \ref{unitary_gp}.
It appears naturally in M{\oe}glin's explicit construction of $A$-packets (see Section \ref{sec.Moe}).

\begin{lem}\label{geometric}
Let $\sigma, \pi, \pi' \in \Irr^G$ be of good parity. 
Assume that $\sigma$ is of Arthur type. 
\begin{enumerate}
\item
Suppose that we are in the situation of Proposition \ref{typeA} (1). 
Fix integers $t_1, \dots, t_k > 1$, and  set 
\[
S' = \bigtimes_{i=1}^k
\begin{pmatrix}
B-t_i & \ldots & B-2 \\
\vdots & \ddots & \vdots \\
-A-t_i & \ldots &-A-2
\end{pmatrix}_\rho.
\]
Then 
\[
D_S\left(\soc(\soc(S' \times \tau \times S) \rtimes \sigma)\right)
\geq \soc(S' \rtimes \pi).
\]

\item
Suppose that we are in the situation of Proposition \ref{typeA} (2). 
Fix integers $t_1, \dots, t_k > 1$, and  set 
\[
S' = \bigtimes_{i=1}^k
\begin{pmatrix}
B+t_i & \ldots & A+t_i \\
\vdots & \ddots & \vdots \\
B+2 & \ldots &A+2
\end{pmatrix}_\rho.
\] 
Then 
\[
D_S\left(\soc(\soc(S' \times \tau \times S) \rtimes \sigma)\right)
\geq \soc(S' \rtimes \pi).
\]
\end{enumerate}
\end{lem}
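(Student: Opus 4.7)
I focus on part (1); part (2) follows by a parallel argument, with the Aubert involution exchanging $\Delta$-segments and $Z$-segments and reversing the direction of the derivatives.

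The plan is to exhibit an irreducible representation $\pi''$ that simultaneously embeds into $S' \times S \rtimes \pi$ and into $\soc(S' \times \tau \times S) \rtimes \sigma$, and to exploit both embeddings. Starting from the irreducible $\pi' \in \Irr^G$ supplied by the hypothesis of Proposition \ref{typeA}, which satisfies $\pi' \hookrightarrow S \rtimes \pi$ and $\pi' \hookrightarrow \soc(\tau \times S) \rtimes \sigma$, let $\pi''$ be any irreducible subrepresentation of $S' \rtimes \pi'$. Inducing $S' \rtimes (\cdot)$ on the two embeddings of $\pi'$ yields
\[
\pi'' \hookrightarrow S' \times S \rtimes \pi, \qquad
\pi'' \hookrightarrow S' \times \tau \times S \rtimes \sigma.
\]
Next, by an analysis of $[\Jac(\pi'')]$ via Tadi\'c's formula, mirroring the factorization step in the proof of Lemma \ref{pi'}, I would show that the second embedding factors through $\soc(S' \times \tau \times S) \rtimes \sigma$. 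The choice of shifts $t_i > 1$ makes the underlying multisegment of $S' \times \tau \times S$ a ladder, so $\soc(S' \times \tau \times S)$ is SI by the theory of Lapid--M\'inguez. The derivative vanishing hypothesis on $\sigma$ (i.e.\ $D_{\rho|\cdot|^x}(\sigma) = 0$ for $x < B-1$ together with $D_{\rho|\cdot|^{-A-1}} \circ \cdots \circ D_{\rho|\cdot|^{B-1}}(\sigma) = 0$) is precisely what is needed to rule out Jacquet-module contributions from the non-socle irreducible subquotients of $S' \times \tau \times S$.

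Finally, I would compute $D_S(\pi'')$ via the first embedding. By Frobenius reciprocity, the Jacquet module of $\pi''$ along the parabolic stripping off $S' \times S$ contains a copy of $(S' \times S) \otimes \pi$. Running $D_S$ column by column and applying Tadi\'c's formula at each step, the derivative hypotheses on $\sigma$, transferred to $\pi = \soc(\tau \rtimes \sigma)$, force the only surviving path to be the one that removes each exponent of $S$ from the $S$-factor on the general linear side. This produces $\soc(S' \rtimes \pi)$ inside $D_S(\pi'')$, and combined with the factorization in the previous step, gives
\[
D_S\bigl(\soc(\soc(S' \times \tau \times S) \rtimes \sigma)\bigr) \geq D_S(\pi'') \geq \soc(S' \rtimes \pi).
\]
The principal obstacle is controlling this iterated derivative: for each column index $x_j$ and each of its $k$ copies, one must verify via a careful Zelevinsky-style combinatorial analysis that no alternative path in Tadi\'c's formula can contribute. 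This relies essentially on the derivative hypotheses on $\sigma$, on the large shifts $t_i > 1$ that separate $S'$ from $S$, and on the ladder structure of the socles involved.
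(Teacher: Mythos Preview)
Your overall plan --- construct $\pi'' \hookrightarrow S' \rtimes \pi'$, factor through $\soc(S' \times \tau \times S) \rtimes \sigma$, and then compute $D_S(\pi'')$ via the embedding into $S' \times S \rtimes \pi$ --- is in the right spirit, and the factorization step is indeed close to what the paper does. But the last step, the actual control of $D_S(\pi'')$, has a genuine gap that your sketch does not address.

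Concretely, your inductive descent would run as follows: from $\pi_{j-1}'' \hookrightarrow S' \times L_{j-1} \rtimes \pi$ and $L_{j-1} \hookrightarrow (\rho|\cdot|^{x_j})^k \times L_j$, you want to pull $(\rho|\cdot|^{x_j})^k$ to the front and apply Frobenius. This requires $(\rho|\cdot|^{x_j})^k \times S' \cong S' \times (\rho|\cdot|^{x_j})^k$. But take $x_j = B-1$, which occurs whenever $a = A+B+1 \geq 2$: the top column of each factor of $S'$ is $\Delta_\rho[B-2,-A-2]$, and $[B-1,B-1]$ is linked to it with no adjacent segment above to block the reduction, so $\rho|\cdot|^{B-1} \times S'$ is reducible. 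The ``derivative hypotheses on $\sigma$, transferred to $\pi$'' do not help here: $\pi = \soc(\tau \rtimes \sigma)$ has $D_{\rho|\cdot|^{B-1}}(\pi) \neq 0$, so Tadi\'c's formula genuinely produces competing paths, and you have no mechanism to single out the one landing in $S' \times L_j \rtimes \pi$.

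The paper resolves this by an auxiliary result (its Lemma~\ref{geometric2}) which you are missing entirely. It first builds the sequence $\pi' = \pi_0, \pi_1, \dots, \pi_{ab} = \pi$ with $\pi_j \hookrightarrow L_j \rtimes \pi$ and $\pi_j \leq D_{\rho|\cdot|^{x_j}}^{(k)}(\pi_{j-1})$, and then proves that each $\pi_j$ is of the form $\soc(\tau \rtimes \sigma_j)$ for some $\sigma_j \in \Irr^G$ with $\sigma_j \leq D_{\rho|\cdot|^{x_j}}^{(k)}(\sigma_{j-1})$. The proof of this is the technical heart of the lemma: it constructs an explicit nonzero map $S \rtimes \pi_j \to L_j \rtimes \pi'$ by chasing a commutative diagram coming from the functoriality of the Geometric Lemma together with the realization of the intertwining isomorphism $S \times L_j \rtimes \pi \cong L_j \times S \rtimes \pi$ as a Jacquet integral. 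This map, composed with $\pi' \hookrightarrow \soc(\tau \times S) \rtimes \sigma$, forces $\tau$ to appear in the Jacquet module of $\pi_j$, whence $\sigma_j$ exists.

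With the $\sigma_j$ in hand, the paper tracks $\soc(\soc(S' \times \tau) \rtimes \sigma_j)$ through the derivatives rather than $\soc(S' \times L_j \rtimes \pi)$. This is exactly what fixes the commutation problem: $\soc(S' \times \tau)$ carries the extra segment $[B-1,-A-1]$ from $\tau$, which fills the gap between $S'$ and $S$. With that segment present, $\soc(S' \times \tau) \times (\rho|\cdot|^{x_j})^k$ \emph{is} irreducible for every $x_j \neq B$, and the exceptional case $x_j = B$ is handled separately via the identification $\soc(\soc(S' \times \tau) \rtimes \sigma_{j-1}) = \soc(S' \rtimes \pi_{j-1})$. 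In short, the $\tau$-factor --- accessed through the $\sigma_j$ --- is not optional bookkeeping but precisely the device that makes each inductive step go through; your direct approach with $S'$ alone does not have a substitute for it.
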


Note that the assertions (1) and (2) in Lemma \ref{geometric} (or Proposition \ref{typeA}) 
are equivalent to each other by taking Aubert duality.
In the rest of this subsection, we suppose that we are in the situation of (1).
\par

First, we show that there is $\pi_j \in \Irr^G$ for $0 \leq j \leq ab$ such that
\begin{itemize}
\item
$\pi_0 = \pi'$ and $\pi_{ab} = \pi$; 
\item
for $0 \leq j \leq ab$, we have
$\pi_j \hookrightarrow L_j \rtimes \pi$, 
where we set
\[
L_j = D_{\rho|\cdot|^{x_j}}^{(k)} \circ \dots \circ D_{\rho|\cdot|^{x_1}}^{(k)}(S) \in \Irr^\GL;
\]
\item
$\pi_j \leq D_{\rho|\cdot|^{x_j}}^{(k)}(\pi_{j-1})$ for $1 \leq j \leq ab$.
\end{itemize}
Indeed, by induction, we assume the existence of $\pi_{j-1}$. Then 
\begin{align*}
\pi_{j-1} &\hookrightarrow L_{j-1} \rtimes \pi
\hookrightarrow (\rho|\cdot|^{x_j})^k \times L_j \rtimes \pi.
\end{align*}
By Frobenius reciprocity, we have a nonzero map 
\[
\Jac(\pi_{j-1}) \rightarrow (\rho|\cdot|^{x_j})^k \boxtimes (L_j \rtimes \pi).
\]
Hence there is an irreducible subrepresentation $\pi_j$ of 
$L_j \rtimes \pi$
such that $(\rho|\cdot|^{x_j})^k \boxtimes \pi_j$ is a subrepresentation of the image of this map.
Therefore $D_{\rho|\cdot|^{x_j}}^{(k)}(\pi_{j-1}) \geq \pi_j$.
In the particular case when $j=ab$, we find $\pi_{ab} \hookrightarrow \pi$, so we get $\pi_{ab} = \pi$.

\begin{lem}\label{geometric2}
For $0 \leq j \leq ab$, 
there is $\sigma_j \in \Irr^G$ such that $\pi_j = \soc(\tau \rtimes \sigma_j)$. 
In particular, $\sigma_{ab} = \sigma$.
Moreover, $\sigma_j \leq D_{\rho|\cdot|^{x_j}}^{(k)}(\sigma_{j-1})$ holds for $1 \leq j \leq ab$. 
\end{lem}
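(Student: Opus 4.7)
The plan is to proceed by upward induction on $j$, from $j=0$ to $j=ab$, building the $\sigma_j$'s successively and verifying at each stage both that $\pi_j = \soc(\tau \rtimes \sigma_j)$ and that $\sigma_j \leq D_{\rho|\cdot|^{x_j}}^{(k)}(\sigma_{j-1})$ (for $j \geq 1$). The equality $\sigma_{ab} = \sigma$ will then follow from the uniqueness of the $\sigma_j$ in the socle-irreducible decomposition $\pi_{ab} = \pi = \soc(\tau \rtimes \sigma)$ guaranteed by the hypothesis.

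For the base case $j = 0$, the assumptions on $\pi'$ give inclusions $\pi' = \pi_0 \hookrightarrow \soc(\tau \times S) \rtimes \sigma \hookrightarrow \tau \rtimes (S \rtimes \sigma)$. Since $\pi'$ is irreducible, Frobenius reciprocity produces an irreducible subquotient $\sigma_0$ of $S \rtimes \sigma$ with $\pi' \hookrightarrow \tau \rtimes \sigma_0$. The derivative hypothesis $D_{\rho|\cdot|^{-A-1}} \circ \dots \circ D_{\rho|\cdot|^{B-1}}(\sigma) = 0$, together with an analysis of the $\rho$-derivatives of the Speh representation $S$, controls the $\rho|\cdot|^{x}$-derivatives of $\sigma_0$ in the range $x \geq B-1$; this is precisely the criterion ensuring that $\tau \rtimes \sigma_0$ is socle irreducible, so $\pi' = \soc(\tau \rtimes \sigma_0)$ as required.

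For the inductive step, assume $\pi_{j-1} = \soc(\tau \rtimes \sigma_{j-1})$. Frobenius reciprocity gives $\tau \boxtimes \sigma_{j-1} \leq [\Jac(\pi_{j-1})]$, and by the construction of $\pi_j$ we also have $(\rho|\cdot|^{x_j})^k \boxtimes \pi_j \leq [\Jac(\pi_{j-1})]$. I would then apply Tadi\'c's formula to compute $[D_{\rho|\cdot|^{x_j}}^{(k)}(\tau \rtimes \sigma_{j-1})]$: its ``main term'' is $\tau \rtimes D_{\rho|\cdot|^{x_j}}^{(k)}(\sigma_{j-1})$, while the cross-terms can only arise when $\rho|\cdot|^{x_j}$ coincides with the leftmost exponent $B-1$ of the segment underlying $\tau$. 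From this one extracts an irreducible $\sigma_j \leq D_{\rho|\cdot|^{x_j}}^{(k)}(\sigma_{j-1})$ with $\pi_j \hookrightarrow \tau \rtimes \sigma_j$, and a socle-irreducibility check (again using the propagation of the derivative constraint from $\sigma_{j-1}$ to $\sigma_j$) yields $\pi_j = \soc(\tau \rtimes \sigma_j)$.

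The main obstacle will be the bookkeeping of the cross-terms in Tadi\'c's formula at those indices $j$ for which $x_j = B-1$ (so that $D_{\rho|\cdot|^{x_j}}(\tau) \neq 0$). One must rule out that $\pi_j$ can arise from these terms, which I plan to do by exploiting the explicit column-by-column order $x_1, x_2, \dots, x_{ab}$ of exponents read off the Speh matrix, combined with the embedding $\pi_j \hookrightarrow L_j \rtimes \pi$: the supports and multiplicities of characters already ``consumed'' by the previous derivatives $D_{\rho|\cdot|^{x_1}}^{(k)}, \dots, D_{\rho|\cdot|^{x_{j-1}}}^{(k)}$ restrict where $\pi_j$ can sit inside $[D_{\rho|\cdot|^{x_j}}^{(k)}(\tau \rtimes \sigma_{j-1})]$, forcing its contribution to come from the main term.
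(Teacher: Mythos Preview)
Your base case $j=0$ is fine: from $\pi' \hookrightarrow \soc(\tau\times S)\rtimes\sigma \hookrightarrow \tau\rtimes(S\rtimes\sigma)$ you genuinely get an embedding $\pi'\hookrightarrow\tau\rtimes\sigma_0$ for some irreducible subquotient $\sigma_0$ of $S\rtimes\sigma$, and socle irreducibility of $\tau\rtimes\sigma_0$ then identifies $\pi'$ with the socle.

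The inductive step, however, has a real gap. From $\pi_{j-1}=\soc(\tau\rtimes\sigma_{j-1})$ and $\pi_j\leq D_{\rho|\cdot|^{x_j}}^{(k)}(\pi_{j-1})$ you obtain only the Grothendieck-group inequality
\[
\pi_j \;\leq\; \bigl[D_{\rho|\cdot|^{x_j}}^{(k)}(\tau\rtimes\sigma_{j-1})\bigr],
\]
and after handling the cross-terms you would at best conclude $\pi_j \leq [\tau\rtimes\sigma_j]$ for some $\sigma_j\leq D_{\rho|\cdot|^{x_j}}^{(k)}(\sigma_{j-1})$. But this says only that $\pi_j$ is a \emph{subquotient} of $\tau\rtimes\sigma_j$, and socle irreducibility of $\tau\rtimes\sigma_j$ does not force an arbitrary subquotient to equal the socle. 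Your sentence ``a socle-irreducibility check \dots\ yields $\pi_j=\soc(\tau\rtimes\sigma_j)$'' is exactly the missing step. What would suffice is either an honest embedding $\pi_j\hookrightarrow\tau\rtimes\sigma_j$, or the inequality $[\Jac(\pi_j)]\geq \tau\otimes\sigma_j$, neither of which is produced by your semisimplified computation. Note also that passing through $\pi_j\hookrightarrow L_j\rtimes\pi\hookrightarrow L_j\times\tau\rtimes\sigma$ does not help directly, since $L_j\times\tau$ is typically reducible.

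The paper closes this gap by an argument of a different nature, working with actual maps rather than semisimplifications. For each fixed $j$ it uses the functoriality of the Geometric Lemma (together with the explicit realization of the intertwining operator $S\times L_j\rtimes\pi\cong L_j\times S\rtimes\pi$ by Jacquet integrals, and the surjection $S\rtimes\pi\twoheadrightarrow\pi'$ coming from MVW) to manufacture a nonzero map $S\rtimes\pi_j\to L_j\rtimes\pi'$. Composing with $\pi'\hookrightarrow\soc(\tau\times S)\rtimes\sigma$ and analysing supports (using $\rho|\cdot|^{-A-1}\notin\supp(S)$) then yields $[\Jac(\pi_j)]\geq\tau\otimes\sigma_j$ directly; combined with a constraint on the Langlands data $\mm_j$ of $\pi_j$ (no segments $[x,y]_\rho$ with $x<B-1$ and $y<-A$), this forces $\pi_j=\soc(\tau\rtimes\sigma_j)$. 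Your inductive Tadi\'c computation is then used \emph{after} this, exactly to establish $\sigma_j\leq D_{\rho|\cdot|^{x_j}}^{(k)}(\sigma_{j-1})$; it is the existence of $\sigma_j$ that requires the heavier machinery.
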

\begin{proof}
Note that $\tau \rtimes \sigma_j$ is SI for any $\sigma_j \in \Irr^G$ by \cite[Proposition 3.4]{At-socle}. 
Moreover, if $\pi_j = \soc(\tau \rtimes \sigma_j)$, 
and if we let $k_{B-1}, \dots, k_{-A-1} \geq k$ be the maximal integers such that 
\[
D_{\rho|\cdot|^{-A-1}}^{(k_{-A-1})} \circ \dots \circ D_{\rho|\cdot|^{B-1}}^{(k_{B-1})}(\pi) \not= 0, 
\]
then since $B-1 < 0$, it is irreducible and $\sigma_j$ is uniquely determined by 
\[
D_{\rho|\cdot|^{-A-1}}^{(k_{-A-1})} \circ \dots \circ D_{\rho|\cdot|^{B-1}}^{(k_{B-1})}(\pi)
\cong 
D_{\rho|\cdot|^{-A-1}}^{(k_{-A-1}-k)} \circ \dots \circ D_{\rho|\cdot|^{B-1}}^{(k_{B-1}-k)}(\sigma_j).
\]
In particular, we must have $\sigma_{ab} = \sigma$. 
\par

Write $\pi_j = L(\mm_j; \pi(\phi_j,\ep_j))$ as in the Langlands classification. 
First, we show that $\mm_j$ does not contain $[x,y]_\rho$ such that $x < B-1$ and $y < -A$. 
Indeed, if there were to exist such a segment in $\mm_j$, 
then we could take $C < B-1$ such that 
$D_{\rho|\cdot|^{-A-1}} \circ \dots \circ D_{\rho|\cdot|^C}(\pi_j) \not= 0$. 
However, since 
\[
\pi_j \hookrightarrow L_j \rtimes \pi
\]
and since the cuspidal support of $\Sp(\rho,a,b)$, and hence the one of $L_j$, 
does not contain $\rho|\cdot|^{\pm(A+1)}$, 
we see that there is $-A-1 \leq z < B-1$ such that $D_{\rho|\cdot|^z}(\pi) \not= 0$. 
This contradicts that $\pi = \soc(\tau \rtimes \sigma)$ and $D_{\rho|\cdot|^z}(\sigma) = 0$ for $z < B-1$.
Therefore, to show the existence of $\sigma_j \in \Irr^G$ such that $\pi_j = \soc(\tau \rtimes \sigma_j)$, 
by the Langlands classification,
it is enough to prove that $[\Jac(\pi_j)] \geq \tau \otimes \sigma_j$ for some $\sigma_j \not= 0$.
Moreover, if we were to know the existence of $\sigma_j$ such that $\pi_j = \soc(\tau \rtimes \sigma_j)$, 
then $[\Jac(\pi_j)]$ would be of the form 
\[
[\Jac(\pi_j)] \geq \tau \otimes \sigma_j + \sum_{\substack{\tau' \in \Irr^\GL \\ \tau' \not\cong \tau}} \tau' \otimes \Pi_{\tau'}.
\]
By applying a Jacquet functor to the inclusion $\pi_j \hookrightarrow L_j \times \tau \rtimes \sigma$, 
we see that $\sigma_j \leq [L_j \rtimes \sigma]$. 
In particular, since 
\[
\pi_j = \soc(\tau \rtimes \sigma_j) 
\leq D_{\rho|\cdot|^{x_j}}^{(k)}(\pi_{j-1}) 
\leq D_{\rho|\cdot|^{x_j}}^{(k)}(\tau \rtimes \sigma_{j-1}), 
\]
we would have 
\[
\sigma_j \leq \left(D_{\rho|\cdot|^{-A-1}}^{(k)} \circ \dots \circ D_{\rho|\cdot|^{-B-1}}^{(k)}\right) \circ
D_{\rho|\cdot|^{x_j}}^{(k)}(\tau \rtimes \sigma_{j-1}).
\]
By Tadi\'c's formula (Corollary \ref{TadicFormula})
together with the first assumption in Proposition \ref{typeA} (1),
it would imply that 
$\sigma_j \leq D_{\rho|\cdot|^{x_j}}^{(k)}(\sigma_{j-1})$. 
\par

Now we prove the existence of $\sigma_j$.
To do this, we prepare some notations. 
Fix $0 \leq j \leq ab$. 
Let 
\begin{itemize}
\item
$d, m \geq 0$ be such that $S \in \Irr(\GL_d(F))$ and $L_j \in \Irr(\GL_m(F))$; 
\item
$G_S$, $G_L$ and $G$ be the classical group such that 
$S \rtimes \pi \in \Rep(G_S)$, $L_j \rtimes \pi \in \Rep(G_L)$ and $L_j \times S \rtimes \pi \in \Rep(G)$; 
\item
$P = MN_P$ be the standard parabolic subgroup of $G$ with $M \cong \GL_m(F) \times G_S$;
\item
$\iota \colon \GL_m(F) \times G_S \xrightarrow{\sim} M$ be a fixed isomorphism.
\end{itemize}
There is a canonical surjection
$L_j \times S \rtimes \pi \twoheadrightarrow L_j \boxtimes (S \rtimes \pi)$
which is given by 
\[
L_j \times S \rtimes \pi \cong L_j \rtimes (S \rtimes \pi) \twoheadrightarrow L_j \boxtimes (S \rtimes \pi). 
\]
Here, $f \in L_j \rtimes (S \rtimes \pi)$ is regarded as a two variable function
\[
f \colon G \times G_S \rightarrow L_j \boxtimes S \boxtimes \pi
\]
such that 
\[
f(n \cdot \iota(a,h) \cdot g, h') = |\det(a)|^{\delta} L_j(a) f(g, h'h)
\]
for $n \in N_P, a \in \GL_m(F), h,h' \in G_S$ and $g \in G$, 
where $\delta \in \R$ is independent of $f$.
Then the isomorphism $L_j \rtimes (S \rtimes \pi) \xrightarrow{\sim} L_j \times S \rtimes \pi$
is given by $f(g,h) \mapsto f(g,\1)$, 
whereas, the surjection $L_j \rtimes (S \rtimes \pi) \twoheadrightarrow L_j \boxtimes (S \rtimes \pi)$
is given by $f(g,h) \mapsto f(\1,h)$.
In particular, the surjection 
$L_j \times S \rtimes \pi \twoheadrightarrow L_j \boxtimes (S \rtimes \pi)$
is described by $f(g, \1) \mapsto f(\iota(\1,h), \1)$.
This surjection factors through the $M$-equivariant map
\[
\Jac_{P}(L_j \times S \rtimes \pi) \twoheadrightarrow L_j \boxtimes (S \rtimes \pi),
\]
which we denote by $\Res_{G_S}$.
Similarly, we have an evaluation map 
$\ev \colon \Jac(L_j \rtimes \pi) \twoheadrightarrow L_j \boxtimes \pi$ at $\1$.
This gives a surjection 
\[
(\1_{\GL_0(F)} \boxtimes S) \rtimes \Jac(L_j \rtimes \pi)
\twoheadrightarrow L_j \boxtimes (S \rtimes \pi),
\]
which is also denoted by $\ev$. 
By applying the contragredient and the MVW functors to the injection $\pi' \hookrightarrow S \rtimes \pi$, 
we obtain, by \cite[Lemma 2.2]{AG}, a surjection  $S \rtimes \pi \twoheadrightarrow \pi'$. 
\par

Note that $S \times L_j$ is irreducible by \cite[Proposition 6.6]{LM},
and hence $S \times L_j \rtimes \pi \cong L_j \times S \rtimes \pi$.
We have an inclusion map $S \boxtimes \pi_j \hookrightarrow S \boxtimes (L_j \rtimes \pi)$ 
of representations of $\GL_d(F) \times G_L$, which is regarded as a Levi subgroup of $G$.
It gives an inclusion
\[
\Jac_P(S \rtimes \pi_j) \hookrightarrow \Jac_P(S \rtimes (L_j \rtimes \pi)).
\]
By the functoriality of the Geometric Lemma (Theorem \ref{GeometricLemma}), 
we have subspaces $F(S \boxtimes \pi_j) \subset S \rtimes \pi_j$ 
and $F(S \boxtimes (L_j \rtimes \pi)) \subset S \rtimes (L_j \rtimes \pi)$ 
such that there is a commutative diagram
\[
\xymatrix{
\Jac_P(F(S \boxtimes \pi_j)) \;\ar@{->>}[d] \ar@{^(->}[rr] 
&& \Jac_P(F(S \boxtimes (L_j \rtimes \pi))) \ar@{->>}[d] \\
(\1_{\GL_0(F)} \boxtimes S) \rtimes \Jac(\pi_j) \;\ar@{^(->}[rr]
&&(\1_{\GL_0(F)} \boxtimes S) \rtimes \Jac(L_j \rtimes \pi).
}
\]
Incorporating the maps obtained in the previous paragraph into the picture, 
we obtain the following  diagram:
\[
\xymatrix{
\Jac_P(S \rtimes \pi_j) \; \ar@{^(->}[r] 
& \Jac_P(S \times L_j \rtimes \pi) \ar@{=}[r]
& \Jac_P(L_j \times S \rtimes \pi) \ar@{->>}[d]^{\Res_{G_S}}\\
\Jac_P(F(S \boxtimes \pi_j)) \; \ar@{}[u]|{\bigcup} \ar@{->>}[d] \ar@{^(->}[r] 
& \Jac_P(F(S \boxtimes (L_j \rtimes \pi))) \ar@{}[u]|{\bigcup} \ar@{->>}[d] 
& L_j \boxtimes (S \rtimes \pi) \ar@{->>}[d]
\\
(\1_{\GL_0(F)} \boxtimes S) \rtimes \Jac(\pi_j) \;\ar@{^(->}[r]
&(\1_{\GL_0(F)} \boxtimes S) \rtimes \Jac(L_j \rtimes \pi) \ar@{->>}[ur]_{\ev}
& L_j \boxtimes \pi'.
}
\]
We claim that this diagram is commutative. 
In fact, the isomorphism $S \times L_j \rtimes \pi \xrightarrow{\sim} L_j \times S \rtimes \pi$
is given by the meromorphic continuation of the Jacquet integral 
\[
Jf(g) = \int_{\tl{N}_P} f(w^{-1}ug) du,
\]
where $w \in G$ is a representative of a certain Weyl element, and $\tl{N}_P$ is a quotient of $N_P$. 
In particular, the composition with $\Res_{G_S}$ is induced by 
$f \mapsto Jf(\iota(\1,h))$.
On the other hand, the subspace $F(S \boxtimes (L_j \rtimes \pi))$ of $S \times L_j \rtimes \pi$
is taken as a subspace consisting of $f$
such that $Jf(\iota(a,h))$ converges absolutely for $(a,h) \in \GL_m(F) \times G_S$, 
and this is the image of $f$ under the surjection 
$F(S \boxtimes (L_j \rtimes \pi)) 
\twoheadrightarrow (\1_{\GL_0(F)} \boxtimes S) \rtimes \Jac(L_j \rtimes \pi)$.
Hence the composition with $\ev$ is given by $f \mapsto Jf(\iota(\1,h))$.
This shows the commutativity of the right square. 
\par

Note that the composition 
\[
(\1_{\GL_0(F)} \boxtimes S) \rtimes \Jac(\pi_j) 
\hookrightarrow (\1_{\GL_0(F)} \boxtimes S) \rtimes \Jac(L_j \rtimes \pi) 
\xrightarrow{\ev} L_j \boxtimes (S \rtimes \pi)
\]
is surjective since it is induced from a surjection $\Jac(\pi_j) \twoheadrightarrow L_j \boxtimes \pi$. 
Hence the above commutative diagram shows that
we have a surjection $\Jac(S \rtimes \pi_j) \twoheadrightarrow L_j \boxtimes \pi'$, 
which implies, by Frobenius reciprocity, a nonzero map
\[
S \rtimes \pi_j \rightarrow L_j \rtimes \pi'.
\]
Since $L_j \times \soc(\tau \times S)$ is irreducible by \cite[Proposition 6.6]{LM}, 
we have
\begin{align*}
S \rtimes \pi_j 
&\rightarrow L_j \rtimes \pi'
\\&\hookrightarrow L_j \times \soc(\tau \times S) \rtimes \sigma
\\&\cong \soc(\tau \times S) \times L_j \rtimes \sigma
\\&\hookrightarrow \tau \times S \times L_j \rtimes \sigma.
\end{align*}
This implies $[\Jac(S \rtimes \pi_j)] \geq \tau \otimes S \otimes L_j \otimes \sigma \not= 0$. 
Since $\rho|\cdot|^{-A-1} \not\in \supp(S)$, 
by Tadi\'c's formula (Corollary \ref{TadicFormula}), 
there is an irreducible representation $\tau'' \otimes \sigma'_j$ with 
$[\Jac(\pi_j)] \geq \tau'' \otimes \sigma_j'$ such that
\[
\{\underbrace{\rho|\cdot|^{-A-1}, \dots, \rho|\cdot|^{-A-1}}_k\}
\subset \supp(\tau'') \subset \supp(\tau)
\]
as multi-sets, where $\supp(\tau)$ is the cuspidal support of $\tau$. 
By the condition on $\mm_j$ we have proven earlier, 
we must have $\tau''=\tau$, as desired. 
This completes the proof of Lemma \ref{geometric2}.
\end{proof}

Now we can prove Lemma \ref{geometric}. 
\begin{proof}[Proof of Lemma \ref{geometric}]
As said before, it is enough to show only (1). 
So we suppose that we are in the situation of (1).  
\par

For $\sigma_j \in \Irr^G$ defined in Lemma \ref{geometric2}, 
we claim that 
\[
\left(D_{\rho|\cdot|^{x_j}}^{(k)} \circ \dots \circ D_{\rho|\cdot|^{x_1}}^{(k)}\right)
(\soc(\soc(S' \rtimes \tau) \rtimes \sigma_0))
\geq \soc(\soc(S' \times \tau) \rtimes \sigma_j)
\]
for $0 \leq j \leq ab$.
This claim for $j=ab$ yields that 
\[
D_S(\soc(\soc(S' \rtimes \tau) \rtimes \sigma_0))
\geq \soc(\soc(S' \times \tau) \rtimes \sigma_{ab}) = \soc(S' \rtimes \pi).
\]
Since
\begin{align*}
\soc(\soc(S' \times \tau) \rtimes \sigma_0) 
&= \soc(S' \rtimes \soc(\tau \rtimes \sigma_0))
\\&= \soc(S' \rtimes \pi')
\leq \soc(\soc(S' \times \tau \times S) \rtimes \sigma),
\end{align*}
this implies Lemma \ref{geometric}. 
\par

The claim for $j=0$ is trivial. 
Fix $1 \leq j \leq ab$, and suppose that the claim holds for $j-1$.
Then 
\[
\left(D_{\rho|\cdot|^{x_j}}^{(k)} \circ \dots \circ D_{\rho|\cdot|^{x_1}}^{(k)}\right)
(\soc(\soc(S' \rtimes \tau) \rtimes \sigma_0))
\geq D_{\rho|\cdot|^{x_j}}^{(k)}(\soc(\soc(S' \times \tau) \rtimes \sigma_{j-1})).
\]
\par

Recall that $-A \leq x_j \leq A$. 
First, we assume $x_j \not= B$
so that $\soc(S' \rtimes \tau) \times (\rho|\cdot|^{x_j})^k$ is irreducible. 
Since 
\[
\left[\Jac(\soc(\soc(S' \times \tau) \rtimes \sigma_{j-1})\right]
\geq \soc(S' \times \tau) \otimes (\rho|\cdot|^{x_j})^k \otimes \sigma_{j}, 
\]
there is $\tau' \in \Irr^\GL$ such that 
$[\Jac(\soc(\soc(S' \times \tau) \rtimes \sigma_{j-1})] \geq \tau' \otimes \sigma_j$
and $[\Jac(\tau')] \geq \soc(S' \times \tau) \otimes (\rho|\cdot|^{x_j})^k$.
\par

We show that $\tau' = \soc(S' \times \tau) \times (\rho|\cdot|^{x_j})^k$.
We write $\tau' = L(\mm)$ and 
$\soc(S' \times \tau) = L(\Delta_\rho[a_1, b_1]^{k_1}, \dots, \Delta_\rho[a_r,b_r]^{k_r})$
with $b_1 < \dots < b_{r-1} < b_r = -A-1$ and $k_r = k$.
By looking at the cuspidal support, we can find $a_{u,v} \geq b_u$ 
for $1 \leq u \leq r$ and $1 \leq v \leq k_u$ such that 
\[
\sum_{u=1}^r \sum_{v=1}^{k_u} [a_{u,v}, b_u]_\rho \leq \mm.
\]
Suppose that $a_{u,v} \not= a_u$ for some $(u,v)$. 
Take the minimum $u$ with this condition. 
Hence $a_{i,v} = a_i$ for any $i < u$ and $1 \leq v \leq k_i$. 
If $a_{u,v} > a_u$, then $[\Jac(\tau')] \not\geq \soc(S' \times \tau) \otimes (\rho|\cdot|^{x_j})^k$. 
(See cf., \cite{KL} for $\Jac(\soc(S' \times \tau))$.)
This is a contradiction.
It $a_{u,v} < a_u$, then 
\[
\left(D_{\rho|\cdot|^{b_u}} \circ \dots \circ D_{\rho|\cdot|^{a_{u,v}}}\right)
\circ \left( 
\mathop{\circ}_{1 \leq i < u} D_{\rho|\cdot|^{b_i}}^{(k_i)} \circ \dots \circ D_{\rho|\cdot|^{a_i}}^{(k_i)} 
\right) 
(\soc(\soc(S' \times \tau) \rtimes \sigma_{j-1})) \not= 0.
\]
Since $a_u < 0$ 
and $D_{\rho|\cdot|^{b_i}} \circ \dots \circ D_{\rho|\cdot|^{a_i}}(\sigma_{j-1}) = 0$ for $i < u$, 
it implies that 
$D_{\rho|\cdot|^{b_u}} \circ \dots \circ D_{\rho|\cdot|^{a_{u,v}}}(\sigma_{j-1}) \not= 0$.
However, since $\sigma_{j-1} \leq [L_{j-1} \rtimes \sigma]$, 
we can find $b_u \leq c \leq a_{u,v}$ such that $D_{\rho|\cdot|^c}(\sigma) \not= 0$.
This contradicts that $D_{\rho|\cdot|^x}(\sigma) = 0$ for any $x < B-1 = \max\{a_1, \dots, a_r\}$.
Therefore, $a_{u,v} = a_u$ for any $(u,v)$, and hence 
\[
\sum_{u=1}^r k_u [a_u, b_u]_\rho \leq \mm.
\]
Comparing the cuspidal supports, we see that the difference is $k [x_j,x_j]_\rho$.
Hence 
\[
\tau' = \soc(S' \times \tau) \times (\rho|\cdot|^{x_j})^k \cong (\rho|\cdot|^{x_j})^k \times \soc(S' \rtimes \tau).
\]
In particular, we have
\[
\left[\Jac(\soc(\soc(S' \times \tau) \rtimes \sigma_{j-1})\right]
\geq (\rho|\cdot|^{x_j})^k \otimes \soc(S' \times \tau) \otimes \sigma_{j}.
\]
It implies that there is $\pi_j' \in \Irr^G$ such that 
$D_{\rho|\cdot|^{x_j}}^{(k)}(\soc(\soc(S' \times \tau) \rtimes \sigma_{j-1})) \geq \pi_j'$
and $[\Jac(\pi_j')] \geq \soc(S' \times \tau) \otimes \sigma_j$.
\par

We show that $\pi_j' = \soc(\soc(S' \times \tau) \rtimes \sigma_j)$. 
We write $\pi_j' = L(\mm; \pi(\phi, \ep))$ with $\pi(\phi,\ep)$ tempered.
Since $[\Jac(\pi_j')] \geq \soc(S' \times \tau) \otimes \sigma_j$ and since $a_u < 0$, 
we see that 
\[
\sum_{u=1}^r\sum_{v=1}^{k_u} [a_{u,v}, b_u]_\rho \leq \mm
\]
for some $b_u \leq a_{u,v} \leq a_u$.
Suppose that $a_{u,v} < a_u$ for some $(u,v)$, and we take the minimum such $u$.
Then 
\[
\left(D_{\rho|\cdot|^{b_u}} \circ \dots \circ D_{\rho|\cdot|^{a_{u,v}}} \right)
\circ \left( 
\mathop{\circ}_{1 \leq i < u} D_{\rho|\cdot|^{b_i}}^{(k_i)} \circ \dots \circ D_{\rho|\cdot|^{a_i}}^{(k_i)} 
\right) 
\circ 
D_{\rho|\cdot|^{x_j}}^{(k)}(\soc(\soc(S' \times \tau) \rtimes \sigma_{j-1})) \not= 0.
\]
This implies $D_{\rho|\cdot|^c}(\sigma) \not= 0$ for some $b_u \leq c < a_u \leq B-1$, 
or $D_{\rho|\cdot|^{b_u}} \circ \dots \circ D_{\rho|\cdot|^{B-1}}(\sigma) \not= 0$. 
This contradicts the first assumption in Proposition \ref{typeA} (1). 
Therefore, $a_{u,v} = a_u$ for any $(u,v)$, and hence 
\[
\sum_{u=1}^r k_u [a_u, b_u]_\rho \leq \mm.
\]
Moreover, by the same assumption in Proposition \ref{typeA} (1), 
we see that $D_{\rho|\cdot|^{-A-1}} \circ \dots \circ D_{\rho|\cdot|^x}(\sigma_{j-1}) = 0$ 
for any $-A-1 \leq x \leq B-2$.
This implies that
$L(\mm) = \soc(\soc(S' \times \tau) \times L(\mm'))$ for some $\mm' \leq \mm$.
Hence we can write $\pi_j' = \soc(\soc(S' \times \tau) \rtimes \sigma_j')$ for some $\sigma_j' \in \Irr^G$. 
Since $\pi'_j \leq D_{\rho|\cdot|^{x_j}}^{(k)}(\soc(\soc(S' \times \tau) \rtimes \sigma_{j-1}))$, 
we see that $\sigma_j'$ is uniquely determined by the inequality 
\[
[\Jac(\pi_j')] \geq \Delta_{\rho}[a_1,b_1]^{k_1} \otimes \dots \otimes \Delta_\rho[a_r,b_r]^{k_r}\otimes \sigma_j'.
\]
Hence we have $\sigma_j' = \sigma_j$, 
and we conclude that $\pi_j' = \soc(\soc(S' \times \tau) \rtimes \sigma_j)$. 
Therefore
\[
D_{\rho|\cdot|^{x_j}}^{(k)}(\soc(\soc(S' \times \tau) \rtimes \sigma_{j-1})) 
\geq \soc(\soc(S' \times \tau) \rtimes \sigma_j),
\]
as desired. 
We obtain the claim for $j$ with $x_j \not= B$.
\par

Next, we assume that $x_j = B$.  
By the proof of \cite[Proposition 3.4]{At-socle}, we have
\[
\soc(\soc(S' \times \tau) \rtimes \sigma_{j-1}) 
= \soc(S' \rtimes \soc(\tau \rtimes \sigma_{j-1}))
= \soc(S' \rtimes \pi_{j-1}). 
\]
Hence by the same argument starting with 
\[
\left[\Jac(\soc(S' \rtimes \pi_{j-1}))\right] \geq S' \otimes (\rho|\cdot|^{x_j})^k \otimes \pi_j,
\]
we have
\[
D_{\rho|\cdot|^{x_j}}^{(k)}(\soc(S' \rtimes \pi_{j-1})) \geq \soc(S' \rtimes \pi_j).
\]
Since $\soc(S' \rtimes \pi_j) = \soc(S' \rtimes \soc(\tau \rtimes \sigma_j)) 
= \soc(\soc(S' \times \tau) \rtimes \sigma_j)$, 
we obtain the claim for $j$ with $x_j = B$.
This completes the proof of Lemma \ref{geometric}.
\end{proof}

\subsection{M{\oe}glin's construction}\label{sec.Moe}
To show Proposition \ref{typeA}, we review M{\oe}glin's explicit construction of $A$-packets.
It was refined by the first author in \cite{At-const}, in which
the following notion was introduced.

\begin{defi}
\begin{enumerate}
\item
An \emph{extended segment} is a triple $([A,B]_\rho, l, \eta)$, 
where 
\begin{itemize}
\item
$[A,B]_\rho$ is a segment with $A,B \in (1/2)\Z$ (so that $A \geq B$ and $A-B \in \Z$); 
\item
$l \in \Z$ such that $0 \leq l \leq \half{b}$ with $b = A-B+1$; 
\item
$\eta \in \{\pm1\}$. 
\end{itemize}

\item
Two extended segments $([A,B]_\rho, l, \eta)$ and $([A',B']_{\rho'}, l', \eta')$
are said to be \emph{equivalent} if 
\begin{itemize}
\item
$[A,B]_\rho = [A',B']_{\rho'}$; 
\item
$l=l'$; and 
\item
$\eta = \eta'$ if $l=l' < \half{b}$. 
\end{itemize}
Two multi-sets of extended segments 
$\{([A_i,B_i]_{\rho_i}, l_i, \eta_i)\}_{i \in I}$ and $\{([A'_i,B'_i]_{\rho'_i}, l'_i, \eta'_i)\}_{i \in I}$
with the same index set $I$ are said to be \emph{equivalent} if 
$([A_i,B_i]_{\rho_i}, l_i, \eta_i)$ and $([A'_i,B'_i]_{\rho'_i}, l'_i, \eta'_i)$ are equivalent
for all $i \in I$. 

\item
An \emph{extended multi-segment} for $G$
is an equivalence class of multi-sets of extended segments
\[
\EE = \{([A_i,B_i]_{\rho_i}, l_i, \eta_i)\}_{i \in (I,>)}
\]
such that 
\begin{itemize}
\item
$\rho_i \in \Cusp^\GL$ with $\rho_i \cong \rho_i^\vee$; 
\item
$A_i+B_i \geq 0$; 
\item
$\psi_\EE = \oplus_{i \in I} \rho_i \boxtimes S_{a_i} \boxtimes S_{b_i}$ is an $A$-parameter for $G$ of good parity, 
where $a_i = A_i+B_i+1$ and $b_i = A_i-B_i+1$; 
\item
the sign condition 
\[
\prod_{i \in I} (-1)^{[\half{b_i}]+l_i} \eta_i^{b_i} = 1
\]
holds; 
\item
$>$ is a total order on $I$, 
which is \emph{admissible} in the sense that 
\[
\rho_i \cong \rho_j,\; A_i < A_j,\; B_i < B_j \implies i < j
\]
and 
\[
\rho_i \cong \rho_j,\; B_i < B_j < 0 \implies i < j.
\]
\end{itemize}
\end{enumerate}
\end{defi}

Let $\EE = \{([A_i,B_i]_{\rho_i}, l_i, \eta_i)\}_{i \in (I,>)}$ be an extended multi-segment for $G$. 
When $i < j$, we write $([A_i,B_i]_{\rho_i}, l_i, \eta_i) < ([A_j,B_j]_{\rho_j}, l_j, \eta_j)$.
For simplicity, we write $I = \{1, \dots, m\}$ with the usual order. 
We say that $\EE$ has a \emph{non-negative discrete diagonal restriction (DDR)} 
if 
\[
0 \leq B_1 \leq A_1 < B_2 \leq A_2 < \dots < B_m \leq A_m. 
\]
In this case, we define $\pi(\EE) \in \Irr(G)$ by 
\[
\pi(\EE) = \soc\left(
\bigtimes_{i = 1}^m \begin{pmatrix}
B_i & \ldots & B_i+l_i-1 \\
\vdots & \ddots & \vdots \\
-A_i & \ldots & -(A_i-l_i+1)
\end{pmatrix}_{\rho_i}
\rtimes \pi(\phi, \ep)
\right)
\]
with 
\[
\phi = \bigoplus_{i=1}^m \bigoplus_{j=0}^{A_i-B_i-2l_i} \rho_i \boxtimes S_{2(B_i+l_i+j)+1}
\]
and $\ep(\rho \boxtimes S_{2(B_i+l_i+j)+1}) = (-1)^{j}\eta_i$ for $1 \leq i \leq m$ 
and $0 \leq j \leq A_i-B_i-2l_i$.
\par

In the general case, given an extended multi-segment 
$\EE = \{([A_i,B_i]_{\rho_i}, l_i, \eta_i)\}_{i \in (I,>)}$, 
to define the representation $\pi(\EE)$, we proceed as follows. 
Take a sequence of non-negative integers  $\{t_i\}_{i \in (I,>)}$ such that 
$\EE_{\gg} = \{([A_i+t_i,B_i+t_i]_{\rho_i}, l_i, \eta_i)\}_{i \in (I,>)}$
has a non-negative DDR, and define 
\[
\pi(\EE) = \mathop{\circ}_{1 \leq i \leq m}\left(
D_{\rho_i|\cdot|^{B_i+1}, \dots, \rho_i|\cdot|^{A_i+1}} 
\circ \dots \circ D_{\rho_i|\cdot|^{B_i+t_i}, \dots, \rho_i|\cdot|^{A_i+t_i}}
\right)(\pi(\EE_\gg)). 
\]
This definition does not depend on the choice of $\{t_i\}_{i \in (I,>)}$.
Then $\pi(\EE)$ is irreducible or zero. 

\begin{thm}[{\cite[Theorem 1.2]{At-const}}]
For $\psi \in \Psi(G)$ of good parity, we have
\[
\Pi_\psi = \{\pi(\EE) \;|\; \psi_\EE \cong \psi\} \setminus \{0\}.
\]
\end{thm}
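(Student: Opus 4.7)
The plan is to reduce from arbitrary extended multi-segments to those of non-negative DDR type, where $\pi(\EE)$ is defined directly as an explicit socle, and then to match these representations against M{\oe}glin's A-packet via her own construction and Arthur's endoscopic character identities. The derivative operations of Section~\ref{DandS} provide the bridge between the two cases.

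In the DDR case, the strictly separated $B_i$'s make the induced representation
\[
\bigtimes_{i=1}^m \begin{pmatrix}B_i & \ldots & B_i+l_i-1 \\ \vdots & \ddots & \vdots \\ -A_i & \ldots & -(A_i-l_i+1)\end{pmatrix}_{\rho_i} \rtimes \pi(\phi,\ep)
\]
socle irreducible, so its socle is a well-defined irreducible representation; this follows from socle irreducibility for products of truncated Speh representations against tempered representations, cf.~\cite[Proposition 3.4]{At-socle}. I would then identify this socle as the element of $\Pi_\psi$ whose character $\pair{\cdot, \pi(\EE)}_\psi \in \widehat{\AA_\psi}$ is the one prescribed by the data $(l_i,\eta_i)$, by comparison with M{\oe}glin's original construction and by matching the endoscopic transfer identities that characterize $\Pi_\psi$.

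For general $\EE$, the key assertion is that a single-index shift $(A_j,B_j) \mapsto (A_j+1, B_j+1)$ on $\EE$ is reversed at the level of representations by the derivative $D_{\rho_j|\cdot|^{B_j+1},\dots,\rho_j|\cdot|^{A_j+1}}$, and that these elementary derivatives attached to distinct indices commute up to zero contributions once admissibility of the order $>$ on $I$ is imposed. Both statements are Jacquet module calculations that follow from Tadi\'c's formula (Corollary~\ref{TadicFormula}) together with the Geometric Lemma, and they imply simultaneously the well-definedness of the construction (independence of $\{t_i\}$) and its compatibility with M{\oe}glin's derivative principle for A-packets: a derivative along a segment absent from the tempered support of $\psi$ either annihilates a packet element or transports it to another packet element attached to the explicitly shifted parameter.

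The main obstacle is \emph{completeness}: proving that every member of $\Pi_\psi$ equals some $\pi(\EE)$, together with a sharp non-vanishing criterion. The parameterization of $\Pi_\psi$ by $\widehat{\AA_\psi}$ must be matched bijectively with the set of equivalence classes of extended multi-segments $\EE$ satisfying $\psi_\EE \cong \psi$ and $\prod_i (-1)^{[b_i/2]+l_i}\eta_i^{b_i} = 1$. The delicate point is the boundary regime $l_i = b_i/2$, where $\eta_i$ is forgotten from the equivalence relation; one must then verify that $\pi(\EE)$ vanishes precisely in those cases forced by the combinatorics, so that the two counts agree. This non-vanishing criterion, which requires a careful inductive analysis of the derivative tower $\pi(\EE_\gg) \rightsquigarrow \pi(\EE)$, is the technical core of the argument and where I expect essentially all of the real work to lie.
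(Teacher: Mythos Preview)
This theorem is not proved in the present paper: it is quoted verbatim as \cite[Theorem 1.2]{At-const} and used as a black box in Section~\ref{sec.Moe}. There is therefore no proof here to compare your proposal against. What you have written is a reasonable outline of the strategy carried out in \cite{At-const} itself---reduction to the non-negative DDR case via iterated derivatives, identification of the DDR socle with M{\oe}glin's packet element, and a combinatorial bijection between equivalence classes of $\EE$ and $\widehat{\AA_\psi}$---but the paper under review neither reproduces nor sketches that argument, so your proposal is being compared to nothing.

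If your intent was to supply a proof where the paper gives none, be aware that your sketch is just that: the non-vanishing criterion you flag as ``the technical core'' is indeed the hard part, and your outline does not yet contain the inductive mechanism (tracking how the Langlands data evolve under each $D_{\rho|\cdot|^{B_j+1},\dots,\rho|\cdot|^{A_j+1}}$ and controlling cancellations between adjacent extended segments) that \cite{At-const} develops over several sections. The commutation of derivatives attached to distinct indices also requires more than admissibility of the order alone; one needs the specific separation conditions built into the shift $\EE \mapsto \EE_\gg$.
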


\subsection{Proof of Proposition \ref{typeA}}\label{sec.proof_inductive}
In this subsection, we prove Proposition \ref{typeA}.
Note that the assertions (1) and (2) are equivalent to each other by taking Aubert duality.
Hence we only prove (2). 
\par

Recall that $\sigma$ is a representation of Arthur type such that 
$D_{\rho|\cdot|^x}(\sigma) = 0$ for $x > B+1$, 
and that $D_{\rho|\cdot|^{A+1}} \circ \dots \circ D_{\rho|\cdot|^{B+1}}(\sigma) = 0$. 
By \cite[Theorem 4.1]{At-Is_Arthur}, 
there is an extended multi-segment $\EE$ with $\sigma = \pi(\EE)$
such that if $([A',B']_{\rho}, *, *) \in \EE$, then $B' \leq B$, or $B' = B+1$ and $A' \leq A$.
\par

We take integers $t_1,\dots,t_k \gg 0$, which define $S'$.
We will give conditions for these integers later. 
By \cite[Theorem 1.1]{At-socle}, in the notation of Lemma \ref{geometric} (2), 
any irreducible summand $\pi''$ in $\soc(\soc(S' \times \tau \times S) \rtimes \sigma)$ is of Arthur type, 
and has an extended multi-segment of the form
\[
\EE'' = \EE \cup \bigcup_{i=1}^k\left\{([A,B]_\rho,l,\eta_i), ([A+t_i,B+t_i]_\rho,l,\eta'_i)\right\}, 
\]
where the order $<$ on $\EE''$ is given by
\begin{align*}
([A',B']_{\rho'}, l', \eta') &< ([A,B]_\rho,l,\eta_1) < \dots < ([A,B]_\rho,l,\eta_k)
\\&
< ([A+t_1,B+t_1]_\rho,l,\eta'_1) < \dots < ([A+t_k,B+t_k]_\rho,l,\eta'_k)
\end{align*}
for any $([A',B']_\rho, l', \eta') \in \EE$. 
Notice that this is an admissible order 
since if $\rho' \cong \rho$ and $B' = B+1$, then $[A',B']_{\rho'} \subset [A,B]_{\rho}$.
\par

We write 
\[
\EE \cup \{([A,B]_\rho,l,\eta_i)\;|\; i = 1,\dots,k\} 
= \{([A'_j,B'_j]_{\rho'}, l'_j,\eta'_j) \;|\; j = 1, \dots, k'\}, 
\]
which is independent of $t_1,\dots,t_k$. 
Now we choose integers $t_1,\dots,t_k \gg0$ so that 
there are integers $t_1',\dots,t'_{k'} \geq 0$ such that
\[
\EE''_{\gg} = 
\left\{([A'_j+t'_j,B'_j+t'_j]_{\rho'}, l'_j,\eta'_j) \;\middle|\; j = 1,\dots, k'\right\}
\cup
\left\{([A+t_i,B+t_i]_\rho,l,\eta'_i) \;\middle|\; i=1,\dots,k\right\}
\]
has a non-negative DDR, i.e., 
\begin{align*}
0 \leq B'_1+t'_1 &\leq A'_1+t'_1 < \dots < B'_{k'}+t'_{k'} \leq A'_{k'}+t'_{k'}
\\&
< B+t_1 \leq A+t_1 < \dots < B+t_k \leq A+t_k.
\end{align*}
In particular, $B+t_1 > A$.
Then by construction, 
one can write 
\[
\pi(\EE'') = L(\mm_1+\mm_0; \pi(\phi_1 \oplus \phi_0, \ep)),
\] 
where 
\begin{itemize}
\item
$\mm_1$ is the multi-segment given by 
\[
\mm_1 = \sum_{i=1}^k \sum_{j=0}^{l-1} [B+t_i+j, -A-t_i+j]_\rho;
\]
\item
$\phi_1$ is the representation of $W_F \times \SL_2(\C)$ given by
\[
\phi_1 = \bigoplus_{i=1}^k \bigoplus_{j=0}^{A-B-2l} \rho \boxtimes S_{2(B+t_i+l+j)+1}; 
\]
\item
$\ep$ satisfies that 
\[
\ep(\rho \boxtimes S_{2(B+t_i+l+j)+1}) = (-1)^{j}\eta_i'
\]
for $1 \leq i \leq k$ and $0 \leq j \leq A-B-2l$;
\item
$\mm_0$ is a multi-segment such that 
if $[x,-y]_\rho \in \mm_0$, then $x < y < B+t_1$; 
\item
$\phi_0$ is a tempered representation of $W_F \times \SL_2(\C)$ such that 
if $\rho \boxtimes S_{2d+1} \subset \phi_0$, then $d < B+t_1$.
\end{itemize}
\par

Now, by Lemma \ref{geometric}, 
we can take $\pi(\EE'') \hookrightarrow \soc(\soc(S' \times \tau \times S) \rtimes \sigma)$ such that 
\[
D_S(\pi(\EE'')) \geq \soc(S' \rtimes \pi).
\] 

\begin{lem}\label{remaining}
We have 
$\soc(S' \rtimes \pi) = L(\mm_1+\mm_0'; \pi(\phi_1 \oplus \phi_0', \ep'))$
for some $\mm_0', \phi_0', \ep'$ with $\ep'|_{\phi_1} = \ep|_{\phi_1}$. 
In other words, 
$\mm_1$ and $(\phi_1, \ep|_{\phi_1})$ remain in the Langlands data of $\soc(S' \rtimes \pi)$.
\end{lem}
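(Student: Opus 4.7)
The plan is to use the inequality $D_S(\pi(\EE'')) \geq \soc(S' \rtimes \pi)$ furnished by Lemma~\ref{geometric}, together with a careful analysis of how the operator $D_S$ acts on the Langlands data of $\pi(\EE'')$. Since $\pi(\EE'') = L(\mm_1 + \mm_0; \pi(\phi_1 \oplus \phi_0, \ep))$ is known explicitly, the goal is to prove that every irreducible summand of $D_S(\pi(\EE''))$ has Langlands data of the form $L(\mm_1 + \mm_0''; \pi(\phi_1 \oplus \phi_0'', \ep''))$ with $\ep''|_{\phi_1} = \ep|_{\phi_1}$. Since $\soc(S' \rtimes \pi)$ is one such summand, the lemma then follows.

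The crucial preliminary observation concerns cuspidal supports. The operator
\[
D_S = D_{\rho|\cdot|^{x_{ab}}}^{(k)} \circ \dots \circ D_{\rho|\cdot|^{x_1}}^{(k)}
\]
only takes derivatives at $\rho|\cdot|^x$ for $x \in [-A, A]$. Meanwhile, by the choice of $t_i \gg 0$ satisfying $B+t_1 > A$, every segment $[B+t_i+j, -A-t_i+j]_\rho$ of $\mm_1$ has left endpoint $B+t_i+j > A$ and right endpoint $-A-t_i+j < -A$, and every summand $\rho \boxtimes S_{2(B+t_i+l+j)+1}$ of $\phi_1$ has $B+t_i+l+j > A$. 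Hence the cuspidal exponents reachable by $D_S$ lie strictly inside $[-A, A]$, whereas the endpoints of $\mm_1$'s segments and the ``extremal'' cuspidal supports of $\pi(\phi_1, \ep|_{\phi_1})$ lie strictly outside.

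The main step is the following inductive claim: for any irreducible $\varpi = L(\mm_1 + \mm'; \pi(\phi_1 \oplus \phi', \tl{\ep}))$ with $\tl{\ep}|_{\phi_1} = \ep|_{\phi_1}$ and any $x \in [-A, A]$, each irreducible summand of $D_{\rho|\cdot|^x}(\varpi)$ is again of this form. To prove it, embed $\varpi \hookrightarrow \Delta \rtimes \pi(\phi_1 \oplus \phi', \tl{\ep})$ where $\Delta$ is the standard module attached to $\mm_1 + \mm'$, and apply Tadi\'c's formula (Corollary~\ref{TadicFormula}) to analyze the Jacquet module. Because $\rho|\cdot|^x$ has dimension one, a term $\tau_1 \times \tau_4 \times \tau_3^\vee$ equal to $\rho|\cdot|^x$ forces exactly one of the three factors to be $\rho|\cdot|^x$ and the others to be trivial. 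The three possible sources ---  a left endpoint of a segment (ruled out for $\mm_1$ since its left endpoints exceed $A$), a right endpoint contributing via $\tau_3^\vee$ (ruled out for $\mm_1$ since its right endpoints are below $-A$), and a derivative of the tempered factor (ruled out for $\phi_1$ since its discrete-series components have dimension parameter $> A$) --- are each incompatible with the ``far'' data. Thus $\rho|\cdot|^x$ can only be extracted from $(\mm', \phi', \tl{\ep}|_{\phi'})$, leaving $(\mm_1, \phi_1, \tl{\ep}|_{\phi_1})$ in place.

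Iterating this claim along the sequence of derivatives comprising $D_S$, starting from $\pi(\EE'')$, will yield the desired Langlands form for every irreducible summand of $D_S(\pi(\EE''))$, and the lemma follows. The main obstacle will be the careful execution of this inductive step: while each exclusion in the Tadi\'c-formula analysis is individually transparent, one must simultaneously rule out all three sources at every iteration and rigorously verify that the character restriction on $\phi_1$ is preserved under the tempered derivatives (so that $\tl{\ep}|_{\phi_1} = \ep|_{\phi_1}$ propagates throughout). This preservation of the character on the ``far'' tempered summand, while the ``near'' summand and its character evolve, is the subtlest bookkeeping point, and will likely rely on the known structure of tempered Jacquet modules via endoscopic character identities.
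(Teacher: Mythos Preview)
Your approach differs from the paper's, and there is a genuine gap in the central inductive step.

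You claim that for any irreducible $\varpi = L(\mm_1 + \mm'; \pi(\phi_1 \oplus \phi', \tl{\ep}))$ and any $x \in [-A,A]$, every irreducible summand of $D_{\rho|\cdot|^x}(\varpi)$ again has Langlands data of this form. Your justification is to embed $\varpi$ in its standard module and apply Tadi\'c's formula to show that $\rho|\cdot|^x$ can only be extracted from the ``near'' part. But this only yields an inequality in the Grothendieck group: $D_{\rho|\cdot|^x}(\varpi) \leq \bigl[(\text{standard module for }\mm_1) \times (\text{near derivative terms})\bigr]$. It does \emph{not} follow that every irreducible subquotient of the right-hand side has $\mm_1$ as part of its Langlands data. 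The segments of $\mm_1$ are mutually linked (they form ladders), so the standard module for $\mm_1$ alone already has subquotients whose multisegments differ from $\mm_1$; nothing in your Tadi\'c argument rules out such subquotients appearing in $D_{\rho|\cdot|^x}(\varpi)$. You describe this step as ``individually transparent,'' but the passage from Jacquet-module control of an induced representation to Langlands-data control of irreducible summands is exactly the nontrivial point.

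The paper takes a different route that sidesteps this difficulty. Rather than analyzing all of $D_S(\pi(\EE''))$, it works directly with the single representation $\soc(S' \rtimes \pi) = L(\mm'; \pi(\phi',\ep'))$ and exploits an input you do not use: the inequality $[\Jac(\soc(S' \rtimes \pi))] \geq S' \otimes \pi$. Combining this lower bound with vanishing statements of the form $D_{\rho|\cdot|^x}(\pi(\EE'')) = 0$ for $B+t_i < x \leq A+t_i$ (which propagate to $\soc(S' \rtimes \pi)$ via $D_S$), the paper pins down, segment by segment, that each $[B+t_i+j, -A-t_i+j]_\rho$ must lie in $\mm'$. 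Only after peeling off $L(\mm_1)$ to write $\soc(S' \rtimes \pi) = \soc(L(\mm_1) \rtimes \pi_0)$ with $\pi_0 \leq D_S(L(\mm_0; \pi(\phi_1 \oplus \phi_0, \ep)))$ does the paper invoke the ``far tempered part is untouched'' idea you emphasize, and there it applies cleanly because only the tempered datum $(\phi_1, \ep|_{\phi_1})$ remains to be tracked. In short, the paper uses the specific structure of $\soc(S' \rtimes \pi)$ (its Jacquet module contains $S' \otimes \pi$) to establish the multisegment part, reserving your style of argument for the easier tempered part.
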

\begin{proof}
We write $\soc(S' \rtimes \pi) = L(\mm'; \pi(\phi', \ep'))$.
For $1 \leq i \leq k$ and $0 \leq j \leq l-1$, 
assuming that $\mm'$ contains 
\[
\sum_{i'=i+1}^{k} \sum_{j'=0}^{l-1} [B+t_{i'}+j', -A-t_{i'}+j']_\rho 
+\sum_{j'=0}^{j-1} [B+t_i+j', -A-t_i+j']_\rho,
\] 
we will show $[B+t_i+j, -A-t_i+j]_\rho \in \mm'$. 
\par

Suppose first that for any $[x,y]_\rho \in \mm'$, we have $y \not= -A-t_i+j$.
Then by looking at the cuspidal support, 
we see that $\phi' \supset \rho \boxtimes S_{2(A+t_i-j)+1}$.
Note that $D_{\rho|\cdot|^x}(\soc(S' \rtimes \pi)) = 0$ for $B+t_i < x \leq A+t_i$ 
since the same condition holds for $\pi(\EE'')$ and $t_i \gg0$. 
It implies that
\[
\phi' \supset \rho \boxtimes \left(S_{2(B+t_i+j)+1} \oplus \dots \oplus S_{2(A+t_i-j)+1}\right).
\]
Then for any sequence $x_1, \dots, x_m$ with $x_m = -A-t_i+j$, 
we have 
\[
D_{\rho|\cdot|^{x_m}} \circ \dots \circ D_{\rho|\cdot|^{x_1}}(L(\mm'; \pi(\phi', \ep'))) = 0.
\] 
In particular, we have $\Jac(L(\mm'; \pi(\phi', \ep'))) \not\geq S' \otimes \pi$, which is a contradiction. 
Hence there is $[x, y]_\rho \in \mm'$ such that $y = -A-t_i+j$.
\par

Let $[x, -A-t_i+j]_\rho \in \mm'$.
Note that $x < A+t_i-j$.
If $x > B+t_i+j$, then since $D_{\rho|\cdot|^x}(\pi(\EE'')) = 0$ 
and hence $D_{\rho|\cdot|^x}(L(\mm'; \pi(\phi', \ep'))) = 0$, 
we must have $[x-1,y]_\rho \in \mm'$ such that $y < -A-t_i+j$.
However, by looking at the cuspidal support, we cannot take such $y$. 
If $x < B+t_i+j$, then 
\[
D_{\rho|\cdot|^x, \dots, \rho|\cdot|^{-A-t_i+j}}
\circ 
\left(
\mathop{\circ}_{0 \leq j' <j}
D_{\rho|\cdot|^{B+t_i+j'}, \dots, \rho|\cdot|^{-A-t_i+j'}}
\right)
(L(\mm'; \pi(\phi', \ep'))) \not= 0,
\] 
and hence 
\[
D_{\rho|\cdot|^x, \dots, \rho|\cdot|^{-A-t_i+j}} 
\circ 
\left(
\mathop{\circ}_{0 \leq j' <j}
D_{\rho|\cdot|^{B+t_i+j'}, \dots, \rho|\cdot|^{-A-t_i+j'}}
\right)
\circ D_S(\pi(\EE'')) \not= 0.
\]
Since $t_i \gg0$, this is impossible. 
Therefore, we have $x = B+t_i+j$, and we conclude that $[B+t_i+j, -A-t_i+j]_\rho \in \mm'$.
\par

By induction, we see that $\soc(S' \rtimes \pi) = \soc(L(\mm_1) \rtimes \pi_0)$
for some irreducible representation $\pi_0$. 
Since 
\begin{align*}
\soc(S' \rtimes \pi) &\leq D_S(\pi(\EE'')) 
\\&\leq D_S\left(L(\mm_1) \rtimes L(\mm_0; \pi(\phi_1 \oplus \phi_0, \ep))\right)
\\&= L(\mm_1) \rtimes D_S\left(L(\mm_0; \pi(\phi_1 \oplus \phi_0, \ep))\right),
\end{align*}
we see that $\pi_0 \leq D_S(L(\mm_0; \pi(\phi_1 \oplus \phi_0, \ep)))$.
Since $t_1, \dots, t_k \gg0$, we see that $\pi_0 = L(\mm_0'; \pi(\phi_1 \oplus \phi_0', \ep'))$ 
for some $\mm_0'$ and $(\phi_0', \ep')$. 
Hence $\soc(S' \rtimes \pi) = L(\mm_1+\mm_0'; \pi(\phi_1 \oplus \phi_0', \ep'))$, as desired. 
\end{proof}

Recall that $\sigma$ is of Arthur type. 
\begin{prop}\label{changingE}
Continue in the above setting.
There is an extended multi-segment $\EE_0$ such that 
\begin{itemize}
\item
$\pi(\EE_0) = \sigma$; 
\item
$\EE_0$ contains $\{([A,B]_\rho,l,\eta'_i) \;|\; i=1,\dots,k\}$; 
\item
if $([A',B'],l',\eta') \in \EE_0$ is not in $\{([A,B]_\rho,l,\eta'_i) \;|\; i=1,\dots,k\}$, 
then $([A',B'],l',\eta') < ([A,B]_\rho,l,\eta'_i)$. 
\end{itemize}
\end{prop}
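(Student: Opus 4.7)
My plan is to build $\EE_0$ by adjoining $k$ copies of $([A,B]_\rho, l, \eta_i')$ at the top of the total order of $\EE$, i.e.\ set
\[
\EE_0 = \EE \cup \{([A,B]_\rho, l, \eta_i')\}_{i=1}^{k},
\]
with the order extended so that every element of $\EE$ precedes $([A,B]_\rho, l, \eta_1') < \dots < ([A,B]_\rho, l, \eta_k')$, and then to verify that $\EE_0$ is a valid extended multi-segment and that $\pi(\EE_0) = \sigma$. The signs $\eta_i'$ are those already produced by the construction in Lemma~\ref{remaining} from the datum of $\EE''$.

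I would first check admissibility. By the normal form of $\EE$ guaranteed by \cite[Theorem 4.1]{At-Is_Arthur}, every $\rho$-indexed element $([A',B']_\rho,l',\eta')$ of $\EE$ satisfies $B' \le B$, or $B' = B+1$ and $A' \le A$; in both cases placing these strictly before the new $([A,B]_\rho,l,\eta_i')$'s respects the two admissibility conditions. For the sign condition on $\EE_0$, I would compare with the sign condition on $\EE''$ (which holds by \cite{At-socle}): the elements $([A,B]_\rho,l,\eta_i)$ and $([A+t_i,B+t_i]_\rho,l,\eta_i')$ appear in matched pairs in $\EE''$, so after cancellation the signs $\eta_i'$ surviving in $\EE_0$ combined with the signs already in $\EE$ should satisfy the required product relation. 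If the raw signs happen to fail the constraint, a short combinatorial check (using the freedom provided by pairs $([A,B]_\rho,l,\eta_i'),([A,B]_\rho,l,-\eta_i')$, which preserve the representation) should fix it.

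For the equality $\pi(\EE_0) = \sigma$, the plan is to shift $\EE_0$ into non-negative DDR form by choosing auxiliary parameters $s_i \gg 0$ and considering
\[
\EE_{0,\gg} = \EE_{\gg} \cup \{([A+s_i, B+s_i]_\rho, l, \eta_i')\}_{i=1}^k,
\]
so that $\pi(\EE_0)$ can be computed by the derivative formula from \cite{At-const}. Since $\sigma = \pi(\EE)$ satisfies $D_{\rho|\cdot|^x}(\sigma) = 0$ for $x > B+1$ and in particular $D_{\rho|\cdot|^{A+1}} \circ \dots \circ D_{\rho|\cdot|^{B+1}}(\sigma) = 0$, the extra Speh factors coming from the new elements commute through the inductive construction and get peeled off cleanly by the derivatives $D_{\rho|\cdot|^{B+s_i+j}, \dots, \rho|\cdot|^{A+s_i+j}}$, leaving $\pi(\EE)$ untouched. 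Matching this against $\sigma = \pi(\EE)$ should then yield $\pi(\EE_0) = \sigma$.

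The main obstacle is this last step: tracking the compatibility between the derivative formula defining $\pi(\EE_0)$ and the irreducible socle appearing in $\pi(\EE_{0,\gg})$ requires care, because the sign data $\eta_i'$ interact with the characters $\ep'$ inherited from $\EE$ through M{\oe}glin's construction. A secondary delicate point is ensuring that when $l < (A-B+1)/2$, so that the $([A,B]_\rho,l,\eta_i')$ elements contribute both to the Langlands multisegment part and to the tempered piece, the added contributions correctly match what was already present in $\EE$. In both places I expect to invoke the derivative/Jacquet identities from \cite{At-const, At-Is_Arthur} together with Lemma~\ref{geometric} applied inductively.
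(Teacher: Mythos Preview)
Your approach has a fundamental flaw: adjoining $k$ new extended segments to $\EE$ enlarges the underlying $A$-parameter to $\psi_{\EE_0} = \psi_\EE \oplus (\rho \boxtimes S_a \boxtimes S_b)^{\oplus k}$, and hence $\pi(\EE_0)$ is a representation of a strictly larger classical group than $\sigma$. The derivatives $D_{\rho|\cdot|^{B+1}, \dots, \rho|\cdot|^{A+1}} \circ \dots \circ D_{\rho|\cdot|^{B+s_i}, \dots, \rho|\cdot|^{A+s_i}}$ in the definition of $\pi(\EE_0)$ do not ``peel off'' the new Speh factors; by construction they merely shift each $([A+s_i,B+s_i]_\rho,l,\eta_i')$ back down to $([A,B]_\rho,l,\eta_i')$, so the $k$ extra segments still contribute. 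Thus $\pi(\EE_0) = \sigma$ is impossible in your setup unless $k=0$.

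The paper's argument is entirely different: it does not add segments but rather \emph{modifies} the existing extended multi-segment $\EE$ for $\sigma$ so as to produce $k$ segments of the form $([A,B]_\rho,*,*)$ at the top. Among all $\EE$ with $\pi(\EE)=\sigma$ satisfying the normalization from \cite[Theorem 4.1]{At-Is_Arthur}, one is chosen maximizing $X = \sum_{B'=B} A'$. If fewer than $k$ of the top $\rho$-segments have the shape $[A,B]_\rho$, Lemma~\ref{remaining} (which constrains the Langlands data of $\soc(S'\rtimes\pi)$) forces the presence of a pair $([A,B'_{m'}]_\rho,*,*),([A_1,B]_\rho,*,*)$ with $B'_{m'}<B$ and $A_1>A$ satisfying one of the conditions of \cite[Theorem 5.2]{At-const}. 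That theorem then allows swapping this pair for $([A_1,B'_{m'}]_\rho,*,*),([A,B]_\rho,*,*)$ without changing the representation, increasing the count of $[A,B]_\rho$-segments by one. Iterating yields $\EE_0$. The essential ingredient you are missing is precisely this use of Lemma~\ref{remaining} together with the segment-swapping operation of \cite[Theorem 5.2]{At-const}.
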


Admitting this proposition, 
by \cite[Algorithm 3.3]{At-Is_Arthur}, we conclude that $\pi = \soc(\tau \rtimes \sigma)$ is of Arthur type. 
In fact, $\pi$ has an extended multi-segment $\EE_1$ 
obtained from $\EE_0$ by replacing 
$([A,B]_\rho,l,\eta'_i)$ with $([A+1,B+1]_\rho,l,\eta'_i)$ for $i=1, \dots,k$.
This will complete the proof of Proposition \ref{typeA}.

\begin{proof}[Proof of Proposition \ref{changingE}]
First of all, by \cite[Theorem 4.1]{At-Is_Arthur}, 
there is an extended multi-segment $\EE$ with $\sigma = \pi(\EE)$
such that if $([A',B']_{\rho}, *, *) \in \EE$, then $B' \leq B$, or $B' = B+1$ and $A' \leq A$.
We choose such an $\EE$ so that 
the number 
\[
X = \sum_{\substack{([A',B']_\rho, l', \eta') \in \EE \\ B'=B}} A'
\]
is maximal. 
By changing the admissible order if necessary, 
we may assume that we can write
\[
\EE = \EE_1 \cup \{([A_i,B]_\rho, l_i, \eta_i)\}_{i=1,\dots,m} \cup \{([A_i',B+1]_\rho, l_i', \eta_i')\}_{i=1,\dots,m'}, 
\]
where 
\begin{itemize}
\item
$\{([A_i',B+1]_\rho, l_i', \eta_i')\}_{i=1,\dots,m'}$ 
is the multi-set contained $\EE$ consisting of $([A',B']_\rho, *, *)$ 
such that $B'=B+1$, hence $A_i' \leq A$; 
\item
$\{([A_i,B]_\rho, l_i, \eta_i)\}_{i=1,\dots,m}$ 
is the multi-set contained $\EE$ consisting of $([A',B']_\rho, *, *)$ 
such that $B' = B$ and $A' \geq A$; 
\item
$\EE_1$ is the multi-set contained $\EE$ consisting of other $([A',B']_{\rho'}, *, *)$; 
\item
the admissible order is so that 
\[
([A',B']_{\rho'}, *, *) < ([A_i,B]_\rho, l_i, \eta_i) < ([A_i',B+1]_\rho, l_i', \eta_i')
\]
for $([A',B']_{\rho'}, *, *) \in \EE_1$, 
and the canonical orders on $\{1, \dots, m\}$ and $\{1, \dots, m'\}$.
\end{itemize}
Note that $[A_i,B]_\rho \supset [A_j',B+1]_\rho$ for any $1 \leq i \leq m$ and $1 \leq j \leq m'$.
Hence by changing the admissible order and updating the notations, 
we can rewrite 
\[
\EE = \EE'_1 \cup \{([A,B_i']_\rho, l'_i, \eta'_i)\}_{i=1,\dots,m'} 
\cup \{([A_i,B]_\rho, l_i, \eta_i)\}_{i=1,\dots,m}
\]
such that 
\begin{itemize}
\item
$\{([A_i,B]_\rho, l_i, \eta_i)\}_{i=1,\dots,m}$ 
is the multi-set contained $\EE$ consisting of $([A',B']_\rho, *, *)$ 
such that $B' = B$ and $A' \geq A$; 
\item
$\{([A,B_i']_\rho, l'_i, \eta'_i)\}_{i=1,\dots,m'}$ 
is the multi-set contained $\EE$ consisting of $([A',B']_\rho, *, *)$ 
such that $B' < B$ and $A' = A$; 
\item
$\EE'_1$ is the multi-set contained $\EE$ consisting of other $([A',B']_{\rho'}, *, *)$; 
\item
the admissible order is so that 
\[
([A',B']_{\rho'}, *, *) < ([A,B_i']_\rho, l_i', \eta_i') < ([A_i,B]_\rho, l_i, \eta_i) 
\]
for $([A',B']_{\rho'}, *, *) \in \EE'_1$, 
and the canonical orders on $\{1, \dots, m\}$ and $\{1, \dots, m'\}$.
\end{itemize}
Note that $m$ and $m'$ can be equal to zero. 
\par

Let $m_0 = |\{i \in \{1, \dots, m\} \;|\; A_i=A\}|$ and suppose that $m_0 < k$. 
We may assume that $A < A_1 \leq \dots \leq A_{m-m_0}$ and $A_i=A$ for $m-m_0+1 \leq i \leq m$.
Since the number $X$ is maximum among $\EE'$ such that $\pi(\EE') = \sigma$, 
to hold Lemma \ref{remaining} for $t_1, \dots, t_k  \gg 0$, 
namely for $\mm_1$ and $(\phi_1, \ep|_{\phi_1})$ appearing in the Langlands data of 
$\soc(S' \rtimes \pi) = \soc(\soc(S' \times \tau) \rtimes \sigma)$,
we need to have $m>m_0$ and $m' > 0$.
Moreover, by the same argument as in \cite[Theorem 5.2]{At-const},
we see that $([A,B_{m'}]_\rho, l'_{m'}, \eta'_{m'})$ and $([A_1,B]_\rho, l_1, \eta_1)$
satisfy one of conditions in \cite[Theorem 5.2]{At-const}.
Hence by this theorem, we can change $\EE$ so that 
$\{([A,B_{m'}]_\rho, l'_{m'}, \eta'_{m'}), ([A_1,B]_\rho, l_1, \eta_1)\}$ is replaced with 
\[
\{([A_1,B_{m'}]_\rho, l', \eta'), ([A,B]_\rho, l, \eta)\}
\]
for some $(l,\eta)$ and $(l',\eta')$.
Hence we can replace $m_0$ with $m_0+1$. 
Repeating this argument, we can achieve $\EE_0$ as in the assertion. 
\end{proof}

This completes the proof of Proposition \ref{typeA}.
Now, Theorem \ref{inductive} follows from Lemma \ref{pi'} and Proposition \ref{typeA}.

\subsection{The initial step}
Let $\pi \in \Irr^G$ be of good parity. 
In this section, we will prove that
if 
\[
D_{\rho|\cdot|^x}(\pi) \not= 0
\implies x \in \left\{0,\half{1}\right\} 
\]
for any $\rho^\vee \cong \rho$,
then $\pi$ is of Arthur type.
Indeed, we will prove a slightly more general statement. 
To state it, we need to introduce some notation.
\par

Let $\pi \in \Irr^G$ be of good parity. 
Write 
\[
\pi = L(\Delta_{\rho_1}[x_1,-y_1], \dots, \Delta_{\rho_r}[x_r,-y_r]; \pi(\phi, \ep))
\]
as in the Langlands classification, 
where 
\[
\phi = \bigoplus_{j=1}^s \rho'_j \boxtimes S_{2z_j+1}
\]
is a tempered $L$-parameter such that $\rho_1, \dots, \rho_r, \rho_1', \dots, \rho_s' \in \Cusp^\GL$ are all self-dual. 
Fix $\rho \in \Cusp^\GL$ with $\rho \cong \rho^\vee$. 
We set 
\begin{align*}
L_{\rho|\cdot|^x}(\pi) = L_x(\pi) &= \{i \in \{1,\dots,r\} \;|\; \rho_i \cong \rho,\; x_i = x\}, \\
R_{\rho|\cdot|^x}(\pi) = R_x(\pi) &= \{i \in \{1,\dots,r\} \;|\; \rho_i \cong \rho,\; y_i = x\}, \\
T_{\rho|\cdot|^x}(\pi) = T_x(\pi) &= \{j \in \{1,\dots,s\} \;|\; \rho'_j \cong \rho,\; z_j = x\}
\end{align*}
and 
\[
A_{\rho|\cdot|^x}(\pi) = A_x(\pi) = L_x \sqcup R_x \sqcup T_x.
\]
We will prove the following.

\begin{thm}\label{initial}
Notations are as above. 
For each self-dual $\rho \in \Cusp^\GL$, 
denote $b_\rho \in \{0,\half{1}\}$ such that 
$\rho \boxtimes S_{2b_\rho+1}$ is self-dual of the same type as $\phi$.
Assume that there are half-integers $a_\rho, a_1,\dots,a_k,b_1,\dots,b_k$ 
(depending on $\rho$) such that 
\begin{itemize}
\item
they are all congruent to $b_\rho$ modulo $\Z$; 
\item
we have 
\[
b_\rho \leq a_\rho < b_k \leq a_k < b_{k-1} \leq a_{k-1} < \dots < b_1 \leq a_1;
\]
\item
if $D_{\rho|\cdot|^x}(\pi) = 0$, then $x \in \{b_\rho, b_k,\dots,b_1\}$; 
\item
if $A_x(\pi) \not= \emptyset$, 
then $b_\rho \leq x \leq a_\rho$ 
or $b_i \leq x \leq a_i$ for some $1 \leq i \leq k$; 
\item
if $b_i \leq x \leq a_i$ for some $1 \leq i \leq k$, 
then $|A_x(\pi)| = 1$, so that $A_x(\pi)$ is equal to exactly one of 
$L_x(\pi)$, $T_x(\pi)$ or $R_x(\pi)$; 
\item
for each $1 \leq i \leq k$,  
\[
|\{x \;|\; b_i \leq x \leq a_i,\; A_x(\pi) = L_x(\pi)\}|
= 
|\{x \;|\; b_i \leq x \leq a_i,\; A_x(\pi) = R_x(\pi)\}|.
\]
\end{itemize}
Then $\pi$ is of Arthur type.
\end{thm}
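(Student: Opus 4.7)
The plan is to exhibit an extended multi-segment $\EE$ with $\pi(\EE) = \pi$, which by \cite[Theorem 1.2]{At-const} places $\pi$ in the $A$-packet $\Pi_{\psi_\EE}$ and thus certifies that $\pi$ is of Arthur type. I would construct $\EE$ interval by interval, exploiting the combinatorial constraints in the hypothesis, and then verify the equality via M{\oe}glin's lift-and-descend procedure.

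For each upper interval $[b_i, a_i]$ with $i \in \{1, \dots, k\}$, the assumption $|A_x(\pi)| = 1$ together with the balance $|L \cap [b_i, a_i]| = |R \cap [b_i, a_i]|$ lets me parse the pattern from bottom to top as a disjoint concatenation of blocks of the form $L^l T^{b-2l} R^l$; each block yields one extended segment $([A,B]_\rho, l, \eta) \in \EE$ with sign $\eta$ chosen so that $\ep(\rho \boxtimes S_{2(B+l+j)+1}) = (-1)^j \eta$ matches $\ep$ on the tempered constituents at the T-positions of the block. For the bottom interval $[b_\rho, a_\rho]$, the derivative at $b_\rho$ is unconstrained, so I can use extended segments with $B = b_\rho$ to absorb the remaining Langlands segments and tempered data in this range. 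Taking the union with the admissible order induced by $[b_\rho, a_\rho] < [b_k, a_k] < \cdots < [b_1, a_1]$ gives a candidate $\EE$. To verify $\pi(\EE) = \pi$, I would lift $\EE$ to a non-negative DDR extended multi-segment $\EE_\gg$ by shifting each block upward, write $\pi(\EE_\gg)$ via M{\oe}glin's explicit formula as the socle of a parabolic induction from Speh-type representations against a tempered one, apply the prescribed iterated derivatives $D_{\rho|\cdot|^{B+1},\dots,\rho|\cdot|^{A+1}} \circ \cdots$ to descend to $\pi(\EE)$, and compare with $\pi$'s Langlands data. The sign condition $\prod_i (-1)^{[b_i/2] + l_i} \eta_i^{b_i} = 1$ should follow from the analogous constraint satisfied by $\ep$.

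The main obstacle is justifying that the greedy block-parsing of each upper interval actually reproduces $\pi$'s Langlands pairing of $L$'s with $R$'s. A priori, given balanced $L$'s and $R$'s in $[b_i, a_i]$, several pairings (nested, crossed, and mixtures) are combinatorially possible, but only the fully nested pairing arises from extended segments. I expect the nested pairing to be forced by the derivative-vanishing hypothesis: a crossed pairing would produce a Langlands segment whose left endpoint sits at some interior position $x \in (b_i, a_i)$, and an analysis via Tadi\'c's formula (Corollary \ref{TadicFormula}) would then show $D_{\rho|\cdot|^x}(\pi) \neq 0$, violating the assumption. Converting this heuristic into a rigorous statement, and carefully handling the interaction between multiple self-dual $\rho$'s as well as with the bottom interval $[b_\rho, a_\rho]$ (where the tempered character $\ep$ dictates the signs), constitutes the technical core of the proof.
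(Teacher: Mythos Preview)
Your plan for the upper intervals $[b_i,a_i]$ is sound and coincides with the paper's base case: each such interval contributes a single extended segment $([a_i,b_i]_\rho,l_i,\eta_i)$. The derivative-vanishing hypothesis together with \cite[Theorem~7.1]{AM} (rather than Tadi\'c's formula) forces the pattern to be exactly \emph{one} block $L^{l_i}T\cdots TR^{l_i}$ with the fully nested pairing: if $A_x=L_x$ at an interior $x$ then $A_{x-1}=L_{x-1}$, and dually for $R$; a non-nested pairing would produce Langlands segments $[x,-y]_\rho\subset[x',-(y+1)]_\rho$, forcing $D_{\rho|\cdot|^{y+1}}(\pi)\neq0$ at an interior point. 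So your ``main obstacle'' is real but tractable, and is not where the genuine difficulty lies.

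The gap is the bottom interval $[b_\rho,a_\rho]$, where $|A_x(\pi)|$ may exceed $1$. Your sentence ``use extended segments with $B=b_\rho$ to absorb the remaining data'' does not specify which $(A,l,\eta)$ to take or why the resulting $\pi(\EE)$ equals $\pi$, and there is no evident way to read off the correct layered structure of extended segments directly from the Langlands data of $\pi$. The paper does not attempt a direct construction here. Instead it argues by induction on $X(\pi)=\sum_\rho\sum_{b_\rho<x\leq a_\rho}|A_{\rho|\cdot|^x}(\pi)|$: one forms $\pi'$ as the socle of a shifted Speh representation times $\pi$, which peels off one layer of $[b_\rho,a_\rho]$ and installs it as a \emph{new} upper interval $[b_\rho+t,a_\rho+t]$ of multiplicity one, strictly decreasing $X$. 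That $\pi'$ again satisfies all the hypotheses --- in particular the balance $l_+=l_-$ on the new interval --- is the content of Lemmas~\ref{-(a+1)} and~\ref{l=l}, whose proofs require the non-obvious vanishing $D_{\rho|\cdot|^{-(a_\rho+1)}}(\pi')=0$ together with a careful analysis via \cite[Theorem~7.1, Corollary~7.2]{AM} of which Langlands segments of $\pi'$ can reach the endpoint $-(a_\rho+t)$. Only once $X=0$ does the direct construction apply. Your proposal handles that base case but omits the inductive mechanism that reduces to it; without an argument of comparable strength to these two lemmas, the proof is incomplete.
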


When the implication $D_{\rho|\cdot|^x}(\pi) \not= 0 \implies x \in \{0,\half{1}\}$ holds 
for any $\rho \cong \rho^\vee$, 
then $\pi$ satisfies the conditions of Theorem \ref{initial} with $k=0$, 
and hence $\pi$ is of Arthur type.
\par

We prove Theorem \ref{initial} by induction on 
\[
X(\pi) = \sum_\rho \sum_{b_\rho < x \leq a_\rho} |A_{\rho|\cdot|^x}(\pi)|.
\]
Suppose first that this number is zero.
Then there are $\eta_\rho, \eta_i \in \{\pm1\}$ (which depend on $\rho$)
such that 
if we set 
\[
\EE = \bigcup_\rho \left(
\{
\underbrace{([b_\rho,b_\rho]_\rho,0,\eta_\rho), \dots, ([b_\rho,b_\rho]_\rho,0,\eta_\rho)}_{|A_{b_\rho}(\pi)|}
\}
\cup 
\bigcup_{i=1}^k \{([a_i,b_i]_\rho,l_i,\eta_i)\}
\right)
\]
with 
\[
l_i = 
|\{x \;|\; b_i \leq x \leq a_i,\; A_x(\pi) = L_x(\pi)\}|
= 
|\{x \;|\; b_i \leq x \leq a_i,\; A_x(\pi) = R_x(\pi)\}|, 
\]
then we have $\pi(\EE) = \pi$.
See Section \ref{sec.Moe}.
Therefore, $\pi$ is of Arthur type. 
\par

Now, we suppose that $A_{a_\rho}(\pi) \not= \emptyset$ 
for some $\rho \cong \rho^\vee$ and $a_\rho > b_\rho$. 
We may assume that $b_k$ is big enough so that 
there is an integer $t > 0$ such that 
\[
a_\rho+1 < b_\rho+t \leq a_\rho+t < b_k-1.
\]
Here, when $k = 0$, we ignore the inequality $a_\rho+t < b_k-1$.
Consider
\[
\pi' = \soc\left(
\begin{pmatrix}
b_\rho+t & \ldots & a_\rho+t \\
\vdots & \ddots & \vdots \\
b_\rho+1 & \ldots & a_\rho+1
\end{pmatrix}_\rho
\rtimes \pi
\right)
\]
which is irreducible by \cite[Proposition 3.4]{At-socle}.
By Tadi\'c's formula (Corollary \ref{TadicFormula}), we see that
\[
D_{\rho|\cdot|^x}(\pi') \not= 0 
\implies x \in \{b_\rho, b_\rho+t, b_k,\dots,b_1, -(a_\rho+1)\}.
\]

\begin{lem}\label{-(a+1)}
In the setting above, we have
\[
D_{\rho|\cdot|^{-(a_\rho+1)}}(\pi') = 0.
\]
\end{lem}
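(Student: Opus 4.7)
The plan is to analyze $D_{\rho|\cdot|^{-(a_\rho+1)}}(\pi')$ by pulling the derivative through the embedding $\pi'\hookrightarrow S\rtimes\pi$ and then applying Tadi\'c's formula (Corollary~\ref{TadicFormula}) to $S\rtimes\pi$ with $m=1$. The exactness of the Jacquet functor gives
\[
[D_{\rho|\cdot|^{-(a_\rho+1)}}(\pi')]\leq[D_{\rho|\cdot|^{-(a_\rho+1)}}(S\rtimes\pi)],
\]
so it suffices to examine the possible contributions on the right. With $\tau_1\times\tau_4\times\tau_3^\vee=\rho|\cdot|^{-(a_\rho+1)}$ and $n_1+n_3+n_4=1$, there are three cases. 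The case $n_1=1$ would require $\rho|\cdot|^{-(a_\rho+1)}$ in the cuspidal support of $S$, but that support lies in $\{\rho|\cdot|^x:b_\rho+1\leq x\leq a_\rho+t\}$, which is entirely positive, so this case vanishes. The case $n_4=1$ would require $D_{\rho|\cdot|^{-(a_\rho+1)}}(\pi)\neq 0$, but by the standing hypothesis of Theorem~\ref{initial} the nonzero $\rho$-derivatives of $\pi$ occur only at $x\in\{b_\rho,b_1,\ldots,b_k\}$, all of which are non-negative, so this case also vanishes.

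Thus only the case $n_3=1$ can contribute, giving $\tau_3=\rho|\cdot|^{a_\rho+1}$, i.e.\ $\rho|\cdot|^{a_\rho+1}$ must come off the right of a Jacquet piece of $S$. Since $a_\rho+1$ is the bottom-right corner of the rectangular Speh-like representation $S$, the Jacquet module $\Jac_{\dim S-1,1}(S)$ does have such a piece $\tau_2\otimes\rho|\cdot|^{a_\rho+1}$ (the bottom of the rightmost column $\Delta_\rho[a_\rho+t,a_\rho+1]$), so the Case B contribution is a priori non-zero and yields terms of the form $\rho|\cdot|^{-(a_\rho+1)}\otimes(\tau_2\rtimes\pi)$. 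The heart of the proof is therefore to show that this contribution does \emph{not} come from the socle $\pi'$.

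To do this, suppose for contradiction that $D_{\rho|\cdot|^{-(a_\rho+1)}}(\pi')\neq 0$, so that $\pi'\hookrightarrow\rho|\cdot|^{-(a_\rho+1)}\rtimes\sigma'$ for some irreducible $\sigma'$, and combine this with the embedding $\pi'\hookrightarrow S\rtimes\pi$. Using Frobenius reciprocity and the Case B analysis above, $\sigma'$ must be a subquotient of $\tau_2\rtimes\pi$. I then plan to examine the Langlands data of $\pi'$ in two ways and derive a contradiction: on one hand, $\pi'=\soc(S\rtimes\pi)$ has Langlands data determined by $\pi$ together with the ``flipped'' rectangular pattern coming from $S$, and the gap inequalities $a_\rho+1<b_\rho+t\leq a_\rho+t<b_k-1$ ensure that the contribution of $S$ sits strictly to one side of both $\pi$'s data and of $-(a_\rho+1)$; on the other hand, an embedding $\pi'\hookrightarrow\rho|\cdot|^{-(a_\rho+1)}\rtimes\sigma'$ would force $-(a_\rho+1)$ to be the leading term of some segment in those Langlands data, contradicting the extremality enforced by the socle structure.

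The main obstacle will be making the Langlands-data argument in the last step precise. I expect the cleanest route is to invoke M{\oe}glin's construction recalled in Section~\ref{sec.Moe}: writing the $A$-parameter and the extended multi-segment that produce $\pi'$ explicitly from those of $\pi$ and from $S$, and then reading off the combinatorial condition for $D_{\rho|\cdot|^{-(a_\rho+1)}}$ to be nonzero. The key input is that the sequence $\{-(b_\rho+1),-(b_\rho+2),\ldots,-(a_\rho+1)\}$ of potential leading exponents arising from $S$ is ``blocked'' by the immediately larger exponents $-(b_\rho+j)$ already present in the data, so that $-(a_\rho+1)$, being the smallest in this chain, cannot be liberated as a leading exponent of $\pi'$. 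Alternatively, one can avoid the combinatorics by a direct Jacquet-module matching: showing that any irreducible $\sigma'\leq \tau_2\rtimes\pi$ for which $\rho|\cdot|^{-(a_\rho+1)}\rtimes\sigma'$ admits $\pi'$ as a sub would force a nonzero $\rho|\cdot|^{-(a_\rho+1)}$-Jacquet piece of $\pi$ itself, contradicting the hypothesis.
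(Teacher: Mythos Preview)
Your reduction via Tadi\'c's formula to the single case $n_3=1$ (i.e.\ $\tau_3=\rho|\cdot|^{a_\rho+1}$) is correct and matches the paper's implicit starting point. The gap is entirely in how you propose to dispose of that case. Both of your suggested routes fail:
\begin{itemize}
\item Invoking M{\oe}glin's construction to compute the Langlands data of $\pi'=\pi(\EE')$ is circular here: the whole purpose of Theorem~\ref{initial} is to prove that $\pi$ (and hence $\pi'$) is of Arthur type, so you cannot assume an extended multi-segment description of $\pi'$ exists.
\item The ``blocking'' heuristic and the claim that a sub $\pi'\hookrightarrow\rho|\cdot|^{-(a_\rho+1)}\rtimes\sigma'$ would force $D_{\rho|\cdot|^{-(a_\rho+1)}}(\pi)\neq0$ are not justified. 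The $n_3=1$ term in Tadi\'c's formula really is nonzero at the level of semisimplifications, so the issue is genuinely about socles, and nothing in your outline controls that.
\end{itemize}

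More fundamentally, your proposal never uses the standing hypothesis $A_{a_\rho}(\pi)\neq\emptyset$ (together with $A_{a_\rho+1}(\pi)=\emptyset$), and this is precisely what drives the paper's argument. The paper shows that if $D_{\rho|\cdot|^{-(a_\rho+1)}}(\pi')\neq0$, then the embedding of $\pi'$ into $S\rtimes\pi$, after peeling off the top rows of $S$ and the row $Z_\rho[b_\rho+1,a_\rho]$, factors through $\soc(\rho|\cdot|^{-(a_\rho+1)}\rtimes\pi)$ rather than through $\rho|\cdot|^{a_\rho+1}\rtimes\pi$. Recovering $\pi$ from $\pi'$ by the appropriate chain of derivatives in two different ways then yields
\[
\soc(\rho|\cdot|^{-(a_\rho+1)}\rtimes\pi)\cong\soc(\rho|\cdot|^{a_\rho+1}\rtimes\pi),
\]
i.e.\ $\rho|\cdot|^{a_\rho+1}\rtimes\pi$ is irreducible. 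This contradicts \cite[Corollary~7.2]{AM}, since $A_{a_\rho}(\pi)\neq\emptyset$ and $A_{a_\rho+1}(\pi)=\emptyset$ force reducibility. Your outline contains neither this factoring step nor the appeal to the reducibility criterion, and without them the argument does not close.
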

\begin{proof}
Suppose that $D_{\rho|\cdot|^{-(a_\rho+1)}}(\pi') \not= 0$.
Then the inclusion
\begin{align*}
\pi' \hookrightarrow 
\begin{pmatrix}
b_\rho+t & \ldots & a_\rho+t \\
\vdots & \ddots & \vdots \\
b_\rho+2 & \ldots & a_\rho+2
\end{pmatrix}_\rho
\times Z_\rho[b_\rho+1,a_\rho]
\times \rho|\cdot|^{a_\rho+1} \rtimes \pi
\end{align*}
factors through
\begin{align*}
\pi' \hookrightarrow 
\begin{pmatrix}
b_\rho+t & \ldots & a_\rho+t \\
\vdots & \ddots & \vdots \\
b_\rho+2 & \ldots & a_\rho+2
\end{pmatrix}_\rho
\times Z_\rho[b_\rho+1,a_\rho]
\rtimes \soc(\rho|\cdot|^{-(a_\rho+1)} \rtimes \pi).
\end{align*}
Hence
\begin{align*}
\pi = 
D_{\rho|\cdot|^{-(a_\rho+1)}}
\circ
D_{\rho|\cdot|^{b_\rho+1}, \dots, \rho|\cdot|^{a_\rho}}
\circ
\left(D_{\rho|\cdot|^{b_\rho+2}, \dots, \rho|\cdot|^{a_\rho+2}}
\circ \dots \circ 
D_{\rho|\cdot|^{b_\rho+t}, \dots, \rho|\cdot|^{a_\rho+t}}\right)(\pi').
\end{align*}
On the other hand, 
by definition of $\pi'$, 
we have
\begin{align*}
\pi = 
D_{\rho|\cdot|^{a_\rho+1}}
\circ
D_{\rho|\cdot|^{b_\rho+1}, \dots, \rho|\cdot|^{a_\rho}}
\circ
\left(D_{\rho|\cdot|^{b_\rho+2}, \dots, \rho|\cdot|^{a_\rho+2}}
\circ \dots \circ 
D_{\rho|\cdot|^{b_\rho+t}, \dots, \rho|\cdot|^{a_\rho+t}}\right)(\pi').
\end{align*}
These equations imply that 
\[
\soc(\rho|\cdot|^{-(a_\rho+1)} \rtimes \pi) \cong \soc(\rho|\cdot|^{a_\rho+1} \rtimes \pi), 
\]
which is equivalent to saying that $\rho|\cdot|^{a_\rho+1} \rtimes \pi$ is irreducible.
But since $A_{a_\rho}(\pi) \not= \emptyset$ and $A_{a_\rho+1}(\pi) = \emptyset$, 
this is impossible by \cite[Corollary 7.2]{AM}.
\end{proof}

Now we consider $A_x(\pi')$. 
Note that 
\[
|A_x(\pi')| = \left\{
\begin{aligned}
&|A_x(\pi)| -1 \iif b_\rho \leq x \leq a_\rho, \\
&1 \iif b_\rho+t \leq x \leq a_\rho+t, \\
&1 \iif b_i \leq x \leq a_i,\; 1 \leq i \leq k, \\
&0 \other.
\end{aligned}
\right. 
\]
In particular, for $b_\rho+t \leq x \leq a_\rho+t$, 
exactly one of 
\[
A_x(\pi') = L_x(\pi'), 
\quad 
A_x(\pi') = T_x(\pi'), 
\quad\text{or}\quad
A_x(\pi') = R_x(\pi')
\]
holds.
Moreover, since $D_{\rho|\cdot|^x}(\pi') = 0$ for $b_\rho+t < x \leq a_\rho+t$, 
by \cite[Theorem 7.1]{AM}, we see that 
\begin{itemize}
\item
if $x \geq b_\rho+t+1$ and $A_x(\pi') = L_x(\pi')$, then $A_{x-1}(\pi') = L_{x-1}(\pi')$; 
\item
if $x \leq a_\rho+t-1$ and $A_x(\pi') = R_x(\pi')$, then $A_{x+1}(\pi') = R_{x+1}(\pi')$.
\end{itemize}
Write
\begin{align*}
l_+ &= |\{b_\rho+t \leq x \leq a_\rho+t \;|\; A_x(\pi') = L_x(\pi')\}|, \\
l_- &= |\{b_\rho+t \leq x \leq a_\rho+t \;|\; A_x(\pi') = R_x(\pi')\}|.
\end{align*}
A key lemma is as follow.

\begin{lem}\label{l=l}
We have $l_+ = l_-$.
\end{lem}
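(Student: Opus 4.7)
The plan is to combine the shape result for the pattern on $[b_\rho+t, a_\rho+t]$, which is already available, with a self-duality argument for the $L$-parameter of $\pi'$. By the propagation result \cite[Theorem 7.1]{AM} and Lemma \ref{-(a+1)}, together with the fact that $|A_x(\pi')| = 1$ throughout the shifted range, the assignment $x \mapsto A_x(\pi')$ on $[b_\rho+t, a_\rho+t]$ necessarily has the shape $L^{l_+} T^m R^{l_-}$ (all $L$'s on the left, all $R$'s on the right). The task reduces to establishing the symmetry $l_+ = l_-$ of the two flanks.

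My approach would be to track the bad-parity part of the $L$-parameter $\phi_{\pi'}$ of $\pi'$. Since $\pi$ is of good parity while $S_{\text{shift}}$ is an essentially Speh representation of genuine positive exponent $(a_\rho+b_\rho+t+1)/2$, an application of Tadi\'c's formula (Corollary \ref{TadicFormula}) to the inclusion $\pi' \hookrightarrow S_{\text{shift}} \rtimes \pi$ shows that the bad-parity contributions to $\phi_{\pi'}$ can only come from $S_{\text{shift}}$. Because $\phi_{\pi'}$ is self-dual as an $L$-parameter for a classical group, its bad-parity part must equal the self-dual completion $\phi_{S_{\text{shift}}} \oplus \phi_{S_{\text{shift}}}^\vee$ (restricted to bad-parity summands).

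Next, I would read this bad-parity package off the Langlands data $\pi' = L(\mm'; \pi(\phi_{\pi',\temp}, \ep'))$. Each Langlands segment $\Delta_\rho[x, -y] \in \mm'$ with $\rho$-part in the shifted range contributes bad-parity $L$-parameter summands $\rho|\cdot|^{(x-y)/2}\boxtimes S_{x+y+1}$ together with its dual; L-positions and R-positions pair up segment by segment. Using the derivative constraint $D_{\rho|\cdot|^z}(\pi) = 0$ for $z \notin \{b_\rho, b_k, \dots, b_1\}$, Lemma \ref{-(a+1)}, and the large gap in the cuspidal support of $\pi$ between $a_\rho$ and $b_\rho+t$ (ensured by the choice $t \gg 0$), I would argue that for each L-position $x$ in the shifted range the corresponding right endpoint $y$ of the segment must also lie in $[b_\rho+t, a_\rho+t]$, and symmetrically for R-positions. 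In other words, the Langlands segments responsible for the bad-parity part of $\phi_{\pi'}$ are confined to a self-contained system within the shifted range.

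Once this confinement is established, the symmetry $\phi_{S_{\text{shift}}} \oplus \phi_{S_{\text{shift}}}^\vee \cong (\phi_{S_{\text{shift}}} \oplus \phi_{S_{\text{shift}}}^\vee)^\vee$ forces the L- and R-positions in the shifted range to occur in matched pairs, yielding $l_+ = l_-$. The main obstacle I anticipate is the no-leakage step: precisely controlling the right endpoints of the L-segments and the left endpoints of the R-segments via Jacquet module arguments. This is where the strict separation produced by the choice of $t$ and the explicit structure of the essentially Speh $S_{\text{shift}}$ will have to be exploited; the derivative vanishing conditions coming from the hypotheses of Theorem~\ref{initial}, together with \cite[Theorem 7.1]{AM}, should prevent any segment endpoint from escaping $[b_\rho+t, a_\rho+t]$.
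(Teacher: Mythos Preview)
Your central device is empty: $\pi'$ is of good parity. Its cuspidal support is contained in $\{\rho'|\cdot|^x : \rho'\cong(\rho')^\vee,\ x\equiv b_{\rho'}\bmod\Z\}$, since both $\pi$ and the shifted ladder $S_{\text{shift}}$ have this property (the columns of $S_{\text{shift}}$ are $\Delta_\rho[b_\rho+j+t,\,b_\rho+j+1]$ with $b_\rho+j+1\equiv b_\rho\bmod\Z$). Hence every summand of $\phi_{\pi'}$ is of good parity and $(\phi_{\pi'})_{\bad}=0$; your equation ``bad-parity part $=\phi_{S_{\text{shift}}}\oplus\phi_{S_{\text{shift}}}^\vee$ restricted to bad parity'' reads $0=0$. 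More broadly, self-duality of $\phi_{\pi'}$ is automatic for every $\pi'\in\Irr(G)$ and cannot by itself constrain how the Langlands segments of $\pi'$ are arranged; it certainly does not force $L$- and $R$-positions within a fixed window to pair off.

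The content of the lemma is precisely the ``no-leakage'' step you defer. The easy direction $l_+\le l_-$ follows from the map $x\mapsto y$ (for $\Delta_\rho[x,-y]\in\mm'$) together with the balance hypothesis on each $[b_i,a_i]$. For the reverse one must rule out that some segment $\Delta_\rho[x,-y]$ with $b_\rho+t\le y\le a_\rho+t$ has $x\le b_\rho$, and derivative vanishing on $(b_\rho,b_\rho+t)$ alone does not do this. The paper's argument hinges on Lemma~\ref{-(a+1)}: assuming $l_->0$ one gets $A_{a_\rho}(\pi)=R_{a_\rho}(\pi)$, whence $\pi\hookrightarrow\Delta_\rho[b_\rho,-a_\rho]^k\rtimes\tau$ with $A_{a_\rho}(\tau)=\emptyset$; if $\pi'$ still admitted an embedding $\pi'\hookrightarrow\Delta_\rho[b_\rho,-a_\rho]^k\rtimes\tau'$, the irreducibility of $\rho|\cdot|^{a_\rho+1}\rtimes\tau$ (\cite[Corollary 7.2]{AM}) would force $D_{\rho|\cdot|^{-(a_\rho+1)}}(\pi')\ne0$, contradicting Lemma~\ref{-(a+1)}. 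This pins the segment ending at $-(a_\rho+t)$ to start at $b_\rho+t$, and only then do the remaining $R$-positions fall to a containment argument via \cite[Theorem 7.1]{AM}. Your outline does not supply this mechanism.
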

\begin{proof}
First of all, we show that $l_+ \leq l_-$.
Indeed, for $x \geq b_\rho+t$, if $A_x(\pi') = L_x(\pi')$, then 
$\Delta_\rho[x,-y]$ appears in the Langlands data of $\pi'$ for some $y > x$. 
Hence $A_y(\pi') = R_y(\pi')$. 
The map $x \mapsto y$ induces an injection 
\[
\{x \geq b_\rho+t \;|\; A_x(\pi') = L_x(\pi')\}
\hookrightarrow 
\{y \geq b_\rho+t \;|\; A_y(\pi') = R_y(\pi')\}. 
\]
Since
\[
|\{x \;|\; b_i \leq x \leq a_i,\; A_x(\pi) = L_x(\pi)\}|
= 
|\{y \;|\; b_i \leq y \leq a_i,\; A_x(\pi) = R_x(\pi)\}|
\]
for $1 \leq i \leq k$ by assumption, 
we have $l_+ \leq l_-$.
In particular, if $l_- = 0$, then $l_+ = 0$ as well and we get $l_+ = l_-$.
\par

In the rest of the proof of this lemma, 
we assume that $l_- > 0$.
In particular, $A_{a_\rho+t}(\pi') = R_{a_\rho+t}(\pi')$. 
It implies by \cite[Corollary 7.2]{AM} that $L_{a_\rho}(\pi) = T_{a_\rho}(\pi) = \emptyset$.
Since $D_{\rho|\cdot|^x}(\pi) = 0$ for any $b_\rho < x \leq a_\rho$, 
we see that $\Delta_\rho[b_\rho,-a_\rho]$ appears in the Langlands data of $\pi$
with multiplicity exactly $k = |R_{a_\rho}(\pi)| = |A_{a_\rho}(\pi)|$. 
Namely, there is $\tau \in \Irr^G$ such that $\pi \hookrightarrow \Delta_\rho[b_\rho,-a_\rho]^k \rtimes \tau$
and such that $A_{a_\rho}(\tau) = \emptyset$.
\par

Now, for the sake of a contradiction, 
we assume that 
$\pi' \hookrightarrow \Delta_\rho[b_\rho,-a_\rho]^k \rtimes \tau'$ for some $\tau' \in \Irr^G$.
Then the inclusion 
\[
\pi' \hookrightarrow \begin{pmatrix}
b_\rho+t & \ldots & a_\rho+t \\
\vdots & \ddots & \vdots \\
b_\rho+1 & \ldots & a_\rho+1
\end{pmatrix}_\rho
\times \Delta_\rho[b_\rho,-a_\rho]^k \rtimes \tau
\]
must factor through
\begin{align*}
\pi' &\hookrightarrow 
\soc\left( \Delta_\rho[b_\rho,-a_\rho]^k \times
\begin{pmatrix}
b_\rho+t & \ldots & a_\rho+t \\
\vdots & \ddots & \vdots \\
b_\rho+1 & \ldots & a_\rho+1
\end{pmatrix}_\rho
\right) \rtimes \tau
\\
&\hookrightarrow 
\Delta_\rho[b_\rho,-a_\rho]^k \times
\begin{pmatrix}
b_\rho+t & \ldots & a_\rho+t \\
\vdots & \ddots & \vdots \\
b_\rho+1 & \ldots & a_\rho+1
\end{pmatrix}_\rho \rtimes \tau
\\&\hookrightarrow
\Delta_\rho[b_\rho,-a_\rho]^k \times
\begin{pmatrix}
b_\rho+t & \ldots & a_\rho+t \\
\vdots & \ddots & \vdots \\
b_\rho+2 & \ldots & a_\rho+2
\end{pmatrix}_\rho
\times Z_\rho[b_\rho+1,a_\rho] \times \rho|\cdot|^{a_\rho+1} \rtimes \tau.
\end{align*}
Since $A_{a_\rho}(\tau) = \emptyset$, by \cite[Corollary 7.2]{AM}, 
we see that $\rho|\cdot|^{a_\rho+1} \rtimes \tau$ is irreducible so that 
$\rho|\cdot|^{a_\rho+1} \rtimes \tau \cong \rho|\cdot|^{-(a_\rho+1)} \rtimes \tau$.
Then the above inclusion implies that $D_{\rho|\cdot|^{-(a_\rho+1)}}(\pi') \not= 0$. 
This contradicts Lemma \ref{-(a+1)}.
\par

By the last paragraph, 
one can see that the Langlands data of $\pi'$
contains $\Delta_\rho[x',-(a_\rho+t)]$ for some $b_\rho < x' < a_\rho+t$.
Then we see that $D_{\rho|\cdot|^{x'}}(\pi') \not= 0$. 
Hence we conclude that $x' = b_\rho+t$, i.e., 
the singleton $A_{b_\rho+t}(\pi')$ is equal to $L_{b_\rho+t}(\pi')$.
\par

If $A_y(\pi') = R_y(\pi')$ for some $b_\rho+t < y < a_\rho+t$, 
then the Langlands data of $\pi'$
contains $\Delta_\rho[x,-y]$ for some $b_\rho \leq x < y$.
We claim that $x > b_\rho+t$.
For the sake of a contradiction, 
we assume that there is $\Delta_\rho[x,-y]$ in the Langlands data of $\pi'$
such that $x \leq b_\rho+t$ and $b_\rho+t < y < a_\rho+t$.
We may assume that $y$ is maximum among this condition.
Then $\Delta_\rho[x',-(y+1)]$ is in the Langlands data of $\pi'$ for some $x' \geq b_\rho+t$.
Since the segment $[x,-y]_\rho$ is contained in $[x',-(y+1)]_\rho$, 
by \cite[Theorem 7.1]{AM}, we have $D_{\rho|\cdot|^{y+1}}(\pi') \not= 0$, 
which is a contradiction.
Therefore, for $b_\rho+t \leq y \leq a_\rho+t$ with $A_y(\pi') = R_y(\pi')$, 
if $\Delta_\rho[x,-y]$ is in the Langlands data of $\pi'$, 
then $b_\rho+t \leq x \leq a_\rho+t$. 
This gives the bijection 
\[
\{b_\rho+t \leq y \leq a_\rho+t \;|\; A_y(\pi') = R_y(\pi')\}
\rightarrow
\{b_\rho+t \leq x \leq a_\rho+t \;|\; A_x(\pi') = L_x(\pi')\}
\]
and we conclude that $l_+ = l_-$, as desired.
\end{proof}

Now we set $b_{k+1} = b_\rho+t$ and $a_{k+1} = a_\rho+t$.
Then $\pi'$ satisfies all of the assumptions in Theorem \ref{initial} 
for $a_\rho, a_1, \dots, a_{k+1}, b_1, \dots, b_{k+1}$. 
Moreover, since 
\[
\sum_\rho \sum_{b_\rho < x \leq a_\rho} |A_{\rho|\cdot|^x}(\pi)|
-
\sum_\rho \sum_{b_\rho < x \leq a_\rho} |A_{\rho|\cdot|^x}(\pi')|
= a_\rho-b_\rho > 0, 
\]
we can apply the inductive hypothesis to $\pi'$. 
Hence we can write $\pi' = \pi(\EE')$. 
Moreover, we may assume that 
\[
\EE' \ni ([a_\rho+t,b_\rho+t], l, \eta)
\]
for some $\eta \in \{\pm1\}$, where $l=l_+=l_-$ is defined in Lemma \ref{l=l}. 
Then by construction, 
with 
\[
\EE = (\EE' \setminus \{([a_\rho+t,b_\rho+t], l, \eta)\}) \cup \{([a_\rho,b_\rho], l, \eta)\}, 
\]
we have $\pi = \pi(\EE)$.
Therefore, we conclude that $\pi$ is of Arthur type.
This completes the proof of Theorem \ref{initial}.
\par

Now Theorem \ref{unitary_gp} follows from Theorems \ref{inductive} and \ref{initial}.

\section{Slightly beyond case}
In this section, we treat a case slightly beyond the good parity case, 
which may be important for global applications.
\par

\subsection{Statement}
We know that if $\psi \in \Psi(G)$, then all members in $\Pi_\psi$ are unitary. 
However, due to the lack of a proof of the Ramanujan conjecture, 
if $\Psi$ is a global $A$-parameter of a classical group $\mathbb{G}$ over a number field $\F$, 
and if $v$ is a place of $\F$ such that $\F_v = F$ and $G = \mathbb{G}(\F_v)$, 
then the localization $\psi = \Psi_v$ belongs to $\Psi^+(G)$ but may not lie in $\Psi(G)$. 
\par

By the weak Ramanujan bound, which is know, 
it takes the form 
\[
\psi = \psi_0 \oplus 
\bigoplus_{i=1}^r (\rho_i|\cdot|^{x_i} \oplus \rho_i^\vee|\cdot|^{-x_i}) \boxtimes S_{c_i} \boxtimes S_{d_i},
\]
where 
\begin{itemize}
\item
$\psi_0 = \psi_{\mathrm{good}}$; 
\item
$\rho_i \in \Cusp^\GL$ is unitary; 
\item
$0 \leq x_i < \tfrac{1}{2}$; 
\item
if $x_i = 0$, then $\rho_i \boxtimes S_{c_i} \boxtimes S_{d_i}$ is not self-dual of the same type as $\psi$.
\end{itemize}
Here, $r$ may be zero, in which case $\psi = \psi_{\mathrm{good}}$ is of good parity. 
For such a parameter $\psi$, 
its $A$-packet $\Pi_\psi$ is defined as the set of irreducible parabolic inductions
\[
\pi = \bigtimes_{i=1}^r \Sp(\rho_i, c_i,d_i)|\cdot|^{x_i} \rtimes \pi_0
\]
for $\pi_0 \in \Pi_{\psi_0}$.
\par

In this section, we prove the following theorem, which was conjectured in \cite[Conjecture 5.9]{HJLLZ}. 

\begin{thm}\label{weak}
Let 
\[
\pi = \bigtimes_{i=1}^r \Sp(\rho_i, c_i,d_i)|\cdot|^{x_i} \rtimes \pi_0
\]
where $\pi_0$ is of Arthur type of good parity, 
$\rho_i$ is unitary, and $0 \leq x_i < \tfrac{1}{2}$ for $1 \leq i \leq r$. 
Suppose that $\pi$ is irreducible. 
For each $\rho \in \Cusp^\GL$ and each pair of positive integers $(c,d)$, 
we set 
\[
I(\rho,c,d) = \{i \in \{1,\dots,r\} \;|\; \rho_i \cong \rho,\; (c_i,d_i) = (c,d)\}.
\]
Then $\pi$ is unitary if and only if 
for each $\rho$ and $(c,d)$, the following conditions hold:
\begin{itemize}
\item
If $\rho \not\cong \rho^\vee$, then 
\[
\{x_i \;|\; i \in I(\rho,c,d),\; x_i\not= 0\} = \{x_i \;|\; i \in I(\rho^\vee,c,d),\; x_i\not= 0\}
\]
as multi-sets.
\item
If $\rho \cong \rho^\vee$ and $|I(\rho,c,d)|$ is odd, then 
the unitary induction $\Sp(\rho, c,d) \rtimes \pi_0$ is irreducible. 
\end{itemize}
\end{thm}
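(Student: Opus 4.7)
The proof splits into necessity and sufficiency. For \emph{necessity}, I first observe that a unitary representation must be hermitian, so $\overline{\pi} \cong \pi^\vee$. Since $\pi_0$ is unitary and the MVW involution gives $\tau \rtimes \sigma \cong \tau^\vee \rtimes \sigma$ on classical groups (applied one factor at a time, which is legitimate once $\pi$ is irreducible), the hermitian condition on $\pi$ translates into equality of the multi-sets $\{\Sp(\rho_i,c_i,d_i)|\cdot|^{\pm x_i}\}_i$ and $\{\Sp(\rho_i^\vee,c_i,d_i)|\cdot|^{\pm x_i}\}_i$. Counting the copies of $\Sp(\rho,c,d)|\cdot|^x$ on both sides at a fixed $x>0$ yields exactly the first condition of the theorem. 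For the second (odd self-dual) condition I would argue by contradiction: assume $|I(\rho,c,d)|$ is odd with $\rho\cong\rho^\vee$ and $\Sp(\rho,c,d)\rtimes\pi_0$ reducible. After pairing and shrinking the remaining self-dual paired exponents to zero and commuting the non-self-dual paired factors out of the way, each $\Sp(\rho',c',d')^2\rtimes\pi_0$ is absorbed into an Arthur-type good-parity representation $\sigma$ sitting in the $A$-packet of $\psi_0$ enlarged by $2\rho'\boxtimes S_{c'}\boxtimes S_{d'}$. Applying (RP1) of Proposition \ref{list} to the resulting $\Sp(\rho,c,d)|\cdot|^s\rtimes\sigma$ (whose intertwining operator remains non-scalar by the reducibility assumption, which propagates from $\pi_0$ to $\sigma$ via Arthur's local intertwining relations) then yields non-unitarity for small $s>0$, contradicting the unitarity of $\pi$ via (CS).

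For \emph{sufficiency}, I assume both conditions and consider the continuous deformation
\[
\Pi_t = \bigtimes_{i=1}^r \Sp(\rho_i,c_i,d_i)|\cdot|^{t x_i} \rtimes \pi_0, \quad t\in[0,1],
\]
with $\Pi_1=\pi$. At $t=0$ the representation $\Pi_0$ lies in the $A$-packet of $\psi_0 \oplus \bigoplus_i (\rho_i\oplus\rho_i^\vee)\boxtimes S_{c_i}\boxtimes S_{d_i}\in\Psi(G)$, and is therefore unitary by \cite[Theorem 2.2.1]{Ar}. The first condition ensures $\Pi_t$ is hermitian throughout. Irreducibility of $\Pi_t$ for $t\in(0,1]$ follows, for non-self-dual paired factors, from Tadi\'c's classification of the unitary dual of $\GL$ (via MVW and the standard complementary-series strip $[0,1/2)$), and for self-dual singletons in the odd case is exactly the irreducibility hypothesis of the theorem. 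Invoking (CS) from Proposition \ref{list} then yields unitarity of $\pi$.

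The main obstacle is the \emph{irreducibility analysis} of $\Pi_t$ along the deformation: reducibility in parabolic inductions of classical groups is governed by delicate combinatorial conditions on Langlands data, and ruling out coincidences along the deformation demands a careful case-by-case analysis via Tadi\'c's formula (Corollary \ref{TadicFormula}) together with the explicit form of $\pi_0$'s $A$-parameter. A secondary delicate point, in the necessity direction, is the proper handling of the non-self-dual factors when reducing to the framework of (RP1), which formally requires $\pi_0$ of good parity; ensuring that the commutation and absorption steps preserve the non-scalarness of the intertwining operator in (RP1) is the heart of the argument, and is presumably where a refinement of (RP1) or (RP2) adapted to the slightly-beyond-good-parity setting has to be invoked.
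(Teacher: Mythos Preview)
Your outline follows the same skeleton as the paper's proof (hermitian reduction plus (CS)/(UR)/(UI)/(RP1)), but there are two genuine gaps.

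\textbf{Sufficiency.} The single deformation $\Pi_t=\bigtimes_i \Sp(\rho_i,c_i,d_i)|\cdot|^{tx_i}\rtimes\pi_0$ does not work as written, because $\Pi_0$ need not be irreducible. Take $r=2$, $\rho_1\cong\rho_2\cong\rho\cong\rho^\vee$, $(c_1,d_1)=(c_2,d_2)$, and $\Sp(\rho,c,d)\rtimes\pi_0$ reducible. Then $|I(\rho,c,d)|=2$ is even, so the hypotheses of the theorem are satisfied, yet $\Pi_0=\Sp(\rho,c,d)^2\rtimes\pi_0$ is reducible and (CS) does not apply at the endpoint. The paper avoids this by induction on $r$: for a self-dual pair with exponents $x_1\geq x_2$, it uses Tadi\'c's $\GL$ result that $\Sp(\rho,c,d)|\cdot|^{x_2}\times\Sp(\rho,c,d)|\cdot|^{-x_2}$ is unitary, applies (UI) to obtain unitarity of $\Sp(\rho,c,d)|\cdot|^{x_2}\times\Sp(\rho,c,d)|\cdot|^{x_2}\rtimes\pi'$, and then deforms only one exponent from $x_2$ to $x_1$ via (CS). Only after all pairs are stripped does one reduce to singletons with the irreducibility hypothesis; at that point Bo\v{s}njak--Stadler's irreducibility criterion guarantees $\Pi_{(0,\dots,0)}$ is irreducible.

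\textbf{Necessity.} Your reduction steps are correct, but after stripping pairs you are left not with a single leftover factor but with one self-dual singleton $\Sp(\rho_i,c_i,d_i)$ for \emph{each} triple $(\rho,c,d)$ with $\rho\cong\rho^\vee$ and odd $|I(\rho,c,d)|$. Absorbing the others into an Arthur-type $\sigma$ of good parity is not possible: they are of bad parity and $\Sp(\rho',c',d')^2\rtimes\pi_0$ is typically reducible, so there is no canonical $\sigma$ to pass to. The paper instead applies (RP1) with the full Levi $M$ carrying \emph{all} remaining Speh factors at once, and the substantive point is a separate Proposition showing that the normalized intertwining operator $R_P(w,\pi_M,\psi_M)$ is non-scalar. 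This is not a formal consequence of Arthur's intertwining relation; it requires the explicit combinatorics of Bo\v{s}njak--Stadler (their map $\FF^{-1}$ and the parametrization of the summands of $\Sp(\rho,c,d)\rtimes\pi'$) to exhibit two constituents on which the operator has opposite eigenvalues. Your ``propagation of reducibility from $\pi_0$ to $\sigma$ via Arthur's intertwining relations'' is exactly the step that needs this input and cannot be waved through.
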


For the proof, we use (UI), (UR), (CS) and (RP1) in Proposition \ref{list} frequently. 

\subsection{Proof of unitarity}
Here, we prove the ``if'' part of Theorem \ref{weak}.
Namely we show that if $\pi$ satisfies the conditions in Theorem \ref{weak}, 
then $\pi$ is unitary. 
The proof is by induction on $r$. 
\par

First, suppose that $x_i = 0$ for some $i$.
Since $\pi$ is irreducible, by \cite[Theorem 1.2]{BS}, we see that $\Sp(\rho_i,c_i,d_i) \rtimes \pi_0$ is also irreducible. 
Then since $\pi' = \times_{j \not= i} \Sp(\rho_j, c_j,d_j)|\cdot|^{x_j} \rtimes \pi_0$ is unitary by induction hypothesis, 
so is $\pi = \Sp(\rho_i,c_i,d_i) \rtimes \pi'$ by (UI).
In the rest of this subsection, we assume that $x_i > 0$ for any $i$.
\par

Fix $\rho$ unitary and a pair $(c,d)$. 
Suppose that $r \geq 2$ and that there are $i$ and $j$ with $i \not=j$ 
such that $\rho_i \cong \rho$, $\rho_j \cong \rho^\vee$ and $(c_i,d_i) = (c_j,d_j) = (c,d)$.
We may assume that $(i,j)=(1,2)$, $x_1 \geq x_2$ 
and that $x_1=x_2$ if $\rho \not\cong \rho^\vee$. 
By induction hypothesis, 
\[
\pi' = \bigtimes_{i=3}^r \Sp(\rho_i, c_i,d_i)|\cdot|^{x_i} \rtimes \pi_0
\]
is unitary. 
Since $0 < x_2 < \half{1}$, by \cite{T-0}, 
$\Sp(\rho,c,d)|\cdot|^{x_2} \times \Sp(\rho,c,d)|\cdot|^{-x_2}$ is unitary. 
Hence 
\[
\Sp(\rho,c,d)|\cdot|^{x_2} \times \Sp(\rho,c,d)|\cdot|^{-x_2} \rtimes \pi'
\cong 
\Sp(\rho,c,d)|\cdot|^{x_2} \times \Sp(\rho^\vee,c,d)|\cdot|^{x_2} \rtimes \pi'
\]
is an irreducible unitary representation by (UI).
In particular, if $\rho \not\cong \rho^\vee$, then $\pi$ is unitary. 
If $\rho \cong \rho^\vee$ and if we set 
\[
\Pi_t = \Sp(\rho,c,d)|\cdot|^{t} \times \Sp(\rho,c,d)|\cdot|^{x_2} \rtimes \pi', 
\]
then for $x_2 \leq t \leq x_1$, 
we see that $\Pi_t$ is irreducible hermitian representation since $\overline\rho \cong \rho^\vee \cong \rho$. 
Since $\Pi_{x_2}$ is unitary, by (CS), we see that $\pi = \Pi_{x_1}$ is also unitary. 
\par

Hence we may assume that $\rho_i \cong \rho_i^\vee$ for any $1 \leq i \leq r$, 
and that $I(\rho,c,d)$ is a singleton or empty for each $\rho \cong \rho^\vee$ and $(c,d)$. 
For $(t_1, \dots, t_r)$ with $0 \leq t_i \leq x_i$, we consider
\[
\Pi_{(t_1,\dots,t_r)} = \bigtimes_{i=1}^r \Sp(\rho_i, c_i,d_i)|\cdot|^{t_i} \rtimes \pi_0. 
\]
By assumption together with \cite[Theorem 1.2]{BS}, 
we see that $\Pi_{(0,\dots,0)}$ is irreducible. 
In particular, $\Pi_{(t_1,\dots,t_r)}$ is an irreducible hermitian representation 
for any $(t_1, \dots, t_r)$ with $0 \leq t_i \leq x_i$. 
Since $\Pi_{(0, \dots, 0)}$ is unitary, by (CS), 
we see that $\pi = \Pi_{(x_1, \dots,x_r)}$ is also unitary. 
This completes the proof of the ``if'' part of Theorem \ref{weak}. 

\subsection{Proof of non-unitarity}
In this subsection, we will prove the ``only if'' part by induction on $r$. 
Suppose that $\pi$ is unitary. 
Our goal is to show the conditions in Theorem \ref{weak}.
\par

First, we assume that $x_i = 0$ for some $i$. 
Put $\pi' = \times_{j \not= i} \Sp(\rho_j, c_j,d_j)|\cdot|^{x_j} \rtimes \pi_0$.
Then $\Sp(\rho_i,c_i,d_i) \boxtimes \pi'$ is an irreducible hermitian representation 
of a Levi subgroup of $G$ such that its induction $\Sp(\rho_i,c_i,d_i) \rtimes \pi'$ is irreducible and unitary. 
Hence by (UR), we see that $\Sp(\rho_i,c_i,d_i) \boxtimes \pi'$ is also unitary. 
In particular, $\pi'$ is unitary. 
Repeating this argument, we may assume that $x_i > 0$ for $1 \leq i \leq r$.
\par

Next, we assume that $\rho \not\cong \rho^\vee$.
Since $\pi$ is hermitian, we see that 
\begin{align*}
\overline{\pi} &\cong \bigtimes_{i=1}^r \Sp(\overline{\rho_i}, c_i,d_i)|\cdot|^{x_i} \rtimes \overline{\pi_0}
\cong \bigtimes_{i=1}^r \Sp(\rho_i^\vee, c_i,d_i)|\cdot|^{x_i} \rtimes \pi_0^\vee
\\&\cong  \left(\bigtimes_{i=1}^r \Sp(\rho_i, c_i,d_i)|\cdot|^{-x_i} \rtimes \pi_0\right)^\vee
\cong  \left(\bigtimes_{i=1}^r \Sp(\rho_i^\vee, c_i,d_i)|\cdot|^{x_i} \rtimes \pi_0\right)^\vee
\end{align*}
is isomorphic to $\pi^\vee$.
Hence $|I(\rho,c,d)| = |I(\rho^\vee, c,d)|$ and 
$\{x_i \;|\; i \in I(\rho,c,d)\} = \{x_i \;|\; i \in I(\rho^\vee,c,d)\}$ as multi-sets.
Moreover, if $\rho_1 \cong \rho$, $\rho_2 \cong \rho^\vee$, 
$(c_1,d_1) = (c_2,d_2) = (c,d)$ and $x_1=x_2$, 
then since 
\[
\pi \cong \Sp(\rho,d,c)|\cdot|^{x_2} \times \Sp(\rho,d,c)|\cdot|^{-x_2} \rtimes \pi'
\]
is an irreducible unitary representation with 
$\pi' = \times_{i=3}^r \Sp(\rho_i, c_i,d_i)|\cdot|^{x_i} \rtimes \pi_0$, 
by (UR), 
we see that 
\[
\left(\Sp(\rho,c,d)|\cdot|^{x_2} \times \Sp(\rho,c,d)|\cdot|^{-x_2}\right) \boxtimes \pi'
\]
is also unitary. 
In particular, $\pi'$ is unitary. 
Repeating this argument, we may assume that $\rho_i \cong \rho_i^\vee$ for $1 \leq i \leq r$.
\par

Suppose that $\rho \cong \rho^\vee$. 
If $\rho_1 \cong \rho_2 \cong \rho$ and $(c_1,d_1) = (c_2,d_2) = (c,d)$, 
then by (CS), we see that 
\[
\Sp(\rho,c,d)|\cdot|^{x_2} \times \Sp(\rho,c,d)|\cdot|^{x_2} \rtimes \pi'
\cong 
\Sp(\rho,c,d)|\cdot|^{x_2} \times \Sp(\rho,c,d)|\cdot|^{-x_2} \rtimes \pi'
\]
is an irreducible unitary representation with 
$\pi' = \times_{i=3}^r \Sp(\rho_i, c_i,d_i)|\cdot|^{x_i} \rtimes \pi_0$.
By (UR), we see that $\pi'$ is unitary. 
Repeating this argument, 
we may assume that $I(\rho,c,d)$ is at most a singleton for each $\rho \cong \rho^\vee$ and $(c,d)$.
\par

If $\Sp(\rho_1,c_1,d_1) \rtimes \pi_0$ is irreducible, 
then by \cite[Theorem 1.2]{BS}, we see that $\Sp(\rho_1,c_1,d_1) \rtimes \pi'$ is also irreducible 
with $\pi' = \times_{i=2}^r \Sp(\rho_i, c_i,d_i)|\cdot|^{x_i} \rtimes \pi_0$.
By (CS) and (UR), the unitarity of $\pi$ implies that $\pi'$ is also unitary. 
Repeating this argument, 
we may assume that $\Sp(\rho_i,c_i,d_i) \rtimes \pi_0$ is reducible for $1 \leq i \leq r$.
\par

We have proven that we may now assume the following:
\begin{enumerate}
\item
$\pi = \times_{i=1}^r \Sp(\rho_i, c_i,d_i)|\cdot|^{x_i} \rtimes \pi_0$ is an irreducible unitary representation; 
\item
$\rho_i \cong \rho_i^\vee$ and $0 < x_i < \half{1}$ for $1 \leq i \leq r$; 
\item
for $i \not= j$, we have $\rho_i \not\cong \rho_j$ or $(c_i,d_i) \not= (c_j,d_j)$; 
\item
$\Sp(\rho_i, c_i,d_i) \rtimes \pi_0$ is reducible for $1 \leq i \leq r$.
\end{enumerate}
The goal is to show that $r = 0$ in this case. 
For the sake of a contradiction, we assume that $r>0$.

\begin{prop}
Continue in the above setting. 
Let $P = MN$ be the maximal parabolic subgroup of $G$
such that $\pi_M = (\times_{i=1}^r \Sp(\rho_i, c_i,d_i)) \boxtimes \pi_0 \in \Irr(M)$. 
Then there is an $A$-parameter $\psi_M \in \Psi(M)$ 
with $\pi_M \in \Pi_{\psi_M}$ such that 
$R_P(w, \pi_M, \psi_M)$ is not a scalar operator, 
where $w$ is the unique non-trivial Weyl element in $W(M,G)$.
\end{prop}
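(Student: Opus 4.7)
This proposition supplies the hypothesis of (RP1) from Proposition~\ref{list}, which --- combined with a complementary-series deformation of $(x_1,\dots,x_r)$ toward $(s,\dots,s)$ for small $s>0$ --- will contradict the unitarity of $\pi$ and force $r=0$. The plan is to build $\psi_M$ from the Arthur parameters of $\tau$ and $\pi_0$, then factor $R_P(w,\pi_M,\psi_M)$ into $r$ individually non-scalar operators.

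First, fix $\psi_0\in\Psi(G_0)$ with $\pi_0\in\Pi_{\psi_0}$ (possible since $\pi_0$ is of Arthur type), and set $\psi_M=\psi_{\GL}\boxtimes\psi_0$ with $\psi_{\GL}=\bigoplus_{i=1}^{r}\rho_i\boxtimes S_{c_i}\boxtimes S_{d_i}$. Because the triples $(\rho_i,c_i,d_i)$ are pairwise distinct, $\tau_{\psi_{\GL}}=\tau$ is irreducible, and hence $\pi_M=\tau\boxtimes\pi_0\in\Pi_{\psi_M}$.

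Next, decompose $w\in W(M,G)$ through the larger Weyl group $W(M',G)$ associated to $M'=\GL_{m_1}(F)\times\cdots\times\GL_{m_r}(F)\times G_0$ as a composition of $r$ reflections $w_i$, each transporting $\Sp(\rho_i,c_i,d_i)$ across $G_0$, interlaced with $\GL$-reorderings. Since the $(\rho_i,c_i,d_i)$ are pairwise distinct, the intertwining operators coming from those $\GL$-reorderings are invertible scalars (by Zelevinsky's irreducibility criterion, the relevant products $\Sp(\rho_i,c_i,d_i)\times\Sp(\rho_j,c_j,d_j)$ are irreducible). By the multiplicativity of Arthur-normalized intertwining operators along this decomposition,
\[
R_P(w,\pi_M,\psi_M)\;=\;c\cdot\prod_{i=1}^{r}R_{P_i}\bigl(w_i,\Sp(\rho_i,c_i,d_i)\boxtimes\pi_0,\psi_{M_i}\bigr)
\]
for some $c\in\C^\times$, where $P_i=M_iN_i$ is the maximal parabolic of $G$ with Levi $M_i\cong\GL_{m_i}(F)\times G_0$ and $\psi_{M_i}=(\rho_i\boxtimes S_{c_i}\boxtimes S_{d_i})\boxtimes\psi_0$.

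Finally, each factor $R_{P_i}(w_i,\cdot,\psi_{M_i})$ is non-scalar: by our reduction $\Sp(\rho_i,c_i,d_i)\rtimes\pi_0$ is reducible, and Arthur's characterization via endoscopic character identities (\cite[\S 2.4]{Ar}) then forces the normalized self-intertwining operator to act by the two distinct scalars $+1$ and $-1$ on the two irreducible constituents. Hence $R_P(w,\pi_M,\psi_M)$ is non-scalar, as required. The main obstacle is the factorization step: rigorously showing that the $\GL$-swap contributions reduce to invertible scalars and that Arthur's normalizing factors $\lambda(w_i,\psi_{M_i})$ compose correctly so that the product formula above holds. This rests on the distinctness of the $(\rho_i,c_i,d_i)$, the irreducibility of $\Sp(\rho_i,c_i,d_i)\times\Sp(\rho_j,c_j,d_j)$ for $i\neq j$, and the standard cocycle/multiplicativity properties of normalized intertwining operators; once established, the conclusion of the proposition is immediate.
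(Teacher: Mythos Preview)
Your strategy has two substantial gaps.

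First, the displayed factorization does not type-check: each $R_{P_i}(w_i,\Sp(\rho_i,c_i,d_i)\boxtimes\pi_0,\psi_{M_i})$ is an operator on the smaller space $\Sp(\rho_i,c_i,d_i)\rtimes\pi_0$, not on $\tau\rtimes\pi_0$, so these cannot literally be multiplied together. The correct factorization of $R_P(w,\pi_M,\psi_M)$ along a reduced expression $w=w_r\cdots w_1$ is a composition of operators on the full induced space, and each factor moves $\Sp(\rho_i,c_i,d_i)$ across whatever lies to its right (the remaining $\Sp(\rho_j,c_j,d_j)$'s together with $\pi_0$), not across $\pi_0$ alone. Even granting that reinterpretation, knowing that each factor is individually non-scalar does \emph{not} imply that their composition is non-scalar: two commuting involutions on $\C^2$ with opposite $(\pm1)$-eigenspace decompositions multiply to $-\1$. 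You would have to exhibit a single irreducible constituent of $\tau\rtimes\pi_0$ on which the signs do not all cancel, and that is precisely the point at issue.

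Second, and more fundamentally, the assertion that reducibility of $\Sp(\rho_i,c_i,d_i)\rtimes\pi_0$ forces the normalized self-intertwining operator to act by distinct scalars is not justified for an \emph{arbitrary} choice of $\psi_0$ with $\pi_0\in\Pi_{\psi_0}$. Arthur's local intertwining relation identifies the eigenvalue on a constituent $\pi\in\Pi_\psi$ as $\prod_j\pair{e(\rho_j,c_j,d_j),\pi}_\psi$, but whether two distinct constituents take different values at this specific element of $A_\psi$ --- rather than differing elsewhere on $\AA_\psi$ --- depends on the particular $A$-parameter. This is exactly what the paper extracts from the Bo{\v s}njak--Stadler parametrization: one first passes to a suitable summand $\pi'$ of $\times_{j\neq i}\Sp(\rho_j,c_j,d_j)\rtimes\pi_0$, locates two explicit constituents $\pi_1,\pi_2$ of $\Sp(\rho_i,c_i,d_i)\rtimes\pi'$ lying in the \emph{same} packet $\Pi_\psi$ arising from a single $\psi_M$, and then checks by a direct case-by-case sign computation that
\[
\prod_{j=1}^{r}\frac{\pair{e(\rho_j,c_j,d_j),\pi_1}_\psi}{\pair{e(\rho_j,c_j,d_j),\pi_2}_\psi}=-1.
\]
Without this (or an equivalent explicit computation), the claim that the operator is non-scalar remains unproven.
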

\begin{proof}
By \cite[Corollary 3.33]{BS}, 
there exist $1 \leq i \leq r$ and an irreducible summand $\pi'$ of 
\[
\bigtimes_{\substack{1 \leq j \leq r \\ j \not= i}} \Sp(\rho_j, c_j,d_j) \rtimes \pi_0
\]
such that $\Sp(\rho_i, c_i,d_i)  \rtimes  \pi'$ is reducible. 
For simplicity, set $\rho = \rho_i$ and $(c,d) = (c_i,d_i)$. 
Since $\pi'$ is unitary and of good parity, we can write $\pi' = \pi(\EE')$ 
for some extended multi-segment $\EE'$. 
In \cite[Definition 3.4]{BS}, Bo{\v s}njak--Stadler introduced a modification of $\EE'$ given by 
\[
\FF^{-1} \colon
\EE' = \bigcup_{\rho'}\{([A_i,B_i]_{\rho'},l_i,\eta_i)\}_{i \in I_{\rho'}} \mapsto 
\Sc' = \bigcup_{\rho'}\{([A_i,B_i]_{\rho'},\mu_i)\}_{i \in I_{\rho'}}
\]
with 
\[
\mu_i = \left(\prod_{j=1}^{i-1}(-1)^{A_j-B_j}\right) \eta_i (A_i-B_i+1-2l_i).
\]
We may assume that $\Sc'$ is standard in the sense of \cite[Definition 3.15]{BS}.
Then by \cite[Corollary 3.30]{BS}, 
there are two irreducible summands $\pi_1$ and $\pi_2$ of $\Sp(\rho, c,d) \rtimes \pi'$
associated to $\Sc'_1 = \Sc \cup \{([C,D]_\rho,\nu), ([C,D]_\rho,\nu)\}$ and 
$\Sc'_2 = \Sc \cup \{([C,D]_\rho,\nu+2), ([C,D]_\rho,\nu+2)\}$, respectively, 
for some $\nu \in \Z$ with $\nu \equiv d \bmod 2$, 
where $C = \half{c+d}-1$ and $D = \half{c-d}$.
Note that $\Sc'_1$ and $\Sc'_2$ give the same $A$-parameter $\psi \in \Psi(G)$ 
as in \cite[Definition 3.3(d)]{BS}, which satisfies that $\pi_1,\pi_2 \in \Pi_\psi$. 
Moreover, we may assume that 
$\psi$ comes from an $A$-parameter $\psi_M \in \Psi(M)$ so that $\pi_M \in \Pi_{\psi_M}$.
That is, 
\[
\psi = \left(\bigoplus_{j=1}^r \rho_j \boxtimes S_{c_j} \boxtimes S_{d_j}\right)^{\oplus2} \oplus \psi_0
\]
with $\pi_0 \in \Pi_{\psi_0}$. 
By the description of $\FF$ in \cite[Definition 3.4]{BS} together with \cite[Theorem 3.6]{At-const} 
(corrected in \cite[Appendix A]{At-Is_Arthur}), 
we see that
\[
\prod_{j=1}^r \frac{\pair{e(\rho_j,c_j,d_j), \pi_1}_\psi}{\pair{e(\rho_j,c_j,d_j), \pi_2}_\psi}
= (-1)^{\half{|\nu+2|-|\nu|}} \left((-1)^{d-1}\frac{\sgn(\nu)}{\sgn(\nu+2)}\right)^{d}
\]
with the convention $\sgn(0) = 1$.
A case-by-case computation shows that the right-hand side is always equal to $-1$.
\par

Now by the local intertwining relation proven by Arthur \cite[Theorem 2.4.1]{Ar}, 
the normalized local intertwining operator $R_P(w,\pi_M,\psi_M)$ acts on $\pi_k$ by 
the scalar $\prod_{j=1}^r \pair{e(\rho_j,c_j,d_j), \pi_k}_\psi$ for $k=1,2$.
Therefore, the eigenvalue of $R_P(w,\pi_M,\psi_M)$ on $\pi_1$ differs from the one on $\pi_2$.
\end{proof}

In particular, (RP1) in Proposition \ref{list} implies that for sufficiently small $s > 0$, 
the irreducible representation $\times_{i=1}^r \Sp(\rho_i, c_i,d_i)|\cdot|^{s} \rtimes \pi_0$ is not unitary. 
However, (CS) together with the unitarity of $\pi$ would imply that this representation should be unitary.
This is a contradiction.
This completes the proof of the ``only if'' part of Theorem \ref{weak}. 

\section{Remarks on the further beyond}\label{sec.beyond}
We have described the unitary dual of split $\SO_{2n+1}(F)$ and $\Sp_{2n}(F)$ 
in the good parity case and a slightly beyond. 
In the further beyond case, several new phenomena arise, which we now explain in this section.

\subsection{One-parameter complementary series}
The first step beyond the good parity case is to consider the set of representations of the form 
\[
\Pi_s = \Sp(\rho,a,b)|\cdot|^s \rtimes \pi_0,
\]
where 
\begin{itemize}
\item $\pi_0$ is of Arthur type and of good parity; 
\item $\rho \in \Cusp^\GL$ with $\rho^\vee \cong \rho$; 
\item $s > 0$.
\end{itemize}
To study this, it is important to consider 
the \emph{first reducibility point}  
\[
\FRP(\Sp(\rho,a,b), \pi_0) = \inf\left\{s \geq 0 \;\middle|\; 
\text{$\Sp(\rho,a,b)|\cdot|^s \rtimes \pi_0$ is reducible}\right\}.
\]
It is a half-integer and can be computed algorithmically (\cite[Theorem 5.3, Corollary 5.4]{At-socle}).
If $s \equiv \FRP(\Sp(\rho,a,b), \pi_0) \bmod \Z$, 
then all irreducible subquotients of $\Pi_s$ are of good parity. 
We need to consider the other case. 

\begin{qu}
Let $\pi_0 \in \Irr^G$ be of good parity and of Arthur type. 
For $s > 0$ with $s \not\equiv \FRP(\Sp(\rho,a,b), \pi_0) \bmod \Z$, 
if the irreducible representation $\Sp(\rho, a,b)|\cdot|^s \rtimes \pi_0$ is unitary, 
then must it be that $s < \FRP(\Sp(\rho,a,b), \pi_0)$?
\end{qu}

The converse follows from (CS).
Note that if $s$ is larger than $\FRP(\Sp(\rho,a,b), \pi_0)$ but close to it, 
the non-unitarity would follow if one could establish an analytic property of intertwining operators 
(see \cite[(RP), Section 2]{MT}).
\par

To deal with cases where $s$ is much larger than $\FRP(\Sp(\rho,a,b), \pi_0)$, 
we need more explicit control over $\FRP(\Sp(\rho,a,b), \pi_0)$ than is currently available in \cite{At-socle}. 
The difficulty is that $\FRP(\Sp(\rho,a,b), \pi_0)$ really depends on the pair $(a,b)$, 
unlike in the earlier settings treated in \cite{LMT} and \cite{MT}.

\subsection{Some examples}
Finally, we give some explicit examples of unitary representations, beyond the good parity case.
In this subsection, we set $\rho = \1_{\GL_1(F)}$, and we drop $\rho$ from the notations.
\par

Let $\sigma$ be the unique irreducible supercuspidal representation 
in the $A$-packet $\Pi_\psi$ associated to $\psi = S_5 \boxtimes S_5 \in \Psi(\Sp_{24}(F))$.
Then $\FRP(\Sp(2,2), \sigma) = 4$.
Fix $0 < \epsilon < \half{1}$.
Since the induced representation 
\[
\Sp(2,2)|\cdot|^{-s} \rtimes \sigma
\]
is irreducible for $0 \leq s \leq 2+\epsilon$, 
we see that $\Sp(2,2)|\cdot|^{2+\epsilon} \rtimes \sigma$ is unitary. 
We consider 
\[
\Pi_s = \Sp(2,2)|\cdot|^{-(2+\epsilon)} \times \Sp(2,2)|\cdot|^{-s} \rtimes \sigma.
\]
It is irreducible for $0 \leq s < \epsilon$.
Therefore, all irreducible subquotients of $\Pi_\epsilon$ are unitary. 
Note that $\Sp(2,2)|\cdot|^{-(2+\epsilon)} \times \Sp(2,2)|\cdot|^{-\epsilon}$ 
contains $L(\mm)|\cdot|^{-(1+\epsilon)}$ as an irreducible subquotient, 
where $\mm$ is Leclerc's example (see Remark \ref{Max}) given by 
\[
\mm = [-1,-2]+ [0,0]+ [1,-1]+ [2,1].
\]
Hence
\[
L(\mm)|\cdot|^{1+\epsilon} \rtimes \sigma = 
L(\Delta[-2-\epsilon,-3-\epsilon], |\cdot|^{-1-\epsilon}, \Delta[-\epsilon,-2-\epsilon], 
\Delta[\epsilon,\epsilon-1]; \sigma)
\]
is unitary. 
However, it is difficult to see from the Langlands data alone why the right-hand side is unitary. 
This suggests that further notions may be necessary to fully describe the unitary dual of classical groups.
\par

Additional difficulties arise when considering the limit $\epsilon \to \frac{1}{2}$. 
Since all irreducible subquotients of the complementary series are unitary, 
we conclude that
\[
\pi = L\left(\Delta\left[-\tfrac{5}{2},-\tfrac{7}{2}\right], |\cdot|^{-\frac{3}{2}}, 
\Delta\left[-\tfrac{1}{2},-\tfrac{5}{2}\right], \Delta\left[\tfrac{1}{2},-\tfrac{1}{2}\right]; \sigma\right)
\]
is also unitary. 
\par

We see from the above argument that $\pi$ is not isolated in the unitary dual. 
However, we do not know how to prove this fact directly from the Langlands data, 
since the induced representation
\[
\Pi = L\left(\Delta\left[-\tfrac{5}{2},-\tfrac{7}{2}\right], |\cdot|^{-\frac{3}{2}}, 
\Delta\left[-\tfrac{1}{2},-\tfrac{5}{2}\right], \Delta\left[\tfrac{1}{2},-\tfrac{1}{2}\right]\right) \rtimes \sigma
\]
is reducible. 
(To see that, one can check that $\Pi$ is preserved under Aubert duality, whereas $\pi$ is not.)



\begin{thebibliography}{50}
\bibitem{ALTV} 
{J. Adams, M. van Leeuwen, P. E. Trapa and D. Vogan}, 
{\em Unitary representations of real reductive groups}. 
\emph{Ast\'erisque} No. {\bf417} (2020), x + 174 pp.

\bibitem{Ar}
{J. Arthur}, 
{\em The endoscopic classification of representations. Orthogonal and symplectic groups}. 
\emph{American Mathematical Society Colloquium Publications,} {\bf61}. 
American Mathematical Society, Providence, RI, 2013. xviii+590 pp.

\bibitem{At-Jac}
{H. Atobe}, 
{\em Jacquet modules and local Langlands correspondence}. 
\emph{Invent. Math.} {\bf219} (2020), no.~3, 831--871.

\bibitem{At-const}
{H. Atobe}, 
{\em Construction of local $A$-packets}. 
\emph{J. Reine Angew. Math.} {\bf790} (2022), 1--51.

\bibitem{At-socle}
{H. Atobe}, 
{\em On the socles of certain parabolically induced representations of $p$-adic classical groups}. 
\emph{Represent. Theory} {\bf26} (2022), 515--541.

\bibitem{At-Is_Arthur}
{H. Atobe}, 
{\em The set of local $A$-packets containing a given representation}.
\emph{J. Reine Angew. Math.} {\bf804} (2023), 263--286.

\bibitem{AG}
{H. Atobe and W. T. Gan}, 
{\em Local theta correspondence of tempered representations and Langlands parameters}. 
\emph{Invent. Math.} {\bf210} (2017), no.~2, 341--415.

\bibitem{AGIKMS}
{H. Atobe, W. T. Gan, A. Ichino, T. Kaletha, A. M{\'i}nguez and S. W. Shin},
{\em Local Intertwining Relations and Co-tempered $A$-packets of Classical Groups}.
Preprint 2024, arXiv:2410.13504v1.

\bibitem{AM}
{H. Atobe and A. M\'inguez}, 
{\em The explicit Zelevinsky--Aubert duality}.
\emph{Compos. Math.} {\bf159} (2023), no.~2, 380--418.

\bibitem{Au}
{A.-M. Aubert}, 
{\em Dualit\'{e} dans le groupe de Grothendieck de la cat{\'e}gorie des repr{\'e}sentations lisses de longueur finie d'un groupe r{\'e}ductif $p$-adique}. 
\emph{Trans. Amer. Math. Soc.} {\bf347} (1995), no.~6, 2179--2189.
Erratum:
\emph{Trans. Amer. Math. Soc.} {\bf348} (1996), no.~11, 4687--4690.

\bibitem{Bad} 
{A. I. Badulescu}, 
{\em Correspondance de Jacquet--Langlands pour les corps locaux de caract\'eristique non nulle}. 
\emph{Ann. Sci. \'Ecole Norm. Sup. (4)} {\bf35} (2002), no.~5, 695--747.

\bibitem{BHLS} 
{A. I. Badulescu, G. Henniart, B. Lemaire and V. S\'echerre}, 
{\em Sur le dual unitaire de $\GL_r(D)$}. 
\emph{Amer. J. Math.} {\bf132} (2010), no.~5, 1365--1396.

\bibitem{BR} 
{A. I. Badulescu and D. A. Renard}, 
{\em Sur une conjecture de Tadi\'c}. 
\emph{Glas. Mat. Ser. III 39(59)} (2004), no.~1, 49--54.

\bibitem{BM} 
{D. Barbasch and A. Moy}, 
{\em A unitarity criterion for $p$-adic groups}. 
\emph{Invent. Math.} {\bf98} (1989), no.~1, 19--37. 

\bibitem{Ber} 
{I. N. Bernstein}, 
{\em $P$-invariant distributions on $\GL(N)$ and the classification of unitary representations of $\GL(N)$ (non-Archimedean case)}. 
\emph{Lie group representations, II (College Park, Md., 1982/1983)}, 50--102, 
Lecture Notes in Math., {\bf1041}, \emph{Springer, Berlin}, 1984.

\bibitem{BZ}
{I. N. Bernstein and A. V. Zelevinsky}, 
{\em Induced representations of reductive $\mathfrak{p}$-adic groups. I}. 
\emph{Ann. Sci. \'Ecole Norm. Sup. (4)} {\bf10} (1977), no.~4, 441--472.

\bibitem{BS}
{B. Bo{\v s}njak and A. Stadler}, 
{\em Irreducibility of the parabolic induction of essentially Speh representations and a representation of Arthur type over a $p$-adic field}. 
Preprint 2025, arXiv:2504.01226v2.

\bibitem{BK} 
{C. J. Bushnell and P.C. Kutzko}, 
{\em The admissible dual of $\GL(N)$ via compact open subgroups}. 
\emph{Annals of Mathematics Studies,} {\bf129}. 
Princeton University Press, Princeton, NJ, 1993. xii+313 pp. 

\bibitem{DKV} 
{P. Deligne, D. Kazhdan and M.-F. Vign\'eras}, 
{\em Repr\'esentations des alg\`ebres centrales simples $p$-adiques}. 
\emph{Representations of reductive groups over a local field,} 33--117, 
Travaux en Cours, \emph{Hermann, Paris}, 1984.

\bibitem{HT}
{M. Harris and R. Taylor}, 
{\em The geometry and cohomology of some simple Shimura varieties}. 
\emph{Annals of Mathematics Studies,} {\bf151}. 
Princeton University Press, Princeton, NJ, 2001. viii+276 pp.

\bibitem{HJLLZ} 
{A. Hazeltine, D. Jiang, B. Liu, C.-H. Lo and Q. Zhang}, 
{\em On representations of Arthur type and unitary dual for classical groups}. 
Preprint 2024, arXiv:2410.11806v1.

\bibitem{HLL} 
{A. Hazeltine, B. Liu and C.-H. Lo}, 
{\em On the intersection of local Arthur packets for classical groups and applications}.
Preprint 2022, arXiv:2201.10539v3.

\bibitem{He}
{G. Henniart},
{\em Une preuve simple des conjectures de Langlands pour $\GL(n)$ sur un corps $p$-adique}. 
\emph{Invent. Math.} {\bf139} (2000), no.~2, 439--455.

\bibitem{KL}
{A. Kret and E. Lapid}, 
{\em Jacquet modules of ladder representations}. 
\emph{C. R. Math. Acad. Sci. Paris} {\bf350} (2012), no.~21-22, 937--940.

\bibitem{LM}
{E. Lapid and A. M\'inguez}, 
{\em On parabolic induction on inner forms of the general linear group over a non-archimedean local field}.
\emph{Selecta Math. (N.S.)} {\bf22} (2016), no.~4, 2347--2400.

\bibitem{LM2}
{E. Lapid and A. M\'inguez}, 
{\em Geometric conditions for $\square$-irreducibility of certain representations of the general linear group over a non-archimedean local field}. 
\emph{Adv. Math.} {\bf339} (2018), 113--190.

\bibitem{LMT}
{E. Lapid, G. Mui\'c and M. Tadi\'c}, 
{\em On the generic unitary dual of quasisplit classical groups}. 
\emph{Int. Math. Res. Not.} 2004, no.~26, 1335--1354. 

\bibitem{Lec} 
{B. Leclerc}, 
{\em Imaginary vectors in the dual canonical basis of $U_q(\mathfrak{n})$}. 
\emph{Transform. Groups} {\bf8} (2003), no.~1, 95--104.

\bibitem{Moe1}
{C. M{\oe}glin}, 
{\em Paquets d'Arthur pour les groupes classiques; point de vue combinatoire}.
Preprint 2006, arXiv:math/0610189v1.

\bibitem{Moe2}
{C. M{\oe}glin}, 
{\em Sur certains paquets d'Arthur et involution d'Aubert--Schneider--Stuhler g\'en\'eralis\'ee}. 
\emph{Represent. Theory} {\bf10} (2006), 86--129.

\bibitem{Moe3}
{C. M{\oe}glin}, 
{\em Paquets d'Arthur discrets pour un groupe classique $p$-adique}. 
\emph{Automorphic forms and $L$-functions II. Local aspects}, 179--257,
Contemp. Math., 489, Israel Math. Conf. Proc., 
\emph{Amer. Math. Soc., Providence, RI}, 2009.

\bibitem{Moe4}
{C. M{\oe}glin}, 
{\em Holomorphie des op\'erateurs d'entrelacement normalis\'es \`a l'aide des param\`etres d'Arthur}.
\emph{Canad. J. Math.} {\bf62} (2010), no.~6, 1340--1386.

\bibitem{Moe5}
{C. M{\oe}glin}, 
{\em Multiplicit\'e $1$ dans les paquets d'Arthur aux places $p$-adiques}. 
\emph{On certain $L$-functions}, 333--374,
Clay Math.~Proc., {\bf13}, 
\emph{Amer. Math. Soc., Providence, RI}, 2011.

\bibitem{MT}
{G. Mui\'c and M. Tadi\'c}, 
{\em Unramified unitary duals for split classical $p$-adic groups; 
the topology and isolated representations}. 
\emph{On certain L-functions}, 375--438, 
Clay Math. Proc., {\bf13}, 
\emph{Amer. Math. Soc., Providence, RI,} 2011.

\bibitem{Sec2} 
{V. S\'echerre}, 
{\em Repr\'esentations lisses de $\GL(m,D)$. I. Caract\`eres simples}. 
\emph{Bull. Soc. Math. France} {\bf132} (2004), no.~3, 327--396.

\bibitem{Sec3} 
{V. S\'echerre}, 
{\em Repr\'esentations lisses de $\GL(m,D)$. II. $\beta$-extensions}. 
\emph{Compos. Math.} {\bf141} (2005), no.~6, 1531--1550.

\bibitem{Sec4} 
{V. S\'echerre}, 
{\em Repr\'esentations lisses de $\GL_m(D)$. III. Types simples}. 
\emph{Ann. Sci. \'Ecole Norm. Sup. (4)} {\bf38} (2005), no.~6, 951--977.

\bibitem{Sec} 
{V. S\'echerre}, 
{\em Proof of the Tadi\'c conjecture (U0) on the unitary dual of $\GL_m(D)$}. 
\emph{J. Reine Angew. Math.} {\bf626} (2009), 187--203. 

\bibitem{SS} 
{V. S\'echerre and S. Stevens}, 
{\em Repr\'esentations lisses de $\GL_m(D)$. IV. Repr\'esentations supercuspidales}. 
\emph{J. Inst. Math. Jussieu} {\bf7} (2008), no.~3, 527--574.

\bibitem{T-0} 
{M. Tadi\'c}, 
{\em Classification of unitary representations in irreducible representations of general linear group (non-Archimedean case).} 
\emph{Ann. Sci. \'Ecole Norm. Sup. (4)} {\bf19} (1986), no.~3, 335--382. 

\bibitem{T-1} 
{M. Tadi\'c}, 
{\em Induced representations of $\GL(n,A)$ for $p$-adic division algebras $A$}. 
\emph{J. Reine Angew. Math.} {\bf405} (1990), 48--77.

\bibitem{T-ext}
{M. Tadi\'c}, 
{\em An external approach to unitary representations}.
\emph{Bull. Amer. Math. Soc. (N.S.)} {\bf28} (1993), no.~2, 215--252.

\bibitem{T-str}
{M. Tadi\'c}, 
{\em Structure arising from induction and Jacquet modules of representations of classical $p$-adic groups}. 
\emph{J. Algebra} {\bf177} (1995), no.~1, 1--33. 

\bibitem{T-Arthur}
{M. Tadi\'c}, 
{\em On unitarizability and Arthur packets}. 
\emph{Manuscripta Math.} {\bf169} (2022), no.~3-4, 327--367.

\bibitem{T-rank3}
{M. Tadi\'c}, 
{\em Unitarizability in corank three for classical $p$-adic groups}. 
\emph{Mem. Amer. Math. Soc.} {\bf286} (2023), no.~1421, vii+120 pp.

\bibitem{V} 
{D.A. Vogan}, 
{\em The unitary dual of $\GL(n)$ over an Archimedean field}. 
\emph{Invent. Math.} {\bf83} (1986), no.~3, 449--505.

\bibitem{X2}
{B. Xu}, 
{\em On M{\oe}glin's parametrization of Arthur packets for $p$-adic quasisplit $\Sp(N)$ and $\SO(N)$}. 
\emph{Canad. J. Math.} {\bf69} (2017), no.~4, 890--960.

\bibitem{Z}
{A. V. Zelevinsky}, 
{\em Induced representations of reductive $\mathfrak{p}$-adic groups. II. On irreducible representations of $\GL(n)$}.  
\emph{Ann. Sci. {\'E}cole Norm. Sup.} (4) {\bf13} (1980), no.~2, 165--210.

\end{thebibliography}
\end{document}